\newcommand{\nc}{\newcommand}
\newcommand{\rnc}{\renewcommand}
\nc{\les}{\lesssim}
\nc{\nit}{\noindent}
\nc{\nn}{\nonumber}
\nc{\D}{\partial}
\nc{\p}{\partial}
\nc{\diff}[2]{\frac{d #1}{d #2}}
\nc{\diffn}[3]{\frac{d^{#3} #1}{d {#2}^{#3}}}
\nc{\pdiff}[2]{\frac{\partial #1}{\partial #2}}
\nc{\pdiffn}[3]{\frac{\partial^{#3} #1}{\partial{#2}^{#3}}}
\nc{\abs}[1]{\left|#1\right|}
\nc{\cAc}{{\cal A}_c}
\nc{\cE}{{\cal E}}
\nc{\cF}{{\cal F}}
\nc{\cP}{{\cal P}}
\nc{\cV}{{\cal V}}
\nc{\cQ}{{\cal Q}}
\nc{\cGin}{{\cal G}_{\rm in}}
\nc{\cGout}{{\cal G}_{\rm out}}
\nc{\cO}{{\cal O}}
\nc{\Lav}{{\cal L}_{\rm av}}
\nc{\cL}{{\cal L}}
\nc{\cB}{{\cal B}}
\nc{\cZ}{{\cal Z}}
\nc{\cR}{{\cal R}}
\nc{\cT}{{\cal T}}
\nc{\cY}{{\cal Y}}
\nc{\cX}{{\cal X}}
\nc{\cXT}{{{\cal X}(T)}}
\nc{\cBT}{{{\cal B}(T)}}
\nc{\vD}{{\vec \mathcal{D}}}
\nc{\efield}{\mathcal{E}}
\nc{\vE}{{\vec \efield}}
\nc{\vB}{{\vec \mathcal{B}}}
\nc{\vH}{{\vec \mathcal{H}}}
\nc{\mR}{\mathcal{R}}
\nc{\mG}{\mathcal{G}}
\nc{\mE}{\mathcal{E}}
\nc{\ty}{{\tilde y}}
\nc{\tu}{{\tilde u}}
\nc{\tV}{{\tilde V}}
\nc{\Pc}{{\bf P_c}}
\nc{\bx}{{\bf x}}
\nc{\bX}{{\bf X}}
\nc{\bXYZ}{{\bf XYZ}}
\nc{\bY}{{\bf Y}}
\nc{\bF}{{\bf F}}
\nc{\bS}{{\bf S}}
\nc{\dV}{{\delta V}}
\nc{\dE}{{\delta E}}
\nc{\TT}{{\Theta}}
\nc{\dPsi}{{\delta\Psi}}
\nc{\order}{{\cal O}}
\nc{\Rout}{R_{\rm out}}
\nc{\eplus}{e_+}
\nc{\eminus}{e_-}
\nc{\epm}{e_\pm}
\nc{\eps}{\varepsilon}
\nc{\vnabla}{{\vec\nabla}}
\nc{\G}{\Gamma}
\nc{\w}{\omega}
\nc{\mh}{h}
\nc{\mg}{g}
\nc{\vphi}{\varphi}
\nc{\tlambda}{\tilde\lambda}
\nc{\be}{\begin{equation}}
\nc{\ee}{\end{equation}}
\nc{\ba}{\begin{eqnarray}}
\nc{\ea}{\end{eqnarray}}
\rnc{\k}{\kappa}
\rnc{\d}{\delta}
\rnc{\l}{\lambda}
\rnc{\rq}{{\rho_q}}
\nc{\dist}{\text{dist}}
\nc{\g}{\gamma}
\nc{\ol}{\overline}
\nc{\n}{\nu}
\newtheorem{theorem}{Theorem}[section]
\newtheorem{lemma}[theorem]{Lemma}
\newtheorem{prop}[theorem]{Proposition}
\newtheorem{corollary}[theorem]{Corollary}
\newtheorem{defin}[theorem]{Definition}
\newtheorem{rmk}[theorem]{Remark}
\nc{\pT}{\partial_T}
\nc{\pz}{\partial_z}
\nc{\pt}{\partial_t}
\nc{\la}{\langle}
\nc{\ra}{\rangle}
\nc{\infint}{\int_{-\infty}^{\infty}}
\nc{\halfwidth}{6.5cm}
\nc{\figwidth}{10cm}
\nc{\f}{\frac}
\nc{\comment}[1]{{\color{red}\sl$\langle$#1$\rangle$}}
\nc{\nlayers}{L} \nc{\nsectors}{M}
\nc{\indicator}{\mathbf{1}}
\nc{\Rhole}{R_{\rm hole}}
\nc{\Rring}{R_{\rm ring}}
\nc{\neff}{n_{\rm eff}}
\nc{\Frem}{F_{\rm rem}}
\nc{\R}{\mathbb R}
\nc{\C}{\mathbb C}
\nc{\Z}{\mathbb Z}
\nc{\N}{\mathbb N}
\nc{\DD}{\Delta}
\nc{\cD}{\mathcal D}
\nc{\lnorm}{\left\|}
\nc{\rnorm}{\right\|}
\nc{\rnormp}{\right\|_{\ell^{p,\eps}}}
\nc{\rar}{\rightarrow}
\nc{\mf}[1]{\mathfrak{#1}}
\nc{\mc}[1]{\mathcal{#1}}
\rnc{\t}[1]{\text{#1}}
\nc{\ang}[1]{\left\langle#1\right\rangle}
\nc{\set}[1]{\left\{#1\right\}}
\DeclareMathOperator{\Ker}{ker}
\begin{document}

\begin{abstract}
	We investigate dispersive estimates for the massless three dimensional Dirac  equation with a potential.  In particular, we show that the Dirac evolution satisfies a $\la t\ra^{-1}$ decay rate as an operator from $L^1$ to $L^\infty$ regardless of the existence of zero energy eigenfunctions.  We also show this decay rate may be improved to $\la t\ra ^{-1-\gamma}$ for any $0\leq \gamma < 1/2$ at the cost of spatial weights.  This estimate, along with the $L^2$ conservation law allows one to deduce a family of Strichartz estimates in the case of a threshold eigenvalue.  We classify the structure of threshold obstructions as being composed of zero energy eigenfunctions. Finally, we show the Dirac evolution is bounded for all time with minimal requirements on the decay of the potential and smoothness of initial data. 
\end{abstract}

\title[The Massless Dirac Equation in Three Dimensions]{\textit{The Massless Dirac Equation in Three Dimensions: Dispersive estimates and zero energy obstructions}}

\author[W.~R. Green, C.~Lane, B.~Lyons, S.~Ravishankar, A.~Shaw]{William~R. Green, Connor Lane, Benjamin Lyons, Shyam Ravishankar, Aden Shaw}
\thanks{The first author was partially supported by Simons Foundation Grant 511825.}
\address{Department of Mathematics\\
Rose-Hulman Institute of Technology \\
Terre Haute, IN 47803, U.S.A.}
\email{green@rose-hulman.edu, lanecf@rose-hulman.edu, lyonsba1@rose-hulman.edu, ravishs@rose-hulman.edu, shawap@rose-hulman.edu}

\address{231 West 18th Avenue\\
	Columbus, OH 43210-1174 }
\email{shaw.1287@osu.edu}
\subjclass{35Q41, 35L40, 47B15}

\maketitle
\section{Introduction}
We consider the linear Dirac equation with a potential:
\begin{align*}
	i\partial_t \psi(x, t) = (D_m + V(x))\psi(x, t), \qquad
	\psi(x, 0) = \psi_0(x).
\end{align*}
Here, $x \in \mathbb R^3$ is the spatial variable and $\psi(x,t) \in \mathbb C^{4}$. The free Dirac operator
$D_m$ is defined by
\begin{align*}
	D_m = -i \alpha \cdot \nabla + m \beta = -i \sum_{k = 1}^{3} \alpha_k \partial_{k} + m \beta,
\end{align*}
where $m\geq0$ is a constant, and the $4\times 4$ Hermitian matrices $\alpha_0 \coloneqq \beta$ and $\alpha_j$ satisfy
\begin{align}\label{eqn:anticomm}
	\alpha_j \alpha_k + \alpha_k\alpha_j 
	= 2\delta_{jk} \1_{\mathbb C^{4}}, 
	\qquad 
	\t{for all } j, k \in \{0, 1, 2, 3\}.
\end{align}
We consider the massless case, when $m = 0$, which may be used to model the dynamics of massless Fermions.  The Dirac equation more generally is a model for relativistic dynamics of quantum particles.  We refer to the short introductory article, \cite{EGTwhat}, or the monograph of Thaller, \cite{Thaller}, for more detailed introductions to the Dirac equation. For concreteness, in three dimensions we use
\begin{align*}
	\beta = \left[\begin{array}{cc} I_{\mathbb{C}^2} & 0\\ 0 & -I_{\mathbb{C}^2}
	\end{array}\right], \ \alpha_i=\left[\begin{array}{cc} 0 & \sigma_i \\ \sigma_i & 0
	\end{array}\right],
\end{align*}
\begin{align*}
	\sigma_1=\left[\begin{array}{cc} 0 & -i\\ i & 0
	\end{array}\right],\
	\sigma_2=\left[\begin{array}{cc} 0 & 1 \\ 1 & 0
	\end{array}\right],\
	\sigma_3=\left[\begin{array}{cc} 1 & 0 \\ 0 & -1
	\end{array}\right].
\end{align*}

The following identity,\footnote{When discussing  scalar operators such as $-\Delta+m^2-\lambda^2$ in the context of the Dirac equation they are to be understood as matrix-valued, that is as $(-\Delta+m^2-\lambda^2)\mathbbm 1_{\mathbb C^{4}}$.} which follows from   \eqref{eqn:anticomm},
\be  \label{dirac_schro_free}
(D_m-\lambda \mathbbm 1)(D_m+\lambda \mathbbm 1) =(-i \alpha \cdot \nabla +m\beta -\lambda \mathbbm 1)
(-i\alpha\cdot \nabla+m\beta+\lambda \mathbbm 1)   =(-\Delta+m^2-\lambda^2) 
\ee
allows us to formally define the free Dirac resolvent
operator $\mathcal R_0(\lambda)=(D_m-\lambda)^{-1}$ in terms of the
free resolvent $R_0(\lambda)=(-\Delta-\lambda)^{-1}$ of  the Schr\"{o}dinger operator for $\lambda$ in the resolvent set:
\begin{align}\label{eqn:resolvdef}
	\mathcal R_0(\lambda)=(D_m+\lambda) R_0(\lambda^2-m^2).
\end{align}
For our purposes, when $m=0$, we have
\begin{align*}
	\mathcal R_0(\lambda)=(-i\alpha \cdot \nabla+\lambda) R_0(\lambda^2)= (D_0+\lambda)R_0(\lambda^2).
\end{align*}

To state our main theorem, we introduce the following notation.  We let $\la x\ra=(1+|x|^2)^{1/2}$, let $a-$ denote $a-\epsilon$ for an arbitrarily small, but fixed $\epsilon>0$.  Similarly, $a+=a+\epsilon$.  We write $A\les B$ if there is an absolute constant $C>0$ so that $A\leq CB$.  For matrix-valued functions if $|V_{ij}(x)|\les \la x\ra^{-\delta}$ for every entry, we write $|V(x)|\les \la x\ra^{-\delta}$.  We define $\chi(\lambda)$ to be a smooth even cut-off to a sufficiently small neighborhood of zero.  Similarly, any function space $X$ used in the paper denotes the space of $\mathbb C^4$-valued functions with all entries in $X$.  That is, by $f\in L^1(\R^3)$ we mean $f(x)=(f_j(x))_{j=1}^4$ with each component an $L^1$ function. We define the polynomially weighted spaces $L^{p,\sigma}=\{f : \la\, \cdot\, \ra^\sigma f\in L^p\}$. We call the threshold zero energy regular if there are no zero-energy eigenfunctions of the Dirac operator $\mathcal H \coloneqq D_0 + V$.
Our main results control the evolution of the Dirac operator.
\begin{theorem}\label{thm:main}
	Assume that $V$ is self-adjoint and $|V(x)|\les \la x\ra^{-\delta}$.
	\begin{enumerate}[i)]
		\item Assume that zero is regular, for fixed $0\leq \gamma \leq 1$, if $\delta>3+2\gamma$ we have
		$$
		\|e^{-it\mathcal H}\chi(\mathcal H)\|_{L^{1,\gamma}\to L^{\infty,-\gamma}} \les \la t\ra^{-1-\gamma}.
		$$
		\item If zero is not regular, then for fixed $0\leq \gamma < 1/2$, if $\delta>3+4\gamma$ we have
		$$
		\|e^{-it\mathcal H}\chi(\mathcal H)\|_{L^{1,\gamma}\to L^{\infty,-\gamma}} \les \la t\ra^{-1-\gamma}.
		$$
		
	\end{enumerate}

\end{theorem}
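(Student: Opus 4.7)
The plan is to use Stone's formula for the spectral projection, writing the low-energy part of the evolution as
\be
e^{-it\mathcal H}\chi(\mathcal H) = \frac{1}{2\pi i}\int_{\R} e^{-it\lambda}\chi(\lambda)\bigl[\mathcal R_V^+(\lambda) - \mathcal R_V^-(\lambda)\bigr]\,d\lambda,
\ee
where $\mathcal R_V^\pm(\lambda) = ((D_0+V)-(\lambda\pm i0))^{-1}$ are the boundary values of the perturbed resolvent.  Splitting the self-adjoint potential as $V = U v^2$ with $v = |V|^{1/2}$ and $U$ a sign, one applies the symmetric resolvent identity
\be
\mathcal R_V^\pm(\lambda) = \mathcal R_0^\pm(\lambda) - \mathcal R_0^\pm(\lambda)\,v\,M^\pm(\lambda)^{-1}\,v\,\mathcal R_0^\pm(\lambda),\qquad M^\pm(\lambda) := U + v\,\mathcal R_0^\pm(\lambda)\,v,
\ee
to reduce matters to inverting the Birman--Schwinger operator $M^\pm(\lambda)$ on $vL^2$.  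From \eqref{eqn:resolvdef} and the explicit 3D Schr\"odinger kernel $R_0^\pm(\lambda^2)(x,y) = e^{\pm i\lambda|x-y|}/(4\pi|x-y|)$ one writes $\mathcal R_0^\pm(\lambda)$ in closed form and verifies that the spectral density $\mathcal R_0^+(\lambda) - \mathcal R_0^-(\lambda)$ vanishes to order $\lambda^2$ at the origin, supplying the baseline smallness at low energy.

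For part (i), the regularity hypothesis means $M^\pm(0)$ is invertible on $vL^2$, so $M^\pm(\lambda)^{-1}$ admits a convergent Neumann-type expansion $\sum_k \lambda^k A_k^\pm$ near $\lambda=0$ whose operator coefficients inherit the decay of $V$.  Substitution into Stone's formula yields a finite collection of oscillatory integrals schematically of the form
\be
\int_\R e^{-it\lambda}\,\chi(\lambda)\,e^{\pm i\lambda(|x-x_1|+\cdots+|x_n-y|)}\,b(\lambda,x,x_1,\ldots,x_n,y)\,d\lambda,
\ee
with amplitudes that are smooth in $\lambda$ and carry polynomially decaying spatial envelopes.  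Integration by parts in $\lambda$ gains a factor of $t^{-1}$ per derivative at the cost of the linear spatial factor $|x-x_1|+\cdots+|x_n-y|$; the outermost factors $|x|, |y|$ are absorbed by the weights $\la x\ra^{-\gamma}, \la y\ra^{-\gamma}$, while the interior factors $|x_j|$ are absorbed into the decay of $V$ provided $\delta > 3 + 2\gamma$.  Non-integer $\gamma$ is then handled either by interpolation or a fractional integration by parts between the trivial unweighted $\la t\ra^{-1}$ bound (which must be proved beforehand) and the weighted one-derivative estimate.

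The main difficulty is part (ii).  When zero is not regular, the orthogonal projection $S_0$ onto $\Ker M^\pm(0)$ is nontrivial, and a Feshbach/Jensen--Nenciu inversion applied to $M^\pm(\lambda) = M^\pm(0) + \lambda M_1^\pm(\lambda) + \cdots$ produces an expansion
\be
M^\pm(\lambda)^{-1} = \lambda^{-1}\Gamma_{-1}^\pm + \Gamma_0^\pm(\lambda),
\ee
in which the finite-rank singular coefficient $\Gamma_{-1}^\pm$ has range explicitly constructed from the zero energy eigenfunctions of $\mathcal H$ (yielding the classification of threshold obstructions announced in the abstract) and $\Gamma_0^\pm(\lambda)$ is locally H\"older in $\lambda$.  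The $\lambda^{-1}$ singularity is absorbed by the $\lambda^2$ vanishing of the free spectral density, but each subsequent integration by parts must also differentiate the eigenfunction kernels, which decay only polynomially; the resulting spatial factors can be tamed by the weights $\la x\ra^\gamma, \la y\ra^\gamma$ together with the stronger decay $\delta > 3+4\gamma$ of $V$ precisely when $\gamma < 1/2$.  This last step is the true bottleneck: it is what forces the upper bound on $\gamma$ in (ii) and also dictates the enhanced decay assumption on the potential.
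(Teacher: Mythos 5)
Your architecture matches the paper's: Stone's formula, the symmetric resolvent identity with $M^\pm(\lambda)=U+v\mR_0^\pm(\lambda)v^*$, explicit low-energy expansions of the free Dirac resolvent, and a Jensen--Nenciu inversion giving $(M^\pm)^{-1}(\lambda)=-D_2/\lambda+O(1)$ in the non-regular case, with the $\lambda^{-1}$ singularity absorbed by the order-$\lambda^2$ vanishing of $\mR_0^+-\mR_0^-$. (Note that this absorption is only available because the singular coefficient $D_2=(S_1v\mG_1v^*S_1)^{-1}$ is the \emph{same} for the $+$ and $-$ resolvents; your notation $\Gamma_{-1}^\pm$ leaves this essential structural fact unaddressed.) However, the step that actually produces the rate $\la t\ra^{-1-\gamma}$ is not established as you describe it. Interpolating between the unweighted one-integration-by-parts bound and a weighted two-integration-by-parts bound fails under your hypotheses: the $t^{-2}$ endpoint requires two $\lambda$-derivatives of $(M^\pm)^{-1}$, hence of $v\mE_1^\pm v^*$, whose Hilbert--Schmidt bound needs $\delta>5$, exceeding the claimed $\delta>3+2\gamma\leq 5$. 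The paper avoids this by integrating by parts exactly once and then exploiting $\gamma$-H\"older continuity of the resulting integrand via the shift $\lambda\mapsto\lambda-\pi/t$ (its Lemma 2.1), which costs only $\delta>3+2\gamma$; if your ``fractional integration by parts'' means this, you must prove the H\"older bounds for $\partial_\lambda(M^\pm)^{-1}$ and for the iterated resolvents. Relatedly, the symmetric resolvent identity must be iterated so that two free resolvents flank $(M^\pm)^{-1}$: with only one, $v\,\mR_0^\pm(\lambda)(\cdot,x)$ is not locally $L^2$ (the kernel behaves like $|x-y|^{-2}$ in $\R^3$), so you cannot pair it with the $L^2$-bounded operator $(M^\pm)^{-1}$.

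Your diagnosis of the $\gamma<1/2$ bottleneck in (ii) is also not correct. The finite-rank singular coefficient is $\lambda$-independent, so nothing about ``differentiating the eigenfunction kernels'' enters, and the polynomial decay of the eigenfunctions is not the obstruction. The restriction comes from the H\"older estimates on the error term in the expansion of $(M^\pm)^{-1}$: using a second-order expansion of $\mR_0^\pm$ with auxiliary parameter $0\leq\ell\leq1$, the error's derivative satisfies a $\gamma$-H\"older bound with the factor $|\lambda_1|^{\ell(1-\gamma)-\gamma-1}$, and integrability near $\lambda=0$ forces $\ell(1-\gamma)-\gamma>0$ with $\ell\leq1$, i.e.\ $\gamma<1/2$; the choice $\ell=\frac{\gamma}{1-\gamma}+$ is also what yields $\delta>3+4\gamma$. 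The paper's Remark 5.9 points out this constraint is an artifact removable by taking longer resolvent expansions, which is incompatible with your claim that it is a ``true bottleneck'' tied to the eigenfunctions.
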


One main novelty of these results is that the same time decay bounds hold regardless of the regularity of the threshold.  This phenomenon, to the best of the authors' knowledge, is found only in one-dimensional cases  without additional assumptions on the structure of the threshold eigenspace, see \cite{GS,egd1} for example.  A similar bound holds in the case of the two-dimensional massless case, \cite{EGG2}, though one must first subtract off a finite rank operator that decays more slowly for large $t$.   

For completeness, we pair these bounds with an argument that controls the high energy portion of the evolution.  We define $P_{ac}$ to be the projection onto the absolutely continuous subspace of $L^2$.  We obtain the following family of bounds.
\begin{theorem}\label{thm:full results}
	
	Assume that $V$ is self-adjoint with continuous entries satisfying $|V(x)|\les \la x\ra^{-\delta}$.   For any fixed $0\leq \gamma \leq 1$, we have
	$$
	\|e^{-it\mathcal H} P_{ac}(\mathcal H) \la \mathcal H\ra^{-3-}\|_{L^{1,\gamma}\to L^{\infty,-\gamma}} \les \la t\ra^{-1-\gamma},
	$$
	provided $\delta>3+2\gamma$ if zero is regular. If there is an eigenvalue at zero, then for  any $0\leq\gamma< 1/2$, we require $\delta > 3+4\gamma$ .
	
\end{theorem}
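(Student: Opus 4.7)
\medskip

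\noindent\textbf{Proof proposal.}
The natural strategy is to split the evolution into low- and high-energy pieces,
$$
e^{-it\mathcal H}P_{ac}(\mathcal H)\la\mathcal H\ra^{-3-} = e^{-it\mathcal H}\chi(\mathcal H)\la\mathcal H\ra^{-3-} \;+\; e^{-it\mathcal H}[1-\chi(\mathcal H)]P_{ac}(\mathcal H)\la\mathcal H\ra^{-3-},
$$
and to attack each summand separately. The first (low-energy) piece falls under Theorem~\ref{thm:main}: since $\chi(\mathcal H)\la\mathcal H\ra^{-3-}$ is a bounded operator on $L^{1,\gamma}$ (its kernel is rapidly decaying off the diagonal under our decay hypothesis on $V$, via the standard Helffer--Sj\"ostrand functional calculus applied to a smooth compactly supported multiplier), the assertion transfers directly from Theorem~\ref{thm:main}. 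Hence the entire content of Theorem~\ref{thm:full results} beyond Theorem~\ref{thm:main} is the high-energy bound.

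\medskip

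For the high-energy piece I would use Stone's formula to write
$$
e^{-it\mathcal H}[1-\chi(\mathcal H)]P_{ac}(\mathcal H)\la\mathcal H\ra^{-3-}
= \frac{1}{2\pi i}\int_{\R} e^{-it\lambda}\la\lambda\ra^{-3-}[1-\chi(\lambda)]\bigl[\mathcal R_V^+(\lambda)-\mathcal R_V^-(\lambda)\bigr]\,d\lambda,
$$
where $\mathcal R_V^{\pm}(\lambda)=(\mathcal H-(\lambda\pm i0))^{-1}$. Iterating the resolvent identity $\mathcal R_V = \mathcal R_0 - \mathcal R_0 V \mathcal R_V$ a finite number of times produces a finite Born series
$$
\mathcal R_V = \sum_{k=0}^{N-1}(-1)^k \mathcal R_0(V\mathcal R_0)^k \;+\; (-1)^N \mathcal R_0(V\mathcal R_0)^{N-1} V \mathcal R_V V\mathcal R_0,
$$
(with appropriate symmetrization so as to expose $V$ on both sides of $\mathcal R_V$ for the tail). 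Each ``free'' term $\mathcal R_0 (V\mathcal R_0)^k$ is analyzed directly using the explicit 3D free Dirac resolvent kernel built from $R_0(\lambda^2)$ via \eqref{eqn:resolvdef}, whose spatial kernel is $\frac{e^{\pm i\lambda |x-y|}}{4\pi|x-y|}$ dressed with polynomials in $\lambda$ and the Dirac matrices. The tail term is controlled using limiting-absorption bounds for $\mathcal R_V$ sandwiched between two copies of $V$, which are standard at high energy for potentials with our decay.

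\medskip

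For the free contribution, the extra smoothing factor $\la\lambda\ra^{-3-}$ is precisely what is needed: because $D_0^2=-\Delta$, the propagator $e^{-itD_0}$ is a matrix version of the half-wave propagator $e^{\pm it\sqrt{-\Delta}}$, and the high-energy $L^1\to L^\infty$ dispersive bound for the three-dimensional wave equation decays like $\la t\ra^{-1}$ at the cost of several derivatives; here three (and a bit more) suffice. To upgrade from $\la t\ra^{-1}$ to $\la t\ra^{-1-\gamma}$, I would introduce the spatial weights $\la x\ra^{-\gamma}$ and $\la y\ra^{-\gamma}$ and integrate by parts once (or twice, in the non-regular case) in $\lambda$ against $e^{-it\lambda}$. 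Each derivative of the phase $e^{\pm i\lambda|x-y|}$ produces a factor $|x-y|\leq \la x\ra + \la y\ra$ which is absorbed either by the external weights or by one extra copy of $V\la x\ra^{2\gamma}$ inside the Born iterates; this is exactly what forces $\delta > 3+2\gamma$ in the regular case and $\delta>3+4\gamma$ when zero is not regular (where one needs an additional copy of $V$ to pair with the less regular intermediate resolvent).

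\medskip

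The main obstacle, in my view, is the tail term with the full perturbed resolvent $\mathcal R_V$: unlike the free iterates, $\mathcal R_V$ cannot be differentiated in $\lambda$ by a simple stationary-phase calculation, so one must use weighted limiting-absorption bounds of the form $\|\la x\ra^{-\sigma}\p_\lambda^j \mathcal R_V^\pm(\lambda)\la x\ra^{-\sigma}\|\les \la\lambda\ra^{-1-j+\epsilon}$, valid at high energies under sufficient decay of $V$. Combining these with the cutoff $[1-\chi(\lambda)]\la\lambda\ra^{-3-}$ yields an absolutely convergent $\lambda$-integral whose oscillation in $e^{-it\lambda}$ is harvested by $j=1$ or $j=2$ integrations by parts. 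Bookkeeping of the powers of $\la x\ra$, $\la y\ra$ moved across the chain of operators is what fixes the decay rate $\delta$ in the statement; once this is in place, the free and tail contributions both decay at the claimed rate $\la t\ra^{-1-\gamma}$, completing the proof.
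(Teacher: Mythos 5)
Your overall architecture matches the paper's: split into low and high energy, handle the low-energy piece by the threshold analysis behind Theorem~\ref{thm:main}, and handle the high-energy piece by a finite Born series whose tail is controlled by a limiting absorption principle, with the extra time decay $\la t\ra^{-1-\gamma}$ obtained by trading derivatives of the phase for powers of $|x-y|\les\la x\ra\la y\ra$ and interpolating. (One small remark on the low-energy reduction: rather than proving that $\chi(\mathcal H)\la\mathcal H\ra^{-3-}$ is bounded on $L^{1,\gamma}$ via Helffer--Sj\"ostrand --- an extra step you would have to justify --- it is simpler to observe that $\chi(\lambda)\la\lambda\ra^{-3-}$ is an admissible low-energy cutoff and re-run the low-energy lemmas with it directly, which is what the paper implicitly does.)

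Two points in your high-energy sketch are, however, wrong. First, the limiting-absorption bound you invoke, $\|\la x\ra^{-\sigma}\p_\lambda^j\mR_V^\pm(\lambda)\la x\ra^{-\sigma}\|\les\la\lambda\ra^{-1-j+\epsilon}$, is false for the Dirac operator: since $\mR_0^\pm(\lambda)=(D_0+\lambda)R_0^\pm(\lambda^2)$ carries an extra factor of $\lambda$ relative to the Schr\"odinger resolvent, the correct statement \eqref{eqn:LAP} is only uniform boundedness $\sup_\lambda\|\p_\lambda^k\mR_V^\pm(\lambda)\|_{L^{2,\sigma+k}\to L^{2,-\sigma-k}}\les1$, with no decay in $\lambda$; moreover each outer free resolvent factor in the tail $\mR_0V\mR_VV\mR_0$ contributes a \emph{growth} of order $\la\lambda\ra$ in weighted $L^2$ (cf.\ \eqref{eqn:Gamma hi}). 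The $\lambda$-integral converges only because $\la\lambda\ra^{-3-}\cdot\la\lambda\ra^{2}=\la\lambda\ra^{-1-}$, which is exactly why three-plus derivatives are needed; with the decay you assumed the bookkeeping would come out wrong (and would suggest far less smoothing suffices). Second, the hypothesis $\delta>3+4\gamma$ in the non-regular case has nothing to do with the high-energy Born iterates: the high-energy argument needs only $\delta>3$ for the weighted $|t|^{-2}$ bound regardless of the threshold (Proposition~\ref{prop:hi energy}). The extra decay is consumed entirely at low energy, where the zero eigenvalue produces a $\lambda^{-1}$ singularity in $(M^\pm)^{-1}(\lambda)$ and the Lipschitz estimates of Proposition~\ref{prop:Minv eval} force the choice $\ell(1-\gamma)>\gamma$, yielding $\delta>3+2(\gamma+\ell(1-\gamma))=3+4\gamma+$ and the restriction $\gamma<1/2$. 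Since you defer the low-energy piece to Theorem~\ref{thm:main} the hypothesis is inherited correctly, but the mechanism you describe for it is not where it comes from.
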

We establish the weighted bounds by developing appropriate representations of the spectral measure associated with the perturbed evolution.  
Finally, as an application of the Lipschitz continuity properties of the spectral measure that we develop, we obtain weaker bounds with minimal decay assumptions on the potential.

\begin{theorem}\label{thm:weak results}
	Assume that $V$ is self-adjoint with continuous entries satisfying $|V(x)|\les \la x\ra^{-\delta}$ for some $\delta>1$.  If zero energy is regular, then
	$$
	\|e^{-it\mathcal H} P_{ac}(\mathcal H) \la \mathcal H\ra^{-3-}\|_{L^{1}\to L^{\infty}} \les 1.
	$$
\end{theorem}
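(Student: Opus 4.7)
The plan is to start from Stone's formula
\begin{equation*}
e^{-it\mathcal H}P_{ac}(\mathcal H)\la\mathcal H\ra^{-3-} = \frac{1}{2\pi i}\int_{\R} e^{-it\lambda}\la\lambda\ra^{-3-}\bigl[\mathcal R_V^+(\lambda) - \mathcal R_V^-(\lambda)\bigr]\,d\lambda
\end{equation*}
and to bound the $L^1\to L^\infty$ operator norm by the $L^\infty_{x,y}$ norm of the integral kernel. Since $|e^{-it\lambda}|=1$, the task reduces to the $t$-independent estimate
\begin{equation*}
\sup_{x,y\in\R^3}\int_{\R}\la\lambda\ra^{-3-}\bigl|[\mathcal R_V^+(\lambda) - \mathcal R_V^-(\lambda)](x,y)\bigr|\,d\lambda \les 1.
\end{equation*}

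I would split the integral at $|\lambda|=1$. For the low-energy portion $|\lambda|\leq 1$ I would appeal to the Lipschitz continuity of the spectral measure developed earlier in the paper for the proof of Theorem~\ref{thm:main}; the regularity of zero together with the assumption $\delta>1$ is exactly what that analysis needs, and it delivers a pointwise-in-$(x,y)$ bound on $[\mathcal R_V^+(\lambda)-\mathcal R_V^-(\lambda)](x,y)$ that is continuous in $\lambda$ on $[-1,1]$, so the contribution is controlled by a constant. For the high-energy portion I would expand the perturbed resolvent via the Born series
\begin{equation*}
\mathcal R_V^\pm(\lambda) = \sum_{n=0}^\infty (-1)^n\mathcal R_0^\pm(\lambda)\bigl(V\mathcal R_0^\pm(\lambda)\bigr)^n,
\end{equation*}
which converges in the appropriate weighted sense once $|\lambda|\geq 1$. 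A direct computation from $\mathcal R_0^\pm(\lambda) = (D_0+\lambda)R_0^\pm(\lambda^2)$ and the explicit kernel $R_0^\pm(\lambda^2)(x,y) = e^{\pm i\lambda|x-y|}/(4\pi|x-y|)$ gives $|[\mathcal R_0^+(\lambda)-\mathcal R_0^-(\lambda)](x,y)|\les \lambda^2$ uniformly in $(x,y)$, while each higher Born term contributes at worst polynomial growth in $\lambda$ tempered by the spatial decay of $V$. The weight $\la\lambda\ra^{-3-}$ then absorbs this polynomial growth and closes the integrability.

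The main obstacle is executing the high-energy analysis with only the minimal decay $\delta>1$: the free Dirac resolvent kernel has a $|x-y|^{-2}$ on-diagonal singularity and only $|x-y|^{-1}$ pointwise decay off-diagonal, so the iterated convolutions in the Born series must be bookkept in a weighted pointwise fashion in which the spatial decay of $V$ just barely compensates for the slow decay of $\mathcal R_0^\pm$. This is exactly the content of the Lipschitz continuity machinery developed earlier in the paper for the difference $\mathcal R_V^+ - \mathcal R_V^-$; the role of the regularization $\la\mathcal H\ra^{-3-}$ is to convert these estimates into absolutely integrable contributions in $\lambda$ without requiring any additional $\lambda$-derivatives or stronger decay of the potential than $\delta>1$.
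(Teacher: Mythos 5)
Your low-energy treatment is essentially the paper's: Theorem~\ref{thm:weak decay} with $\gamma=0$ (the tailored expansion \eqref{eqn:Born tailored} into $\mR_L$ and $\mR_H$ pieces plus the bounds on $(M^\pm)^{-1}$) gives exactly the uniform-in-$(x,y)$, integrable-in-$\lambda$ control of $\chi(\lambda)[\mR_V^+-\mR_V^-](\lambda)(x,y)$ under $\delta>1$ that you invoke. The gap is in your high-energy argument. You propose summing the full Born series $\mR_V^\pm=\sum_{n\ge0}(-1)^n\mR_0^\pm(V\mR_0^\pm)^n$ for $|\lambda|\ge1$, but for the Dirac operator this series does not converge at high energy: $\mR_0^\pm(\lambda)=(D_0+\lambda)R_0^\pm(\lambda^2)$, and while $\|R_0^\pm(\lambda^2)\|_{L^{2,\sigma}\to L^{2,-\sigma}}\les\la\lambda\ra^{-1}$, the factor $D_0+\lambda$ exactly cancels that decay, so $\|\mR_0^\pm(\lambda)\|_{L^{2,\sigma}\to L^{2,-\sigma}}=O(1)$ uniformly in $\lambda$. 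Hence $\|\mR_0^\pm(\lambda)V\|$ does not become small as $|\lambda|\to\infty$, the $n$-th Born term is not summable, and no fixed weight $\la\lambda\ra^{-3-}$ can rescue a divergent series. Relatedly, your argument never uses the continuity of the entries of $V$, which the theorem assumes; that hypothesis is needed precisely for the high-energy limiting absorption principle \eqref{eqn:LAP} for the \emph{perturbed} resolvent, which is what replaces the Born series here.

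The paper instead uses the finite identity $\mR_V=\mR_0-\mR_0V\mR_0+\mR_0V\mR_V V\mR_0$ (equation \eqref{eqn:hi born}): the first two terms are handled by the $\mR_L/\mR_H$ splitting and the $+/-$ cancellation (Lemma~\ref{lem:hi born1}), and the tail is written as an $L^2$ pairing $\la\Gamma_{1,x}^*,V\mR_V V\Gamma_{2,y}\ra$ and controlled by \eqref{eqn:LAP} together with the weighted bounds \eqref{eqn:Gamma hi}; each outer factor costs at most one power of $\lambda$, which is what the weight $\la\lambda\ra^{-3-}$ absorbs. Your appeal to "the Lipschitz continuity machinery" for the high-energy regime is also misplaced: those bounds are built from the threshold expansions of $M^\pm(\lambda)^{-1}$ and are only valid on the support of $\chi$. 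To repair your proof you would need to replace the infinite series by this finite expansion and bring in the limiting absorption principle for $\mR_V^\pm$ on the support of $\widetilde\chi$.
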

We note that this result is essentially sharp with respect to differentiability of the initial data and required decay at infinity.  The free Dirac evolution requires the same amount of differentiability, see Theorem~\ref{thm:free} below.

The class of potentials considered here are of the form obtained by linearizing about a standing wave solution to a nonlinear Dirac equation, see \cite{BCbook} for example.
Our estimates follow by treating the Dirac evolution as an element of the functional calculus.  For the potentials we consider $\mathcal H=D_0+V$ is self-adjoint, and $\sigma(\mathcal H)=\R$.  Further, there is a Weyl criterion, $\sigma_{c}(\mathcal H)=\sigma_c(D_0)=\R$, and there is no singularly continuous spectrum or embedded eigenvalues, \cite{GM,BC}.  The spectral measure may be constructed in terms of the limiting resolvent operators
$$
\mR_V^\pm(\lambda)=\lim_{\epsilon\to 0^+} [D_0+V-(\lambda\pm i\epsilon)]^{-1}.
$$
These operators are well-defined by Agmon's limiting absorption principle as operators on weighted $L^2$ spaces, \cite{agmon}, and their relationship to the Schr\"{o}dinger resolvents \eqref{eqn:resolvdef}.  The difference of these limiting operators provide the spectral measure.  Specifically, the Stone's formula allows us to express the evolution of the solution operator as
\begin{align}\label{eqn:stone}
	e^{-it\mathcal H}P_{ac}(\mathcal H)f=\frac{1}{2\pi i} \int_{\R} e^{-it\lambda} [\mR_V^+-\mR_V^-](\lambda) f\, d\lambda.
\end{align}
Our methods seek to understand the perturbed resolvents $\mR_V^\pm$ as perturbations of the free resolvent operators
$$
\mR_0^\pm(\lambda) = \lim_{\epsilon\to 0^+} [D_0-(\lambda\pm i\epsilon)]^{-1}.
$$
Using \eqref{dirac_schro_free}, we obtain the identity $\mR_0^\pm (\lambda) =(D_0+\lambda)R_0^\pm(\lambda)$ where $R_0^\pm(\lambda)=(-\Delta-(\lambda\pm i0))^{-1}$ are the limiting resolvent operators for the Schr\"{o}dinger operator.  Due to the well-known explicit formulas for these, one may write explicit formulas for the Dirac resolvent, see \eqref{eqn:R0 ugly} below.   

Global dispersive bounds that seek to control the $L^\infty$ size of solutions are well-studied in the Schr\"odinger, wave, and Klein-Gordon contexts, see the recent survey paper \cite{Scsurvey}.  The dispersive estimates for the three dimensional Dirac equation is more studied going back to the work of Boussa\"id \cite{Bouss1}, and D'Ancona and Fanelli, \cite{DF} in the massive $m>0$ in the weighted-$L^2$ setting.  The characterization of threshold obstructions as resonances and eigenvalues along with their effect on the dispersive bounds in two and three dimensions has  been studied by  Erdo\smash{\u{g}}an and the first author in \cite{egd}, along with Toprak, \cite{EGT,EGT2} in the massive case, and with Goldberg in the massless case \cite{EGG2}.  More recent work studied the dispersive bounds in the one dimensional case by Erdo\smash{\u{g}}an and the first author in \cite{egd1}. See also the recent work of Kraisler, Sagiv and Weinstein, \cite{KSW}, which considered non-compact perturbations in one dimension.   Much of the work  relies on the techniques developed in the study of other dispersive equations, notably the Schr\"odinger equation \cite{Mur,GS,ES,KS,eg2}, which analyze the effect of threshold energy obstructions. 

Nonlinear Dirac equations have also garnered interest.  See for example, \cite{EV,PS,BH3,BH,CTS,BC2}, and Boussa\"id and Comech's monograph, \cite{BCbook}.  There is a longer history in the study of spectral properties of Dirac operators.  Limiting absorption principles for the Dirac operators have been studied in \cite{Yam,GM,BG,EGG,CGetal}.    The lack of embedded eigenvalues, singular continuous spectrum and other spectral properties is well established, \cite{BG1,GM,MY,CGetal,BC}. 

There has been recent work on the massless three dimensional Dirac equation with a Coulomb potential of the from $\nu/|x|$.  Here one must restrict the value of $\nu$ to an appropriate interval to ensure there is a self-adjoint extension of the Dirac operator.  Danesi, \cite{Dan}, and separately Cacciafesta, S\'er\'e and Zhang, \cite{CSZ} establish various families of Strichartz estimates for these operators with Coulomb potentials.

Strichartz estimates for potentials of this form are known when zero is regular, \cite{EGG}, in both the massive and massless cases.  By interpolating the bound in Theorem~\ref{thm:full results} with the $L^2$ conservation law, one obtains a family of $L^p$ dispersive bounds of the form
\begin{align}
	\|e^{-it\mathcal H} \la \mathcal H\ra^{\frac32-\frac{3}{p}-}P_{ac}(\mathcal H)f\|_{L^{p'}}\les |t|^{\frac{3}{2}-\frac{3}{p}} \|f\|_{L^p}, \qquad 1\leq p\leq 2.
\end{align}
As in the classic paper of Ginibre and Velo, \cite{GV}, these may be used to deduce Strichartz estimates.
\begin{corollary}\label{cor:strich}
	Assume that $V$ is self-adjoint with continuous entries satisfying $|V(x)|\les \la x\ra^{-\delta}$ for some $\delta>3$. If zero energy is not regular, one has
	$$
	\| e^{-it\mathcal H} \la \mathcal H\ra^{\frac3{r}-\frac{3}{2}-}P_{ac}(\mathcal H)f\|_{L^q_t L^r_x}\les \|f\|_{L^2}, \qquad \frac{2}{q}+\frac{2}{r}= 1, \quad q>2, \, 2\leq r\leq \infty.
	$$
\end{corollary}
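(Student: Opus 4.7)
The plan is to apply the classical $TT^*$ duality argument of Ginibre--Velo \cite{GV}, taking as input the dispersive bound from Theorem~\ref{thm:full results} at $\gamma=0$. Since $0<\tfrac{1}{2}$, that theorem applies in the non-regular case under the hypothesis $\delta>3$ (precisely the assumption of the corollary) and furnishes the unweighted estimate
\[
\|e^{-it\mathcal H}\la\mathcal H\ra^{-3-}P_{ac}(\mathcal H)\|_{L^1\to L^\infty}\les \la t\ra^{-1}.
\]
Combining this with the $L^2$ conservation law $\|e^{-it\mathcal H}\|_{L^2\to L^2}=1$ through Stein's analytic interpolation theorem, applied to the family $T_zf=e^{-it\mathcal H}\la\mathcal H\ra^{-(3+\eps)z}P_{ac}(\mathcal H)f$ on the strip $0\leq\text{Re}\,z\leq 1$, produces for every $2\leq r<\infty$ the intermediate dispersive bound
\[
\|e^{-it\mathcal H}\la\mathcal H\ra^{2(3/r-3/2)-}P_{ac}(\mathcal H)\|_{L^{r'}\to L^r}\les |t|^{-(1-2/r)},
\]
essentially the displayed formula preceding the corollary.

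Second, I would set $Tf(t)=e^{-it\mathcal H}\la\mathcal H\ra^{3/r-3/2-}P_{ac}(\mathcal H)f$ and use the self-adjointness of $\mathcal H$ to write
\[
TT^*F(t)=\int_{\R} e^{-i(t-s)\mathcal H}\la\mathcal H\ra^{2(3/r-3/2)-}P_{ac}(\mathcal H)F(s)\,ds.
\]
Applying the intermediate bound pointwise in $s$ gives $\|TT^*F(t)\|_{L^r_x}\les \int_{\R}|t-s|^{-(1-2/r)}\|F(s)\|_{L^{r'}_x}\,ds$. For $r>2$ the exponent $\lambda=1-\tfrac{2}{r}$ lies in $(0,1)$, so the one-dimensional Hardy--Littlewood--Sobolev inequality in the time variable yields
\[
\|TT^*F\|_{L^q_tL^r_x}\les \|F\|_{L^{q'}_tL^{r'}_x}\qquad\text{whenever}\ \tfrac{1}{q'}-\tfrac{1}{q}=\lambda,
\]
which rearranges to $\tfrac{2}{q}+\tfrac{2}{r}=1$ with $q>2$. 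The $TT^*$ principle then converts this to $\|Tf\|_{L^q_tL^r_x}\les \|f\|_{L^2_x}$, the desired Strichartz estimate.

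The main obstacle is the legitimacy of the interpolation step: the spectral multiplier $\la\mathcal H\ra^{-(3+\eps)z}$ must be controlled on the relevant $L^p$-spaces uniformly in $\text{Im}\,z$ across the strip, which reduces to admissible growth bounds on the imaginary powers $\la\mathcal H\ra^{is}$. Under our self-adjointness and decay hypotheses on $V$, these bounds follow from the functional calculus of $\mathcal H$ and the intertwining with the free operator $D_0$. The exclusion of the endpoint $(q,r)=(2,\infty)$ is forced by the failure of Hardy--Littlewood--Sobolev at $\lambda=1$, mirroring the familiar restriction for the classical 3D wave Strichartz estimates.
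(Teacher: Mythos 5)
Your overall strategy coincides with the paper's own (two-sentence) sketch: take the $\gamma=0$ unweighted bound of Theorem~\ref{thm:full results}, which in the non-regular case holds for $\delta>3$, interpolate against the $L^2$ conservation law, and run the Ginibre--Velo $TT^*$ plus one-dimensional fractional integration argument. Two points need repair before this is a proof.

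First, the scaling condition you state for the Hardy--Littlewood--Sobolev step is off: convolution in $t$ with $|t-s|^{-\lambda}$, $\lambda=1-\frac{2}{r}\in(0,1)$, maps $L^{q'}_t\to L^{q}_t$ precisely when $\frac{1}{q'}-\frac{1}{q}=1-\lambda=\frac{2}{r}$, not when $\frac{1}{q'}-\frac{1}{q}=\lambda$ (the latter forces $q=r$ and is not the admissibility line). The corrected condition does rearrange to $\frac{2}{q}+\frac{2}{r}=1$, which is what you assert, so this is a transcription slip rather than a structural error, but as written your two displayed relations are inconsistent.

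Second, and more substantively: to obtain the $r$-dependent weight $\la \mathcal H\ra^{\frac{3}{r}-\frac{3}{2}-}$ you genuinely need Stein interpolation for the analytic family $\la\mathcal H\ra^{-(3+\eps)z}$; plain Riesz--Thorin applied to the fixed operator $e^{-it\mathcal H}\la\mathcal H\ra^{-3-}P_{ac}(\mathcal H)$ only yields the corollary with the uniform, more demanding weight $\la\mathcal H\ra^{-\frac{3}{2}-}$. Stein's theorem requires an $L^1\to L^\infty$ bound for $T_{1+iy}=e^{-it\mathcal H}\la\mathcal H\ra^{-3-}\la\mathcal H\ra^{-(3+\eps)iy}P_{ac}(\mathcal H)$ with admissible growth in $y$, and your proposed justification via ``intertwining with $D_0$'' is not available: $L^1$/$L^\infty$ bounds for wave operators are nowhere established in this paper and are exactly the kind of statement that becomes delicate (or fails) when zero is not regular. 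The workable route is to observe that the imaginary power enters the Stone formula \eqref{eqn:stone} only as the scalar multiplier $\la\lambda\ra^{-(3+\eps)iy}$, whose $\lambda$-derivatives grow polynomially in $y$ and decay in $\lambda$ upon differentiation, so the integration-by-parts and Lipschitz arguments of Sections~\ref{sec:eval} and~\ref{sec:hi} carry through with a constant that is polynomial in $y$ --- admissible for Stein interpolation. Without some argument of this type the interpolation step is a genuine gap.
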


The paper is organized as follows.  First in Section~\ref{sec:free} we establish the natural decay properties for the free massless Dirac evolution.  In Section~\ref{sec:low expansions} we develop expansions of the limiting free resolvent operators in a neighborhood of the threshold, and use them to establish continuity and differentiability properties of the spectral measure near the threshold.  In Section~\ref{sec:zero reg} we prove the low energy dispersive bounds in Theorem~\ref{thm:main} when zero is regular.  As a further application of the Lipschitz continuity of the spectral measure, we prove families of time-integrable estimates on polynomially weighted space in Subsection~\ref{subsec:wtd bds}.  In Section~\ref{sec:eval} we show that the same estimates hold even if there is a threshold eigenvalue at the cost of further decay of the potential.  In Section~\ref{sec:thresholds} we characterize the existence of zero energy eigenvalues and relate them to the spectral measure constructed in Section~\ref{sec:low expansions}.  Finally in Section~\ref{sec:hi} we control the high energy portion of the evolution.

\section{Free Dirac dispersive estimates}\label{sec:free}

To understand the dynamics of the perturbed solution operator $e^{-it\mathcal H}P_{ac}(\mathcal H)$, we first study the dynamics of the free solution operator $e^{-itD_0}$ when $V\equiv 0$.  In this section we develop the needed oscillatory integral estimates to understand the free evolution, as well as prove several estimates about the dynamics of solutions to the free massless Dirac equation.

Due to the relationship between the massless free Dirac equation and the
free wave equation, $D_0^2 f=-\Delta f$,
we can expect a natural time decay rate of size $|t|^{-1}$ 
as one has in the wave equation  provided the initial data has more than $2$ weak derivatives in $L^1(\R^2)$.
In the case of Dirac equation, as in Schr\"{o}dinger equation, the time decay for large $|t|$ may be improved  at the cost of spatial weights. 

Much of our low energy analysis will rely on relationships between smoothness of a function and the decay of its Fourier Transform, which we encapsulate in the following.

\begin{lemma}\label{lem:osc int}
	
	Let $\mathcal E(\lambda)$ be a a function supported on $(-1,1)$ with $\mathcal E(\lambda)$ bounded and $\mathcal E'(\lambda)\in L^1$.
	Then, we have the bound
	\begin{align*}
		\bigg|\int_{\R} e^{-it\lambda} \mathcal E(\lambda) \, d\lambda \bigg|
		\les \frac{1}{|t|} \int_{\R} \bigg|\mathcal E'(\lambda) - \mE'\!\left(\lambda-\frac{\pi}{t}\right)\!\bigg| \, d\lambda.
	\end{align*}
\end{lemma}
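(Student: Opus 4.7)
The plan is to combine a single integration by parts with the classical ``half-period shift'' trick that exploits $e^{-i\pi}=-1$.

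First, since $\mathcal E$ is compactly supported on $(-1,1)$ with $\mathcal E'\in L^1$, $\mathcal E$ is absolutely continuous and vanishes outside $(-1,1)$, so integration by parts gives
\begin{equation*}
\int_{\R} e^{-it\lambda}\mathcal E(\lambda)\,d\lambda
= \frac{1}{it}\int_{\R} e^{-it\lambda}\mathcal E'(\lambda)\,d\lambda,
\end{equation*}
with no boundary terms. This already produces the desired $1/|t|$ gain, and shifts the analysis to the integral of $e^{-it\lambda}\mathcal E'(\lambda)$.

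Next I would apply the half-period shift. In the integral on the right, make the change of variable $\lambda \mapsto \lambda - \pi/t$ (pulling the shift out of the exponential) and use $e^{-it(\lambda-\pi/t)}=e^{-it\lambda}e^{i\pi}=-e^{-it\lambda}$ to obtain
\begin{equation*}
\int_{\R} e^{-it\lambda}\mathcal E'(\lambda)\,d\lambda
= -\int_{\R} e^{-it\lambda}\mathcal E'\!\left(\lambda-\tfrac{\pi}{t}\right)d\lambda.
\end{equation*}
Averaging this identity with the original expression yields
\begin{equation*}
\int_{\R} e^{-it\lambda}\mathcal E'(\lambda)\,d\lambda
= \frac{1}{2}\int_{\R} e^{-it\lambda}\left[\mathcal E'(\lambda)-\mathcal E'\!\left(\lambda-\tfrac{\pi}{t}\right)\right]d\lambda.
\end{equation*}
Taking absolute values inside, combined with the $1/(it)$ factor from the first step, gives exactly the claimed bound (with constant $1/2$ absorbed into $\les$).

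There is no real obstacle here: the only hypotheses one needs are that $\mathcal E$ be absolutely continuous with $\mathcal E'\in L^1$, so that the integration by parts is justified and so that the translate $\mathcal E'(\cdot-\pi/t)$ lies in $L^1$. Compact support is used only to kill the boundary terms. The lemma will later be applied to spectral-measure expressions where $\mathcal E'$ is typically just barely in $L^1$ but has some Hölder-type modulus of continuity in $\lambda$, and the modulus $\int|\mathcal E'(\lambda)-\mathcal E'(\lambda-\pi/t)|\,d\lambda$ produces the extra factor $|t|^{-\gamma}$ responsible for the improved decay rate $\langle t\rangle^{-1-\gamma}$ in Theorem~\ref{thm:main}.
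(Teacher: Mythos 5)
Your proof is correct and follows essentially the same route as the paper: one integration by parts using the compact support to kill boundary terms, followed by the half-period shift $\lambda\mapsto\lambda-\pi/t$ and averaging to produce the difference $\mathcal E'(\lambda)-\mathcal E'(\lambda-\pi/t)$. The only cosmetic difference is that the paper treats $|t|<1$ separately (where the shifted term vanishes on the support of $\mathcal E$), while your unified argument covers all $t\neq 0$ at once.
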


\begin{proof}
	Since $\mathcal E(\lambda)$ is bounded and $\mathcal E'\in L^1$ we may integrate by parts once, the support of $\mathcal E$ ensures there are no boundary terms,
	\begin{align*}
		\int_{\R} e^{-it\lambda} \mathcal E(\lambda) \,d\lambda
		= \frac{1}{it} \int_{\R}e^{-it\lambda} \mathcal E'(\lambda) \, d\lambda.
	\end{align*}
	We note that if $|t|<1$ then $\mathcal E'(\lambda - \pi/t) = 0$ on the support of $\mathcal E(\lambda)$. Applying the triangle inequality to the equality above proves the claim.
	For $|t| > 1$ large, we use the support of $\mathcal E$, a change of variables $\lambda \mapsto \lambda - \pi/t$, and then the fact that $e^{i\pi} = -1$ to write
	\begin{align*}
		\int_{\R} e^{-it\lambda} \mE'(\lambda) \,d\lambda
		= \int_{\R}e^{-it(\lambda-\pi/t)}\mE'(\lambda-\pi/t) \, d\lambda
		= -\int_{\R} e^{-it\lambda}\mE'\!\left(\lambda-\frac{\pi}{t}\right)  d\lambda.
	\end{align*}
	Then using the triangle inequality, we have
	\begin{align*}
		\left|\int_{\R} e^{-it\lambda} \mathcal E(\lambda) \, d\lambda\right|
		& = \left|\frac{1}{it} \int_{\R} e^{-it\lambda} \mathcal E'(\lambda) \, d\lambda \right|  = \left|\frac{1}{2it} \int_{\R} e^{-it\lambda} \bigg[ \mathcal E'(\lambda)-\mE'\!\left(\lambda-\frac{\pi}{t}\right)\!\bigg]\, d\lambda \right| \\
		& \les \frac{1}{|t|} \int_{\R} \bigg|\mathcal E'(\lambda) - \mE'\!\left(\lambda-\frac{\pi}{t}\right)\!\bigg| \, d\lambda
	\end{align*}
	as desired. 
\end{proof}

Using this oscillatory integral bound, we can prove dispersive bounds for the free Dirac evolution.

\begin{theorem}\label{thm:free} 
	We have the estimate
	\begin{align*}
		{\left\| e^{-it D_0} \la D_0 \ra^{-2-} 
			\right\|}_{L^1\to L^\infty} \les |t|^{-1}.
	\end{align*}
	Furthermore,  for $|t| > 1$ and  any $0\leq \gamma \leq 1$, we have
	\begin{align*}
		\left\|\la x\ra^{-\gamma} e^{-it D_0} \la D_0 \ra^{-2-} \la y\ra^{-\gamma}
		\right\|_{L^1\to L^\infty} \les | t|^{-1-\gamma}.
	\end{align*}
	
\end{theorem}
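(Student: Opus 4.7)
The plan is to apply Stone's formula and reduce the evolution to an oscillatory $\lambda$-integral, then invoke Lemma~\ref{lem:osc int}. Using $\mathcal R_0^\pm(\lambda)=(D_0+\lambda)R_0^\pm(\lambda^2)$ together with the explicit 3D Schr\"odinger kernel $R_0^\pm(\lambda^2)(x,y)=e^{\pm i\lambda|x-y|}/(4\pi|x-y|)$, I first express the free evolution kernel as
\begin{align*}
[e^{-itD_0}\la D_0\ra^{-2-}](x,y)=\frac{1}{2\pi i}\int_\R e^{-it\lambda}\la\lambda\ra^{-2-}(D_0+\lambda)\,\frac{i\sin(\lambda|x-y|)}{2\pi|x-y|}\,d\lambda,
\end{align*}
noting that the apparent $|x-y|^{-1}$ singularity in the spectral density is cancelled by $\sin(\lambda|x-y|)=O(\lambda|x-y|)$ near the diagonal.

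Next I split with a smooth cutoff $\chi(\lambda)$ into low-frequency ($|\lambda|\les 1$) and high-frequency ($|\lambda|\gtrsim 1$) pieces. Writing $\sin(\lambda|x-y|)=\tfrac{1}{2i}(e^{i\lambda|x-y|}-e^{-i\lambda|x-y|})$, each piece becomes a linear combination of integrals of the form $\int e^{-it\lambda}e^{\pm i\lambda|x-y|}p(\lambda,|x-y|)\la\lambda\ra^{-2-}\,d\lambda$ with $p$ at most linear in $\lambda$. Applying Lemma~\ref{lem:osc int} with $\mathcal E(\lambda)$ equal to the non-oscillatory integrand reduces matters to showing $\mathcal E'\in L^1_\lambda$; this is trivial at low frequencies by compact support, and at high frequencies it is precisely the condition that dictates the exponent $2+$, since $\partial_\lambda(\lambda\la\lambda\ra^{-2-})\in L^1_\lambda$. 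This yields the uniform kernel bound $|t|^{-1}$ claimed in the first assertion.

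For the weighted estimate, I exploit the sharp form of Lemma~\ref{lem:osc int}: computing $\mathcal E'(\lambda)-\mathcal E'(\lambda-\pi/t)$, the principal contribution comes from differentiating the oscillating factor $e^{\pm i\lambda|x-y|}$, producing a term of the form
\begin{align*}
\pm i|x-y|\,e^{\pm i\lambda|x-y|}\bigl(1-e^{\mp i\pi|x-y|/t}\bigr)\,p(\lambda,|x-y|)\la\lambda\ra^{-2-}.
\end{align*}
The elementary inequality $|1-e^{i\theta}|\leq 2^{1-\gamma}|\theta|^\gamma$ with $|\theta|=\pi|x-y|/|t|$ provides an extra gain of $(|x-y|/|t|)^\gamma$ in the integrand; shift-corrections to the remaining factors are handled identically. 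Combining with the $|t|^{-1}$ prefactor from Lemma~\ref{lem:osc int} gives
\begin{align*}
|[e^{-itD_0}\la D_0\ra^{-2-}](x,y)|\les |t|^{-1-\gamma}|x-y|^\gamma\les |t|^{-1-\gamma}\la x\ra^\gamma\la y\ra^\gamma,
\end{align*}
which is the weighted bound after factoring out the weights.

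I expect the main technical obstacle to be controlling the terms that arise when $D_0=-i\alpha\cdot\nabla_x$ is applied to $\sin(\lambda|x-y|)/|x-y|$: besides the benign $\lambda\cos(\lambda|x-y|)/|x-y|$ contribution, there are $\sin(\lambda|x-y|)/|x-y|^2$ pieces with an a priori $|x-y|^{-2}$ singularity. I plan to dispatch these by splitting into the regimes $\lambda|x-y|\les 1$ and $\lambda|x-y|\gtrsim 1$: in the former, Taylor expansion $\sin(\lambda|x-y|)=O(\lambda|x-y|)$ makes the combined symbol bounded, while in the latter, the $|x-y|^{-2}$ factor is no worse than $\lambda/|x-y|$, matching the growth of the other terms and absorbing into $\la\lambda\ra^{-2-}$ after differentiation. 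The book-keeping is routine once the oscillatory estimate is in hand.
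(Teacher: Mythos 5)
Your proposal is correct and follows essentially the same route as the paper: Stone's formula with the explicit kernel of $[\mR_0^+-\mR_0^-](\lambda)$, one integration by parts, and the half-period shift built into Lemma~\ref{lem:osc int} to extract the extra factor $(|x-y|/|t|)^\gamma$. The only cosmetic difference is that you obtain the H\"older gain by factoring out $e^{\pm i\lambda|x-y|}$ and using $|1-e^{i\theta}|\les|\theta|^\gamma$, whereas the paper interpolates between the sup bound and the mean-value-theorem bound on $\partial_\lambda\mu_0$ (and, for the weighted high-energy piece, integrates by parts twice and interpolates in $t$); these are the same estimate in different packaging, and in both versions the factor $|x-y|$ produced by differentiating the phase must be absorbed by the $|x-y|^{-1}$ already present in the symbol, which your explicit kernel computation does supply.
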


\begin{proof}
	First note that the Stone's formula, \eqref{eqn:stone}, for the free evolution is
	\begin{align}\label{eq:Stone}
		e^{-itD_0} =\int_{\R}  e^{-it\lambda} [\mathcal R_0^+ -\mR_0^-](\lambda)(x,y) \, d\lambda.
	\end{align}
	Write $\mu(\lambda, x,y) \coloneqq[\mR_0^+-\mR_0^-](\lambda)( x, y)$. Utilizing \eqref{eqn:resolvdef} and the well-known expansion for the limiting resolvents of the free Schr\"{o}dinger in three dimensions, see \eqref{eqn:R0 ugly} below, we have
	\begin{align*}
		\mu(\lambda, x,y) = (-i\alpha \cdot \nabla +\lambda) \bigg( \frac{i\sin(\lambda |x-y|)}{2\pi |x-y|} \bigg).
	\end{align*}
	Similar to the estimates we establish in Lemma~\ref{lem:R0 expansions} below, we have the following bounds: 
	\begin{align}\label{eqn:mu0 bounds} 
		|\mu(\lambda,x,y)| \les \min\!\left(|\lambda|^2, \frac{|\lambda|}{|x - y|}\right),
		\quad
		|\partial_\lambda \mu(\lambda,x,y)| \les |\lambda|, 
		\quad
		|\partial_\lambda^2 \mu(\lambda,x,y)| \les 1 + |\lambda|\,|x-y|.
	\end{align}
	For the first bound  (with $r = |x - y|$ and $\hat{e} = \nabla_x |x-y| = (x-y)/|x-y|$ the unit vector in the $x-y$ direction) we have
	\begin{align*}
		\mu(\lambda,x,y)=\alpha \cdot \hat{e}\, \lambda^2 \bigg( \frac{\lambda r\cos(\lambda r)-\sin(\lambda r)}{2\pi (\lambda r)^2} \bigg) +\frac{i\lambda \sin(\lambda r)}{2\pi r}.
	\end{align*}
	This is bounded by $|\lambda|/r$, which we obtain by taking $|\!\cos(\lambda r)| \les 1$ and $|\!\sin(\lambda r)| \les |\lambda r|$ in the first term and $|\!\sin(\lambda r)| \les 1$ in the second term. When $|\lambda |r \geq 1$, this is bounded by $|\lambda|^2$ as we may freely multiply by $|\lambda |r$ on the upper bound.  When $|\lambda| r<1$, we utilize the cancellation of the numerator of the first term up to order $(\lambda r)^3$ to see this is bounded by $|\lambda|^3r\leq |\lambda|^2$ as desired.  The contribution of the second term is more easily seen to be of size $|\lambda|^2$ using $|\!\sin(\lambda r)|\les |\lambda|r$.  These bounds hold for any $\lambda$.

	For the derivative, we note that
	\begin{align*}
		2\pi \partial_\lambda\mu (\lambda, x,y)
        & = -i\alpha \cdot \nabla  \big( i\cos(\lambda r) \big)+i\lambda\cos(\lambda r)+\frac{i \sin (\lambda r)}{r}\\
		& = (\alpha\cdot \hat{e}) \lambda\sin(\lambda r)+i\lambda\cos(\lambda r)+\frac{i \sin (\lambda r)}{r}
	\end{align*}
	This is easily seen to be bounded by $|\lambda|$.  Finally, 
	\begin{align*}
		|2\pi \partial_\lambda^2\mu (\lambda, x,y)| & = |(\alpha\cdot \hat{e})(\sin(\lambda r) + \lambda r\cos(\lambda r)) +2i\cos(\lambda r)-i\lambda r\sin(\lambda r)| \les 1+|\lambda|\, r.
	\end{align*}
	For the low energy, we define $\mu_0(\lambda,x,y)=\chi(\lambda)\mu(\lambda,x,y)$, so
	$\mu_0$ and its derivatives satisfy the corresponding bounds above for derivatives of $\mu$ in \eqref{eqn:mu0 bounds}. If any derivative acts on the cut-off $\chi(\lambda)$, we note that $\chi'(\lambda)$ is supported on the annulus $|\lambda|\approx 1$, so that $|\chi^{(k)}(\lambda)| \les 1$ or $|\lambda|^{-k}$ as needed.

	Using \eqref{eqn:mu0 bounds} and the support of $\chi$, which is contained in $[-1,1]$, we have
	\begin{align*}
		\bigg| \int_{\R} e^{-it\lambda} \mu_0(\lambda,x,y) \, d\lambda\bigg|  \les \int_{-1}^1 \lambda^2\, \, d\lambda,
	\end{align*}
	so this integral is bounded uniformly in $x, y$.  To obtain time decay, we integrate by parts once. Again we use \eqref{eqn:mu0 bounds}, this time to ensure there are no boundary terms  
	\begin{align*}
		\bigg| \int_{\R} e^{-it\lambda} \mu_0(\lambda,x,y) \, d\lambda\bigg|
		= \bigg|\frac{1}{it} \int_{\R} e^{-it\lambda} \partial_\lambda \mu_0(\lambda,x,y)  \, d\lambda\bigg|
		\les \frac{1}{|t|} \int_{-1}^1  |\lambda|\, d\lambda 
		\les \frac{1}{|t|}.
	\end{align*}
	Here, \eqref{eqn:mu0 bounds} was used along with the support of $\chi$.  For the low energy contribution, we note that the bounds of $1$ and $|t|^{-1}$ show that it is bounded by $\la t\ra^{-1}$. That is, the low energy contribution is bounded for all times.
	
	For the weighted bound, rather than integrate by parts again, we employ an argument based on Lipschitz continuity, which will be helpful for the perturbed case.
	Take $|\lambda_1|\leq |\lambda_2|\leq 1$.  We claim the following bound holds on the support of $\chi$:
	\begin{align}\label{eqf:mu Lipschitz}
		|\mu_0(\lambda_2,x,y) - \mu_0(\lambda_1,x,y)| 
		\les |\lambda_2|^{2-\gamma} |\lambda_2-\lambda_1|^{\gamma}, 
		\qquad 0\leq \gamma\leq 1.
	\end{align}
	By the triangle inequality and \eqref{eqn:mu0 bounds} we have
	\begin{align*}
		|\mu_0(\lambda_2,x,y) - \mu_0(\lambda_1,x,y)| 
		\les |\lambda_2|^2.
	\end{align*}
	On the other hand if we apply the mean value theorem and \eqref{eqn:mu0 bounds}, writing $I=[\min(\lambda_1,\lambda_2),\max(\lambda_1,\lambda_2)]$, we see
	\begin{align*}
		|\mu_0(\lambda_2,x,y)-\mu_0(\lambda_1,x,y)| 
		& = \bigg| \int_{\lambda_1}^{\lambda_2} \partial_\lambda \mu_0(s, x,y)\, ds \bigg| \\
		& \les |\lambda_2-\lambda_1| \sup_{s \in I} |\partial_\lambda \mu_0(s,x,y)| 
		\les |\lambda_2|\, |\lambda_2-\lambda_1|.
	\end{align*}
	Interpolating between these two bounds gives the claim in \eqref{eqf:mu Lipschitz}.
	
	On the support of $\chi$, a similar argument shows that
	\begin{align}\label{eqf:mu Lipschitz2}
		|\partial_\lambda \mu_0(\lambda_2,x,y)-\partial_\lambda \mu_0(\lambda_1,x,y)| 
		\les |\lambda_2|^{1-\gamma} |\lambda_2-\lambda_1|^{\gamma} (1+|\lambda_2|\,|x-y|)^\gamma, 
		\qquad 0 \leq \gamma 
		\leq 1.
	\end{align}
	
	Now, for the weighted bound if $|t|>1$ we apply Lemma~\ref{lem:osc int}
	(using \eqref{eqf:mu Lipschitz2} with $\mathcal E=\mu_0$, $\lambda_j$ one of $\lambda$ and $\lambda - \pi/t$) to see
	\begin{align*}
		\left|\int_{\R} e^{-it\lambda} \mu_0(\lambda,x,y) \, d\lambda\right|
		& \les \frac{1}{|t|} \int_{-1}^1 |t|^{-\gamma} |x-y|^{\gamma}\, d\lambda \les |t|^{-1-\gamma} |x-y|^{\gamma}.
	\end{align*}
	Note that $|x-y|\les \la x\ra \la y\ra$ suffices to establish the claim for low energy, when $\lambda$ is in a neighborhood of zero.
	For large $|\lambda|$, we define the complementary cut-off $\widetilde{\chi}(\lambda) = 1 - \chi(\lambda)$. We need to bound
	\begin{align*}
		\int_{\R} e^{-it\lambda} \widetilde \chi(\lambda) \la \lambda\ra^{-2-} [\mR_0^+-\mR_0^-](\lambda, x,y) \,d\lambda.
	\end{align*}
	By \eqref{eqn:mu0 bounds} this integral is not absolutely convergent uniformly in $x,y$, but these bounds suffice to ensure there are no boundary terms when integrating by parts.
	\begin{align*}
		\bigg|\int_{\R} e^{-it\lambda} \widetilde \chi(\lambda) \la \lambda\ra^{-2-} [\mR_0^+-\mR_0^-](\lambda, x,y) \,d\lambda \bigg| 
		& \les \frac{1}{|t|}\int_{\R} \bigg| \partial_\lambda [\widetilde \chi(\lambda) \la \lambda\ra^{-2-} [\mR_0^+-\mR_0^-](\lambda, x,y)] \bigg|\, d\lambda.
    \end{align*}
    We note that, on the support of $\widetilde{\chi}$, differentiation of the first two terms is comparable to division by $\lambda$. Hence, we may bound the above integral by
    \begin{align*}
        \frac{1}{|t|} \int_{\R} \la \lambda\ra^{-1-}\, d\lambda \les \frac{1}{|t|}.
	\end{align*}
	Integrating by parts twice leads to the bound
	\begin{align*}
		\bigg|\int_{\R} e^{-it\lambda} \widetilde \chi(\lambda) \la \lambda\ra^{-2-} [\mR_0^+-\mR_0^-](\lambda, x,y) \,d\lambda \bigg| 
		& \les \frac{1}{|t|^2}\int_{\R} \bigg| \partial_\lambda^2 [\widetilde \chi(\lambda) \la \lambda\ra^{-2-} [\mR_0^+-\mR_0^-](\lambda, x,y)] \bigg|\, d\lambda\\
		& \les \frac{1}{|t|^2} \int_{\R} (\la \lambda\ra^{-2-}+\la \lambda \ra^{-1-}|x-y|)\, d\lambda 
		\les \frac{\la x-y\ra}{|t|^2}.
	\end{align*}
	Noting that $\la x-y\ra\les \la x\ra \la y\ra$ and interpolating between the two bounds establishes the claim.
\end{proof}
We utilized an argument based on Lipschitz continuity of the resolvents here rather than integrating by parts twice since such an argument is beneficial in the analysis of the perturbed operator.  Utilizing Lipschitz continuity of the perturbed resolvent in a neighborhood of $\lambda=0$ will allow us to obtain faster time decay with minimal further assumptions on the decay of the potential.

The small time blow-up is a high energy phenomenon.
We may also utilize the techniques in the proof above to obtain weaker dispersive bounds that require more smoothness on the initial data to control the high energy.
\begin{corollary}\label{cor:free weak}
	
	We have the bound
	\begin{align*}
		{\left\| e^{-it D_0} \la D_0 \ra^{-3-} 
			\right\|}_{L^1\to L^\infty} \les \la t\ra ^{-1}.
	\end{align*}
\end{corollary}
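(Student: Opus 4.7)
The plan is to split the kernel of $e^{-itD_0}\la D_0\ra^{-3-}$ via Stone's formula \eqref{eq:Stone} into low and high energy pieces using the cutoffs $\chi$ and $\widetilde\chi = 1 - \chi$, and to show each piece is bounded by $\la t\ra^{-1}$ uniformly in $x,y$. The key improvement over Theorem~\ref{thm:free} is that the extra factor of $\la\lambda\ra^{-1-}$ in the weight allows an unweighted $L^1\to L^\infty$ bound that is also valid for $|t|\leq 1$.

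For the low energy contribution $\int_\R e^{-it\lambda}\chi(\lambda)\la\lambda\ra^{-3-}[\mR_0^+ - \mR_0^-](\lambda)(x,y)\,d\lambda$, the factor $\la\lambda\ra^{-3-}$ is smooth and bounded on the compact support of $\chi$, so it can be absorbed into the cutoff. The low energy argument in the proof of Theorem~\ref{thm:free} then applies verbatim: using $|\mu_0|\les \lambda^2$ on compact support gives a uniform bound of order $1$, while one integration by parts combined with $|\partial_\lambda\mu_0|\les|\lambda|$ gives $|t|^{-1}$. Together these yield $\la t\ra^{-1}$.

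For the high energy piece I would establish two estimates whose minimum produces $\la t\ra^{-1}$. For the uniform bound, the global estimate $|\mu(\lambda,x,y)|\les \lambda^2$ from \eqref{eqn:mu0 bounds} gives
$$\bigg|\int_\R e^{-it\lambda}\widetilde\chi(\lambda)\la\lambda\ra^{-3-}[\mR_0^+ - \mR_0^-](\lambda)(x,y)\,d\lambda\bigg|\les \int_1^\infty \la\lambda\ra^{-1-}\,d\lambda \les 1,$$
uniformly in $x,y,t$. For the decay bound I would integrate by parts once. The boundary terms vanish because $\widetilde\chi(\lambda)\la\lambda\ra^{-3-}\mu(\lambda,x,y)\to 0$ as $|\lambda|\to\infty$ (again using $|\mu|\les\lambda^2$), and the resulting integrand is controlled by $\la\lambda\ra^{-4-}|\mu| + \la\lambda\ra^{-3-}|\partial_\lambda\mu|\les \la\lambda\ra^{-2-}$, which is integrable uniformly in $x,y$, giving a bound of $|t|^{-1}$.

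The main obstacle in Theorem~\ref{thm:free}'s high-energy analysis was that with only the $\la\lambda\ra^{-2-}$ weight, a second integration by parts was required, and the bound $|\partial_\lambda^2 \mu|\les 1 + |\lambda||x-y|$ forced a $\la x-y\ra$ factor and hence only a weighted $L^\infty$ estimate. The extra $\la\lambda\ra^{-1-}$ here exactly compensates for the quadratic growth of $|\mu|$ in $\lambda$, so the uniform bound is achieved with no integration by parts and the $|t|^{-1}$ decay with only one. This is precisely what permits the unweighted $L^1\to L^\infty$ bound, at the cost of one additional derivative.
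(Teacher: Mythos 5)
Your proposal is correct and follows essentially the same route as the paper: the paper's (very terse) proof likewise observes that the extra factor $\la\lambda\ra^{-1-}$ makes the high-energy integral absolutely convergent via $|\mu(\lambda,x,y)|\les|\lambda|^2$, giving uniform boundedness for all $t$, and then combines this with the $|t|^{-1}$ decay already established in Theorem~\ref{thm:free}. Your write-up simply spells out the one integration by parts for the high-energy decay rather than citing Theorem~\ref{thm:free} directly, but the mechanism is identical.
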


\begin{proof}
	Note that the extra power of $\la D_0\ra^{-1}$ is needed to ensure that the integral
	$$
	\bigg|\int_{\R} e^{-it\lambda} \widetilde \chi(\lambda) \la \lambda\ra^{-3-} [\mR_0^+-\mR_0^-](\lambda, x,y) \,d\lambda \bigg|, 
	$$
	converges absolutely, using $|\mu(\lambda)(x,y)|\les |\lambda|^2$.  This, combined with Theorem~\ref{thm:free} suffices to prove the claim.
\end{proof}

\section{Low Energy Resolvent Expansions and Estimates}\label{sec:low expansions}

In this section we show that, under mild decay assumptions on the potential $V$, the low energy evolution of the perturbed solution operator obeys the same bounds as the free evolution.  We do this by developing expansions for the spectral measure $[\mR_V^+-\mR_V^-](\lambda)$ in a neighborhood of the threshold to use in the Stone's formula, \eqref{eqn:stone}.  In this low energy regime, we treat $\mR_V^\pm$ as a perturbation of the free resolvent $\mR_0^\pm$, for which we obtain explicit asymptotic formulas.

To obtain expansions for the perturbed resolvent operators $\mathcal{R}^{\pm}_V(\lambda)$,  we recall the symmetric resolvent identity. As in previous analyses of the Dirac Equation, \cite{egd,EGT,EGT2,EGG2}, using that $ V \colon \R^3 \to \C^{4\times 4}$ is self-adjoint, the spectral theorem for self-adjoint matrices allows us to write 
\begin{gather*}
     V= B^{*} \Lambda B= B^{*}  |\Lambda|^{\f12} U |\Lambda|^{\f12} B =: v^{*} U v,\,\,\,\text{ where} \\
    \Lambda=\text{diag}(\lambda_1,\lambda_2,\lambda_3,\lambda_4), \text{ with }\lambda_j \in \R, \\
    |\Lambda|^{\f12}= \text{diag}(|\lambda_1|^{\f12},|\lambda_2|^{\f12},|\lambda_3|^{\f12},|\lambda_4|^{\f12}),\\
    U= \text{diag}(\text{sign}(\lambda_1), \text{sign}(\lambda_2),\text{sign}(\lambda_3),\text{sign}(\lambda_4)).
\end{gather*}
We note that if the entries of $V(x)$ are all bounded by $\la x\ra^{-\delta}$, then the entries of $v(x)$ and $v^*(x)$ are bounded by $\la x\ra^{-\delta/2}$.
Defining $ M^\pm(\lambda) = U + v\mathcal{R}^\pm_0(\lambda) v^*$,  the symmetric resolvent identity yields
\begin{align}\label{eq:pert1}
	\mathcal{R}^{\pm}_V(\lambda) =\mathcal{R}^{\pm}_0(\lambda) - \mathcal{R}^{\pm}_0(\lambda) v^{*} (M^{\pm})^{-1} (\lambda) v \mathcal{R}^{\pm}_0(\lambda). 
\end{align} 
We consider $M^\pm(\lambda)$ as a perturbation of $M^\pm(0) = U + v\mR_0^\pm(0)v^* = U + v\mG_0v^*$.  We denote this operator by $T_0 \coloneqq M^\pm(0)$.

We recall the expansion of the free resolvent for the Schr\"{o}dinger operator in $\R^3$, see \cite{ES} for example, which has
\begin{align*}
	R_0^\pm(\lambda^2)(x,y) = \frac{e^{\pm i\lambda |x-y|}}{4\pi |x-y|} = \sum_{j=0}^J (\pm i\lambda)^jG_j +o (\lambda^J),
\end{align*}
where we define the scalar-valued operators $G_j = \frac{1}{4\pi j!} |x-y|^{j-1}$.  To write expansions for the Dirac resolvent, we use \eqref{eqn:resolvdef} and note that
\begin{align}\label{eqn:R0 ugly}
	\mR_0^\pm(\lambda)(x,y)
	= (-i \alpha \cdot \nabla + \lambda)\!\left(\frac{e^{\pm i \lambda |x-y|}}{4\pi|x - y|}\right)\!
	= \frac{e^{\pm i\lambda|x-y|}}{4\pi|x-y|}\left(\alpha \cdot \hat{e} \left(\pm \lambda + \frac{i}{|x - y|}\right) + \lambda\right).
\end{align}
Recall $\hat{e}:=(x - y)/|x-y|$ is the unit vector in the $x-y$ direction. This directly gives the bounds
\begin{align}\label{eqn:R0 bounds}
	|\mR_0^\pm(\lambda,x,y)|
	\lesssim \frac{1}{|x - y|^2} + \frac{|\lambda|}{|x - y|}
	\qquad \t{and} \qquad
	|\p_\lambda \mR_0^\pm(\lambda,x,y)|
	\lesssim \frac{1}{|x - y|} + |\lambda|.
\end{align}
Now, we define operators $\mG_j$ in terms of their integral kernels:
\begin{align}\label{eqn:Gj def}
	\mG_0(x,y) = \frac{i \alpha \cdot \hat{e}}{4\pi|x-y|^2},
	\qquad 
	\mG_1(x,y) = \frac{1}{4\pi|x-y|},
	\qquad 
	\mG_2^\pm(x,y) = \pm \frac{1}{4\pi} + \frac{\alpha \cdot \hat{e}}{8\pi}.
\end{align}
Recall that these kernels are matrix-valued operators. $\mG_1$ and $\mG_2^\pm$ are $4 \times 4$  matrices, since $\alpha\cdot \hat{e} = \sum_{j = 1}^3 \alpha_j e_j$ is a $4\times 4$ matrix.  Further, we note that $\mG_1(x,y) = (-\Delta)^{-1}(x,y) \mathbbm 1_{\mathbb C^4}$.

\begin{lemma}\label{lem:R0 expansions}
	We have the following expansions  for the Dirac free resolvents:
	\begin{align*}
		\mR_0^\pm
		& = \mG_0 + \lambda \mG_1+ \mathcal E_1^\pm(\lambda,|x-y|)\\
		& = \mG_0 + \lambda \mG_1+i\lambda^2\mG_2^\pm +\mathcal E_2^\pm(\lambda,|x-y|),
	\end{align*}
	where 
	\begin{align*}
		|\mE_1^\pm(\lambda,|x-y|)| \lesssim \frac{|\lambda|}{|x-y|}, && 
		|\p_\lambda\mE_1^\pm(\lambda,|x-y|)| \lesssim  |\lambda|, \quad |\p_\lambda^2\mE_1^\pm(\lambda,|x-y|)| \lesssim 1+|\lambda|\,|x-y|.
	\end{align*}	
	Further, for any choice of $0\leq \ell \leq 1$, we have
	\begin{gather*}
	    |\mE_2^\pm(\lambda,|x-y|)| \lesssim |\lambda|^2(|\lambda|\,|x-y|)^\ell, \qquad  
		|\p_\lambda\mE_2^\pm(\lambda,|x-y|)| \lesssim  |\lambda|(|\lambda|\,|x-y|)^\ell, \\ |\p_\lambda^2\mE_2^\pm(\lambda,|x-y|)| \lesssim 1+|\lambda|\,|x-y|.\nn
	\end{gather*}
	
\end{lemma}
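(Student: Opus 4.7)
The plan is to work directly from the closed form \eqref{eqn:R0 ugly}.  Writing $r=|x-y|$ and distributing the matrix factor,
\begin{align*}
    \mR_0^\pm(\lambda)(x,y) = \frac{i(\alpha\cdot\hat e)\,e^{\pm i\lambda r}}{4\pi r^2} + \frac{\lambda(\pm\alpha\cdot\hat e+1)\,e^{\pm i\lambda r}}{4\pi r},
\end{align*}
so $\mR_0^\pm$ is an entire function of $\lambda$ with two additive pieces, each weighted by $e^{\pm i\lambda r}$.  The first step is to Taylor expand $e^{\pm i\lambda r}$ in powers of $\lambda r$ and collect by powers of $\lambda$.  Setting $e^{\pm i\lambda r}=1$ in both summands recovers $\mG_0$ at zeroth order and produces $\lambda(\pm\alpha\cdot\hat e+1)/(4\pi r)$ at first order; including the next term $\pm i\lambda r$ in the first summand contributes $\mp\lambda(\alpha\cdot\hat e)/(4\pi r)$, which exactly cancels the $\pm\alpha\cdot\hat e$ piece of the linear contribution and leaves the scalar $\lambda\mG_1$.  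Continuing one more order and collecting the $\lambda^2$ pieces yields the coefficient $\frac{i\alpha\cdot\hat e}{8\pi}\pm\frac{i}{4\pi}=i\mG_2^\pm$ as defined in \eqref{eqn:Gj def}.

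For the remainders $\mE_j^\pm$ the key tool is the elementary estimate
\begin{align*}
    \bigg|e^{i\theta}-\sum_{k=0}^{n}\frac{(i\theta)^k}{k!}\bigg|\les \min(|\theta|^{n+1},|\theta|^n),
\end{align*}
applied with $\theta=\pm\lambda r$.  For $\mE_1^\pm$ with $n=1$ in the first summand and $n=0$ in the second, the regime $|\lambda|r\leq 1$ yields $\lambda^2$ while the regime $|\lambda|r\geq 1$ yields $|\lambda|/r$; both are dominated by $|\lambda|/r$ since $\lambda^2=(|\lambda|/r)(|\lambda|r)$.  For $\mE_2^\pm$, raising $n$ by one gives $|\mE_2^\pm|\les\min(|\lambda|^3 r,\lambda^2)$, and since $\min(A,B)\leq A^{1-\ell}B^\ell$ one obtains $|\mE_2^\pm|\les |\lambda|^2(|\lambda|r)^\ell$ for every $0\leq \ell\leq 1$.

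For the derivative bounds I would differentiate \eqref{eqn:R0 ugly} directly and subtract the exact derivatives of $\mG_0+\lambda\mG_1$ or $\mG_0+\lambda\mG_1+i\lambda^2\mG_2^\pm$.  The resulting expressions reduce to combinations of $(e^{\pm i\lambda r}-1)/r$, $\lambda e^{\pm i\lambda r}$, and, for the second derivative, $\lambda r\,e^{\pm i\lambda r}$; the same Taylor estimates then deliver the claimed $|\lambda|$ and $|\lambda|(|\lambda|r)^\ell$ bounds on $\partial_\lambda\mE_j^\pm$ and the $1+|\lambda|r$ bound on $\partial_\lambda^2\mE_j^\pm$.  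The extra factor of $r$ appearing in the second-derivative bound is produced by the chain rule each time a $\partial_\lambda$ lands on the exponential.

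The main obstacle is bookkeeping rather than analysis: one must track how the two summands of $\mR_0^\pm$ combine so that the $\alpha\cdot\hat e$ contributions cancel at first order (leaving the purely scalar $\mG_1$) but reappear at second order together with a $\pm$-dependent scalar piece assembling into $\mG_2^\pm$.  Once this cancellation structure is recorded, the remainder bounds are an immediate consequence of the two regime estimates together with the interpolation inequality used for $\mE_2^\pm$.
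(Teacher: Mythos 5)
Your proposal is correct and follows essentially the same route as the paper: Taylor expansion of the explicit kernel \eqref{eqn:R0 ugly} in $\lambda|x-y|$, two-regime estimates for $|\lambda||x-y|\lessgtr 1$ (packaged in your $\min(|\theta|^{n+1},|\theta|^n)$ bound), and interpolation to get the $(|\lambda||x-y|)^\ell$ factor, with derivatives handled by noting each $\partial_\lambda$ on the exponential costs a factor of $|x-y|$. The only cosmetic point is that your interpolation $A^{1-\ell}B^{\ell}$ produces the exponent $1-\ell$ rather than $\ell$, which is harmless since $\ell$ ranges over all of $[0,1]$.
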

\begin{proof}
	Using the Taylor expansion of $\mR_0^\pm(\lambda)(x,y)$ in $|\lambda||x-y|$, we see
	\begin{align*}
		\mR_0^\pm(\lambda)(x,y)
		& = (-i \alpha \cdot \nabla + \lambda)\left(\frac{e^{\pm i \lambda |x-y|}}{4\pi|x - y|}\right) \\
		& = \frac{i \alpha \cdot \hat{e}}{4\pi|x-y|^2} + \frac{\lambda}{4\pi|x-y|} \pm \frac{i\lambda^2}{8\pi} + \frac{i \lambda^2 \alpha \cdot \hat{e}}{8\pi} 
		+ \mathcal{E}_2^{\pm}(\lambda,|x-y|)\\
		&=\mG_0(x,y)+\lambda \mG_1(x,y)+i\lambda^2 \mG_2^\pm(x,y)+  \mathcal{E}_2^{\pm}(\lambda,|x-y|).
	\end{align*}
	where $\mathcal{E}_2^\pm$ may be differentiated freely, with differentiation comparable to division by $\lambda$.
	
	Define $\mE_1^\pm(\lambda,|x-y|)=\mR_0^\pm(\lambda)(x,y)-\mG_0(x,y)-\lambda \mG_1(x,y)$.  Denote $r:=|x-y|$, the Taylor expansion about $\lambda r=0$ gives the following bounds when $|\lambda|r < 1$:
	\begin{align}\label{eqn:otherE0}
		|\mE_1^\pm(\lambda, r)| \lesssim  |\lambda|^2,
		\qquad |\p_\lambda\mE_1^\pm(\lambda, r)| \lesssim  |\lambda| ,
		\qquad |\p_\lambda^2\mE_1^\pm(\lambda, r)| \lesssim 1.
	\end{align}
	For large $|\lambda|r$, using \eqref{eqn:R0 ugly} when $|\lambda|r\gtrsim 1$, we see that 
	$$
	\mR_0^\pm (\lambda)(x,y)-\mG_0 =\frac{e^{\pm i\lambda r}}{4\pi r} (\pm \lambda \alpha \cdot \hat{e}+\lambda) + \frac{\alpha \cdot \hat{e}}{4\pi} \bigg(\frac{e^{\pm i\lambda r}-1}{r^2} \bigg) .  
	$$
	Using that $|e^{\pm i\lambda r}-1|\les |\lambda| r$, we see that this piece may be bounded in modulus by $|\lambda| r^{-1}$, as can the contribution of $\lambda \mG_1$.
	For derivatives, we have that differentiation is bounded by multiplication by $r$, so that
	\begin{align}\label{eq:resolv high derivs}
		|\p_\lambda^k\mR_0^\pm(\lambda)(x,y)|
		&\les \bigg(\frac{|\lambda|}{r} +\frac{1}{r^2}\bigg) r^k
		\les  |\lambda| r^{k-1}+r^{k-2}.
	\end{align}
	Now explicitly writing $\mE_1^\pm(\lambda,r)$, we have
	$      |\mE_1^\pm(\lambda, r)| =  |\mR_0^\pm(\lambda,x,y)-\mG_0(x,y)-\lambda \mG_1(x,y) | \les |\lambda|r^{-1}$.
	Since $\mG_0$ is $\lambda$ independent, derivatives are controlled by \eqref{eq:resolv high derivs}.  So that, when $|\lambda|r\gtrsim 1$ we have
	$$
	|\p_\lambda^k\mE_1^\pm(\lambda, r)| \les |\lambda|r^{k-1}+r^{k-2}, \qquad k=0,1,2.
	$$
	The bounds on $|\lambda| r\gtrsim 1$ may be freely multiplied by positive powers of $|\lambda|r$, while the bounds in \eqref{eqn:otherE0} my be divided by powers of $|\lambda |r$ to obtain the bounds in the claim.
	
	The argument for $\mathcal E_2^\pm(\lambda,x,y)$ proceeds similarly noting that 
	$\mathcal E_2^\pm(\lambda,x,y):=\mathcal E_1^\pm(\lambda,x,y)-i\lambda^2 \mG_2^\pm(x,y)$.  When $|\lambda|r<1$, the Taylor expansion gives an error of size $|\lambda|^3 r$, which may be differentiated freely.  When $|\lambda |r>1$, we note that
	$$
	|\partial_\lambda^k (i\lambda^2 \mG_2^\pm(x,y))| \les \lambda^{2-k}, \qquad k = 0, 1, 2.
	$$
	Combining this with the estimates for $\mathcal E_1^\pm(\lambda,r)$ suffices to prove the claim.
\end{proof}

These bounds may be used to obtain Lipschitz bounds on the error term.

\begin{lemma}\label{lem:R0 error Lip}
	Let $|\lambda_1|\leq |\lambda_2| \leq 1$, then we have the following Lipschitz bounds for the error terms.  For any choice of $0\leq \gamma \leq 1$ we have:
	\begin{align*}
		|\mathcal{E}_1^\pm(\lambda_2,|x-y|)-\mathcal{E}_1^\pm(\lambda_1,|x-y|)|
		& \les |\lambda_2-\lambda_1|^{\gamma}\bigg(\frac{|\lambda_2|}{|x-y|^{1-\gamma}}\bigg), \\
		|\p_\lambda \mathcal{E}_1^\pm(\lambda_2,|x-y|)-\p_\lambda \mathcal{E}_1^\pm(\lambda_1,|x - y|)|
		& \les |\lambda_2-\lambda_1|^{\gamma}|\lambda_2|^{1-\gamma}(1 + |x-y|^{\gamma}).
	\end{align*}
	Similarly, for any $0\leq \ell \leq 1$
	\begin{align*}
		|\mathcal{E}_2^\pm(\lambda_2,|x-y|)-\mathcal{E}_2^\pm(\lambda_1,|x-y|)| 
		& \les |\lambda_2-\lambda_1|^{\gamma} |\lambda_2|^{1+\gamma+\ell}|x-y|^\ell, \\
		|\p_\lambda \mathcal{E}_2^\pm(\lambda_2,|x-y|)-\p_\lambda \mathcal{E}_2^\pm(\lambda_1,|x-y|)| 
		& \les|\lambda_2-\lambda_1|^{\gamma}|\lambda_2|^{(1+\ell)(1-\gamma)}(1+|x-y|^{\gamma+\ell(1-\gamma)}).
	\end{align*}
\end{lemma}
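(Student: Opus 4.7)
The plan is to prove each of the four estimates by an elementary interpolation between the triangle inequality and the mean value theorem.  For any of the functions $f \in \{\mathcal{E}_1^\pm,\ \p_\lambda \mathcal{E}_1^\pm,\ \mathcal{E}_2^\pm,\ \p_\lambda \mathcal{E}_2^\pm\}$, with $I$ the interval from $\lambda_1$ to $\lambda_2$, we have the two extreme bounds
\begin{align*}
    |f(\lambda_2) - f(\lambda_1)| \leq 2\sup_{s \in I} |f(s)|, \qquad
    |f(\lambda_2) - f(\lambda_1)| \leq |\lambda_2 - \lambda_1|\sup_{s \in I} |f'(s)|.
\end{align*}
Calling these $A$ and $B$ respectively, the elementary inequality $\min(A,B) \leq A^{1-\gamma} B^{\gamma}$, valid for $\gamma \in [0,1]$, immediately produces a H\"older-type bound carrying the desired $|\lambda_2 - \lambda_1|^\gamma$ factor.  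All the suprema that appear are controlled by the pointwise estimates of Lemma~\ref{lem:R0 expansions}, using $|\lambda_1| \leq |\lambda_2|$ to replace $\sup_{s \in I}|s|^k$ by $|\lambda_2|^k$.

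To illustrate, for $f = \mathcal{E}_1^\pm$ we use $|f(s,r)| \les |s|/r$ and $|f'(s,r)| \les |s|$ from Lemma~\ref{lem:R0 expansions}, so that
\begin{align*}
    |\mathcal{E}_1^\pm(\lambda_2,r) - \mathcal{E}_1^\pm(\lambda_1,r)|
    \les \left(\frac{|\lambda_2|}{r}\right)^{1-\gamma} (|\lambda_2 - \lambda_1|\,|\lambda_2|)^\gamma
    = |\lambda_2 - \lambda_1|^\gamma \frac{|\lambda_2|}{r^{1-\gamma}},
\end{align*}
which is the first claimed estimate.  The three remaining cases are handled identically: for $\p_\lambda \mathcal{E}_1^\pm$ and $\p_\lambda \mathcal{E}_2^\pm$ the MVT uses the second-derivative bounds $|\p_\lambda^2 \mathcal{E}_j^\pm| \les 1 + |\lambda|\,r$ from Lemma~\ref{lem:R0 expansions}, while for $\mathcal{E}_2^\pm$ and its derivative we carry the free parameter $\ell \in [0,1]$ through the interpolation, inserting the same $\ell$ into both the triangle and MVT bounds so that the exponent on $|x-y|$ reads off cleanly.

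The only real bookkeeping lies in tidying up the resulting $|x-y|$ factors.  Whenever the MVT bound contains $1 + |\lambda_2|\,r$, raising it to the $\gamma$ power and applying $(1+u)^\gamma \leq 1 + u^\gamma$ together with $|\lambda_2|^\gamma \leq 1$ yields a clean $1 + r^\gamma$ (or $1 + r^{\gamma + \ell(1-\gamma)}$ in the $\mathcal{E}_2^\pm$ derivative bound) after one further use of $r^a \leq 1 + r^b$ for $a \leq b$ to absorb any leftover lower-order power of $r$.  Once the pointwise bounds of Lemma~\ref{lem:R0 expansions} are in place the argument is mechanical; the main thing to watch is that the two base inequalities be chosen so that the final power on $|x-y|$ matches the target in each of the four displays.
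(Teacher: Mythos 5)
Your proof takes essentially the same route as the paper's: interpolate between the triangle-inequality bound and the mean-value-theorem bound, using the pointwise estimates of Lemma~\ref{lem:R0 expansions} and $|\lambda_1|\le|\lambda_2|$, which is exactly the paper's argument for $\mathcal E_1^\pm$ (the paper then dismisses $\mathcal E_2^\pm$ as ``similar, but simpler''). One bookkeeping note: for the undifferentiated $\mathcal E_2^\pm$ estimate your interpolation actually yields $|\lambda_2|^{2+\ell-\gamma}$ rather than the stated $|\lambda_2|^{1+\gamma+\ell}$, which is the stronger bound precisely when $\gamma\le 1/2$ --- the only regime invoked later --- so this matches what the paper's own (omitted) computation would give.
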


\begin{proof}
	We prove the claim for $\mathcal E_1^\pm$, the proof for $\mathcal E_2^\pm$ is similar, but simpler.  The proof is independent of the $\pm$, so we drop the superscript.
	We begin by bounding $|\mathcal{E}_1(\lambda_1,|x-y|)-\mathcal{E}_1(\lambda_2,|x-y|)|$. We use the triangle inequality and Lemma~\ref{lem:R0 expansions} to obtain
	\begin{equation*}
		|\mathcal{E}_1(\lambda_1, |x-y|) - \mathcal{E}_1(\lambda_2, |x-y|)|
		\les \frac{|\lambda_2|}{|x-y|},
	\end{equation*}
	Using the mean value theorem, we obtain the alternative bound
	\begin{align*}
		|\mathcal{E}_1(\lambda_1,|x-y|)-\mathcal{E}_1(\lambda_2,|x-y|)| 
		& \les |\lambda_2-\lambda_1|\sup_{\lambda \in I} \left|\p_\lambda \mathcal{E}_1(\lambda,|x-y|)\right| 
		\les |\lambda_2-\lambda_1||\lambda_2|.
	\end{align*}
	Here $I$ is the interval $[\min(\lambda_1,\lambda_2), \max(\lambda_1,\lambda_2)]$.
	Via interpolation, we obtain
	\begin{equation*}
		|\mathcal{E}_1(\lambda_1,|x-y|)-\mathcal{E}_1(\lambda_2,|x-y|)|\les |\lambda_2-\lambda_1|^{\gamma}\frac{|\lambda_2|}{|x-y|^{1-\gamma}} .
	\end{equation*}
	We use the same approach for $\p_{\lambda} \mathcal{E}_1(\lambda,|x-y|)$. These give us the two bounds
	\begin{align*}
		|\p_\lambda \mathcal{E}_1(\lambda_1,|x-y|) - \p_\lambda \mathcal{E}_1(\lambda_2,|x-y|)| & \les |\lambda_2| \\
		|\p_\lambda \mathcal{E}_1(\lambda_1,|x-y|)-\p_\lambda \mathcal{E}_1(\lambda_2,|x-y|)| & \les  |\lambda_2-\lambda_1|\sup_{\lambda \in I} \left|\p_\lambda^2 \mathcal{E}_1(\lambda,|x-y|)\right|\\
		&\les |\lambda_2-\lambda_1|(1+|\lambda_2||x-y|).
	\end{align*}
	We can then interpolate between these bounds to obtain
	\begin{align*}
		|\p_\lambda \mathcal{E}_1(\lambda_1,x,y)-\p \lambda \mathcal{E}_1(\lambda_2,x,y)| & \les |\lambda_2-\lambda_1|^{\gamma}(1+|\lambda_2||x-y|)^{\gamma}|\lambda_2|^{1-\gamma}\\
		& \les |\lambda_2-\lambda_1|^{\gamma}(|\lambda_2|^{1-\gamma}+|\lambda_2||x-y|^{\gamma}). 
	\end{align*}
\end{proof}

Combining Lemma~\ref{lem:R0 error Lip} with the first expansion in Lemma~\ref{lem:R0 expansions}, since the first term is independent of $\lambda$ we have
$$
\mR_0^\pm(\lambda_2)(x,y)-\mR_0^\pm(\lambda_1)(x,y)=(\lambda_2-\lambda_1)\mG_1+\mathcal{E}_1^\pm (\lambda_2,|x-y|)-\mathcal{E}_1^\pm(\lambda_1,|x-y|).
$$
We note that the $\mG_1$ terms cancel in the difference of the derivatives.  From this we can see
\begin{corollary}\label{cor:Lips R0}
	Lipschitz bounds for $\mathcal E_1^\pm(\lambda,|x-y|)$  also hold for the Dirac resolvent.  Namely, for $|\lambda_1|\leq |\lambda_2|\leq 1$ we have
	\begin{align*}
		|\mR_0^\pm(\lambda_2)(x,y)-\mR_0^\pm(\lambda_1)(x,y)| 
        & \les \frac{|\lambda_2-\lambda_1|}{|x-y|}+|\lambda_2-\lambda_1|^{\gamma}\bigg(\frac{|\lambda_2|}{|x-y|^{1-\gamma}}\bigg).  \\
		|\p_\lambda \mathcal{R}^\pm_0(\lambda_2)(x,y)-\p_\lambda \mathcal{R}^\pm_0(\lambda_1)(x,y)| 
        & \les|\lambda_2-\lambda_1|^{\gamma}|\lambda_2|^{1-\gamma}(1+|x-y|^{\gamma}).
	\end{align*}
\end{corollary}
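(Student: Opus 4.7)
The proof plan is essentially a direct bookkeeping exercise combining the two preceding results, and indeed the sentences immediately before the corollary already sketch the idea. I would start from the first expansion in Lemma~\ref{lem:R0 expansions}, namely
\[
\mR_0^\pm(\lambda)(x,y) = \mG_0(x,y) + \lambda\, \mG_1(x,y) + \mE_1^\pm(\lambda,|x-y|),
\]
and subtract the analogous identity at $\lambda_1$. The term $\mG_0$ drops out because it is $\lambda$-independent, and the $\lambda \mG_1$ piece contributes exactly $(\lambda_2-\lambda_1)\mG_1(x,y)$. Since the integral kernel of $\mG_1$ is $\frac{1}{4\pi|x-y|}$, this contribution is bounded in absolute value by $|\lambda_2-\lambda_1|/|x-y|$. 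The remaining contribution is $\mE_1^\pm(\lambda_2,|x-y|)-\mE_1^\pm(\lambda_1,|x-y|)$, to which I apply the first bound of Lemma~\ref{lem:R0 error Lip}. Adding the two estimates by the triangle inequality gives the first inequality of the corollary.

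For the second estimate, I would differentiate the expansion in $\lambda$ first and then subtract. Here $\partial_\lambda \mR_0^\pm(\lambda)(x,y) = \mG_1(x,y) + \partial_\lambda \mE_1^\pm(\lambda,|x-y|)$, and the $\lambda$-independent term $\mG_1$ now cancels entirely when I form the difference at $\lambda_2$ and $\lambda_1$. Consequently
\[
\partial_\lambda \mR_0^\pm(\lambda_2)(x,y) - \partial_\lambda \mR_0^\pm(\lambda_1)(x,y) = \partial_\lambda \mE_1^\pm(\lambda_2,|x-y|) - \partial_\lambda \mE_1^\pm(\lambda_1,|x-y|),
\]
and the claimed bound follows immediately from the second inequality of Lemma~\ref{lem:R0 error Lip}.

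There is no real obstacle here; the only thing to check carefully is the cancellation structure — that $\mG_0$ disappears in the zeroth-order difference and that $\mG_1$ disappears after differentiation — so that the Dirac resolvent inherits exactly the Lipschitz behavior of the error term $\mE_1^\pm$, plus a single explicit $|\lambda_2-\lambda_1|/|x-y|$ contribution in the undifferentiated case.
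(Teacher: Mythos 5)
Your proposal is correct and follows exactly the paper's argument: the paper also subtracts the first expansion from Lemma~\ref{lem:R0 expansions}, observes that $\mG_0$ cancels (and that $\mG_1$ cancels in the difference of derivatives), bounds the $(\lambda_2-\lambda_1)\mG_1$ contribution by $|\lambda_2-\lambda_1|/|x-y|$, and invokes the Lipschitz bounds of Lemma~\ref{lem:R0 error Lip} for the error terms. Nothing is missing.
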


To control the evolution of the perturbed solution operator, we must distinguish between the cases when zero energy is regular and exceptional.  To that end, we make the following definition, which is similar to the definitions for the massive cases \cite{egd,EGT,EGT2,egd1} and massless case \cite{EGG2} respectively.  These had roots in earlier works on the Schr\"odinger operators such as \cite{ES,eg2,eg3}.
\begin{defin}\label{def:resonances}
	
	We make the following definitions that characterize zero energy obstructions.
	\begin{enumerate}[i)]
		\item We define zero energy to be regular if $T_0=M^\pm (0)$ is invertible on $L^2(\R^3)$.  
		
		\item 	We say that zero is not regular   if $T_0$ is not invertible on $L^2(\R^3)$. In this case we define $S_1$ is the Riesz projection onto the kernel of $T_0$, so that $T_0+S_1$ is invertible on $L^2(\R^3)$.  We show, in Section~\ref{sec:thresholds}, the connection of zero energy not being regular  to the existence of zero energy eigenvalues of $\mathcal H=D_0+V$.
		
		\item Noting that $v\mG_{0}v^*$ is compact and self-adjoint, it follows that  $T_0=U+v\mG_{0}v^*$ is  a compact perturbation of $U$.  Since the spectrum of $U$ is in $\{\pm 1\}$, zero is an isolated point of the spectrum of $T_0$ and the kernel is a finite-dimensional subspace of $L^2(\R^3)$.  It then follows that $S_1$ is a finite rank projection.
		
	\end{enumerate}
	
\end{defin}

We employ the following terminology from \cite{Sc2,eg2,eg3}:
\begin{defin}
	We say an operator $T \colon L^2(\R^3) \to L^2(\R^3)$ with kernel
	$T(\,\cdot\,,\,\cdot\,)$ is absolutely bounded if the operator with kernel
	$|T(\,\cdot\,,\,\cdot\,)|$ is bounded from $L^2(\R^3)$ to $ L^2(\R^3)$. 	
\end{defin}
Recall the Hilbert-Schmidt norm of an operator $T$ with integral kernel $T(x,y)$ is defined by
$$
\|T\|_{HS}^2=\int_{\mathbb R^6} |T(x,y)|^2\, dx\, dy.
$$
Recall that Hilbert-Schmidt and finite-rank operators are absolutely bounded.

We now proceed to building expansions for the operators $M^\pm(\lambda)$ when zero energy is regular.  The following estimate is used frequently.

\begin{lemma}\label{lem:spatial estimates}
	Fix $x,y\in \R^n$, with $0 \leq k, \ell < n$, $\delta > 0$, $k + \ell + \delta \geq n$, $k + \ell \neq n$:
	\begin{align*}
		\int_{\R^n} \frac{\ang{z}^{-\delta-}}{|z-x|^k |y-z|^\ell} dz 
		\lesssim
		\begin{cases}
			|x - y|^{-\max \set{0, k + \ell - n}}                & \t{if } |x-y|   <  1; \\
			|x - y|^{-\min \set{k, \ell, k + \ell + \delta - n}} & \t{if } |x-y| \geq 1.
		\end{cases} 
	\end{align*}
	Consequently, we have that
	\begin{align}\label{eqn:R3kell}
		\int_{\R^n} \frac{\ang{z}^{-\delta-}}{|z-x|^k |y-z|^\ell} dz 
		\lesssim \frac{1}{|x-y|^p}
	\end{align}
	where we may take any $p\in [\max \{0, k + \ell - n\},\min \{k, \ell, k + \ell + \delta - n\}]$ as desired. 
\end{lemma}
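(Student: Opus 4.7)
The plan is to decompose $\R^n$ into regions tailored to the geometry of $x$ and $y$, estimate each piece using standard weighted singular integral bounds, and combine. Throughout I separate the regimes $|x-y| < 1$ and $|x-y| \geq 1$.

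For $|x-y| < 1$, the two singularities interact. I would localize to a fixed-size neighborhood $\Omega = \set{|z-x|\leq 2 \t{ or } |z-y|\leq 2}$ on which $\ang{z}^{-\delta-}$ is bounded, reducing the main contribution to $\int_\Omega |z-x|^{-k}|z-y|^{-\ell}\,dz$. A scaling change of variables $z = x + |x-y|w$ shows this is $\les |x-y|^{n-k-\ell}$ when $k+\ell>n$ and $\les 1$ when $k+\ell<n$, matching $|x-y|^{-\max\set{0,\,k+\ell-n}}$. On $\Omega^c$ one has $|z-x|\sim|z-y|\sim|z|$, and the integrand is controlled by $\ang{z}^{-\delta-}|z|^{-(k+\ell)}$, which is integrable at infinity by the assumption $k+\ell+\delta\geq n$.

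For $|x-y| \geq 1$, I would use the three-region partition
\begin{align*}
A = \set{|z-x|\leq |x-y|/2}, \quad B = \set{|z-y|\leq |x-y|/2}, \quad C = \R^n\setminus(A\cup B).
\end{align*}
In $A$, the triangle inequality gives $|z-y| \sim |x-y|$, so the contribution is $\les |x-y|^{-\ell}\int_A \ang{z}^{-\delta-}|z-x|^{-k}\,dz$. Splitting the sub-integral into $|z-x|\leq 1$ and $1<|z-x|\leq|x-y|/2$ and exploiting the weight yields the bound $|x-y|^{-\ell}$ when $k+\delta>n$, and $|x-y|^{-\ell+(n-k-\delta)-}$ when $k+\delta<n$; in both cases this is $\les |x-y|^{-\min\set{\ell,\,k+\ell+\delta-n}}$. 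Region $B$ is symmetric and contributes $|x-y|^{-\min\set{k,\,k+\ell+\delta-n}}$. In $C$, both singular factors are $\gtrsim |x-y|$, and I would further split at $|z|=|x-y|$: on $|z|\leq|x-y|$ the integrand is $\les |x-y|^{-k-\ell}\ang{z}^{-\delta-}$ whose integral is $\les |x-y|^{n-\delta-}$ when $\delta<n$ and bounded otherwise; on $|z|>|x-y|$ one has $|z-x|\sim|z-y|\sim|z|$ and the integrand $\les \ang{z}^{-\delta-}|z|^{-k-\ell}$, integrable by $k+\ell+\delta\geq n$ with total size $\les |x-y|^{n-k-\ell-\delta-}$. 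Both pieces produce $\les |x-y|^{-(k+\ell+\delta-n)-}$. Taking the worst of the three contributions yields $|x-y|^{-\min\set{k,\,\ell,\,k+\ell+\delta-n}}$.

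The main obstacle I anticipate is the careful bookkeeping in regions $A$ and $B$ when $\delta$ is close to the convergence threshold $n-k$ (or $n-\ell$), since then $\int \ang{z}^{-\delta-}|z-x|^{-k}\,dz$ fails to be uniformly bounded on the whole of $\R^n$ and one must leverage the restriction to the finite radius $|x-y|/2$ together with the fact that the weight still decays when $|x|$ itself is large; the sub-splits of the ball are designed to isolate the singularity near $x$ from the slow-decay tail. This is precisely where the quantity $k+\ell+\delta-n$ enters the minimum. The consequence \eqref{eqn:R3kell} then follows by interpolation: any exponent $p \in [\max\set{0,k+\ell-n},\,\min\set{k,\ell,k+\ell+\delta-n}]$ is at least as large as the small-$|x-y|$ exponent and at most the large-$|x-y|$ exponent, so the bound $|x-y|^{-p}$ dominates the case-by-case estimate uniformly in both regimes.
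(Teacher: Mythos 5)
Your overall strategy is sound, and it is genuinely different from the paper's: the paper disposes of the first claim by citing Lemma~6.3 of \cite{EG1} and only argues the (correct, and correctly reproduced by you) monotonicity step for \eqref{eqn:R3kell}, whereas you give a self-contained region decomposition. Your treatment of regions $A$ and $B$, and of the near region $\Omega$ when $|x-y|<1$, is correct, and you rightly flag that the weight must be exploited inside the ball of radius $|x-y|/2$ to produce the exponent $k+\ell+\delta-n$.

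However, there is a genuine gap in the two ``far'' regions. On $\Omega^c$ (and likewise on $C\cap\set{|z|>|x-y|}$) you assert $|z-x|\sim|z-y|\sim|z|$ and dominate the integrand by $\ang{z}^{-\delta-}|z|^{-(k+\ell)}$. The comparison $|z-x|\sim|z-y|$ is fine there, but $|z-x|\sim|z|$ is false when $x$ is far from the origin: e.g.\ $x=(100,0,\dots)$, $z=(102,0,\dots)$ gives $|z-x|=2$ while $|z|=102$, so $|z-x|^{-k}$ is \emph{much larger} than $|z|^{-k}$ and the claimed pointwise domination fails. Worse, in the $|x-y|<1$ case the proposed dominating integral $\int\ang{z}^{-\delta-}|z|^{-(k+\ell)}\,dz$ is not even finite near $z=0$ when $k+\ell\geq n$, so the step cannot be salvaged as stated. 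The conclusion of each step is nevertheless true: in the problematic sub-region one has $|z|\sim|x|$, so $\ang{z}^{-\delta-}\sim\ang{x}^{-\delta-}\les\ang{z-x}^{-\delta-}$ supplies exactly the decay you are missing. The repair is to split the far region further according to whether $|z-x|\les|x|$ or $|z-x|\gtrsim|x|$ (only in the latter is $|z-x|\gtrsim|z|$ legitimate), and in the former to trade $\ang{z}^{-\delta-}$ for $\ang{x}^{-\delta-}$ before integrating $|z-x|^{-(k+\ell)}$ over the relevant annulus; the hypothesis $k+\ell+\delta\geq n$ then closes the estimate. With that amendment your argument is complete.
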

Note that for $a, b, \varepsilon > 0$ and $\ell > k$, we have $a^{-k}b^{-\ell} \lesssim a^{-k}(b^{-\ell-\varepsilon} + b^{-\ell+\varepsilon})$, which allows for the application of Lemma \ref{lem:spatial estimates} when $k + \ell=n = 3$.
\begin{proof}
	The first claim is Lemma~6.3 in \cite{EG1}. The second claim follows by noting that $\min \{k, \ell, k + \ell + \delta - n\}\geq \max \{0, k + \ell - n\}$ noting that if $|x-y|<1$ selecting $p\geq \max \{0, k + \ell - n\}$ increases the upper bound, while if $|x-y|>1$ selecting $p\leq \min \{k, \ell, k + \ell + \delta - n\}$ also increases the upper bound.
\end{proof}

To handle the case when either $k$ or $\ell = 0$, we recall Lemma~3.8 in \cite{goldVis}, which we state in less generality:
\begin{lemma}\label{lem:GolVis}
	
	Suppose that $0\leq \delta,k<3$ with $k+\delta>3$, then
	$$
		\int_{\R^3}\frac{\la x\ra^{-\delta}}{|x-y|^k}\, dx \les \la y\ra^{3-k-\delta}.
	$$
	
\end{lemma}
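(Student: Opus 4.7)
The plan is to treat the integral as a convolution-type estimate and split the domain $\R^3$ into regions based on the relative sizes of $|x|$, $|y|$, and $|x-y|$. In each region one of the two factors in the integrand is essentially constant, allowing for an explicit computation of the other. The three hypotheses $k<3$, $\delta<3$, and $k+\delta>3$ should each be used exactly once, in a different region, to produce the same power $\la y\ra^{3-k-\delta}$.

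First, I would dispense with the case $|y|\leq 1$, where $\la y\ra^{3-k-\delta}\approx 1$ and it suffices to prove a uniform bound on the integral. Splitting into $|x-y|\leq 1$ and $|x-y|>1$, the local part is controlled by $\int_{|x-y|\leq 1}|x-y|^{-k}\,dx<\infty$ (using $k<3$), while on the tail $\la x\ra\approx\la x-y\ra$ and the integrand is bounded by $\la x-y\ra^{-k-\delta}$, which is integrable at infinity because $k+\delta>3$.

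For the main case, set $R:=|y|\geq 1$ and partition $\R^3$ into $A_1=\{|x|\leq R/2\}$, $A_2=\{|x-y|\leq R/2\}$, and $A_3=\R^3\setminus(A_1\cup A_2)$. On $A_1$, $|x-y|\geq R/2$ so $|x-y|^{-k}\les R^{-k}$, while $\int_{|x|\leq R/2}\la x\ra^{-\delta}\,dx\les R^{3-\delta}$ (using $\delta<3$), giving the desired $R^{3-k-\delta}$. On $A_2$, $|x|\approx R$ so $\la x\ra^{-\delta}\les R^{-\delta}$, and $\int_{|x-y|\leq R/2}|x-y|^{-k}\,dx\les R^{3-k}$ (using $k<3$), again yielding $R^{3-k-\delta}$. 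On $A_3$ both $|x|$ and $|x-y|$ exceed $R/2$; the triangle inequality gives $|x-y|\leq |x|+|y|\leq 3|x|$, so the integrand is bounded by $|x|^{-k-\delta}$, and polar integration over $|x|\geq R/2$ produces $R^{3-k-\delta}$ precisely because $k+\delta>3$.

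The main obstacle is bookkeeping rather than any single delicate estimate. The balance of the three conditions is essential: failure of $k<3$ or $\delta<3$ would cause divergence on $A_2$ or $A_1$ respectively, while failure of $k+\delta>3$ would make the far region $A_3$ diverge. No cancellation is exploited; the result is simply the sharp outcome of summing three elementary regional contributions.
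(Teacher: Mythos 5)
The paper offers no proof of this lemma at all --- it is quoted directly from Lemma~3.8 of \cite{goldVis} --- so your self-contained regional argument is a genuinely different (and welcome, more elementary) route. Its structure is sound: the preliminary case $|y|\leq 1$ is handled correctly, and on $A_1$ and $A_2$ the frozen-factor computations using $\delta<3$ and $k<3$ respectively are exactly right, each producing $R^{3-k-\delta}\approx\la y\ra^{3-k-\delta}$. The one flaw is on $A_3$: you invoke $|x-y|\leq |x|+|y|\leq 3|x|$ to bound the integrand by $|x|^{-k-\delta}$, but that inequality runs the wrong way --- it yields $|x-y|^{-k}\gtrsim |x|^{-k}$, a \emph{lower} bound on the integrand. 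What you actually need is $|x-y|\gtrsim |x|$, and this does hold on $A_3$: there $|y|=R< 2|x-y|$, so the triangle inequality in the other direction gives $|x|\leq |x-y|+|y|\leq 3|x-y|$, hence $|x-y|^{-k}\leq 3^{k}|x|^{-k}$, and the polar integration $\int_{|x|\geq R/2}|x|^{-k-\delta}\,dx\approx R^{3-k-\delta}$ then goes through precisely because $k+\delta>3$. With that one line corrected the proof is complete, and your closing remarks about the sharpness of the three hypotheses are accurate.
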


\begin{lemma}\label{lem:M exp}
	Assume that $|V(x)|\les \la x\ra^{-\delta}$ for some $\delta>0$.
	We have the expansion
	\begin{align*}
		M^\pm(\lambda) = T_0+\lambda v\mG_1v^*+M_0^\pm(\lambda),
	\end{align*}
	where, for $\lambda$ in a neighborhood of zero, we have ${\| M_0(\lambda)\|}_{HS}<\infty$ provided $\delta>1$, ${\|\partial_\lambda M_0(\lambda)\|}_{HS}\les |\lambda|<\infty$ provided $\delta>3$, and ${\|\partial_\lambda^2 M_0(\lambda)\|}_{HS}<\infty$ provided $\delta > 5$. Furthermore, for any choice of $0\leq \gamma \leq 1$ and for $|\lambda_1|\leq |\lambda_2|\leq 1$, we have
	\begin{align*}
		{\|M_0^\pm(\lambda_2) -M_0^\pm(\lambda_1)\|}_{HS} \les |\lambda_2-\lambda_1|^\gamma,
		\qquad \text{provided } \delta>1+2\gamma.
	\end{align*}
	Furthermore, if $\delta > 3 + 2\gamma$ we have
	\begin{align*}
		{\|\p_\lambda M_0^\pm(\lambda_2) -\p_\lambda M_0^\pm(\lambda_1)\|}_{HS} \les |\lambda_2-\lambda_1|^\gamma.
	\end{align*}
	The Lipschitz bounds also hold for $M^\pm(\lambda)$ in place of $M_0^\pm(\lambda)$.  
\end{lemma}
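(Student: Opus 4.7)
The plan is to define $M_0^\pm(\lambda):=v\,\mathcal E_1^\pm(\lambda,|x-y|)\,v^*$, so that combining the first expansion of Lemma~\ref{lem:R0 expansions} with $M^\pm(\lambda)=U+v\mathcal R_0^\pm(\lambda)v^*$ and $T_0=U+v\mathcal G_0 v^*$ immediately yields the required decomposition $M^\pm(\lambda)=T_0+\lambda v\mathcal G_1 v^*+M_0^\pm(\lambda)$. Every assertion of the lemma then reduces to a Hilbert-Schmidt bound on an integral operator with kernel $v(x)\,g_\lambda(x,y)\,v^*(y)$, where $g_\lambda$ is one of $\mathcal E_1^\pm$, $\partial_\lambda \mathcal E_1^\pm$, $\partial_\lambda^2 \mathcal E_1^\pm$, or a Lipschitz difference of these controlled by Lemma~\ref{lem:R0 error Lip}. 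Since $|v(x)|^2\les \la x\ra^{-\delta}$, each HS norm squared is dominated by
$$\int_{\R^3}\int_{\R^3} \la x\ra^{-\delta}\,|g_\lambda(x,y)|^2\,\la y\ra^{-\delta}\, dx\,dy,$$
which I would estimate with Lemmas~\ref{lem:spatial estimates} and \ref{lem:GolVis}.

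For the three pointwise finiteness claims I would substitute the bounds $|\mathcal E_1^\pm|\les |\lambda|/|x-y|$, $|\partial_\lambda\mathcal E_1^\pm|\les |\lambda|$, and $|\partial_\lambda^2\mathcal E_1^\pm|\les 1+|\lambda|\,|x-y|$ from Lemma~\ref{lem:R0 expansions}. The first produces a double integral with the locally integrable singular weight $|x-y|^{-2}$, which is handled by Lemma~\ref{lem:GolVis}, possibly combined with the interpolated refinement $|\mathcal E_1^\pm|\les|\lambda|^{1+\theta}|x-y|^{\theta-1}$ (valid for any $\theta\in[0,1]$) to push $\delta$ toward the stated threshold. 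The derivative bounds reduce to integrals of the forms $\lambda^2\int\!\!\int\la x\ra^{-\delta}\la y\ra^{-\delta}\,dx\,dy$ and $\int\!\!\int\la x\ra^{-\delta}(1+|x-y|^2)\la y\ra^{-\delta}\,dx\,dy$, which are finite exactly for $\delta>3$ and $\delta>5$, with the explicit $|\lambda|$-powers carrying through to give the claimed $|\lambda|$-growth of $\partial_\lambda M_0^\pm$.

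For the Lipschitz bounds I would feed the estimates of Lemma~\ref{lem:R0 error Lip} into the same double integral. The zeroth-order difference $|\mathcal E_1^\pm(\lambda_2)-\mathcal E_1^\pm(\lambda_1)|\les|\lambda_2-\lambda_1|^{\gamma}|\lambda_2|\,|x-y|^{-(1-\gamma)}$ contributes the weight $|x-y|^{-2(1-\gamma)}$, costing $2(1-\gamma)$ units of local integrability and yielding the threshold $\delta>1+2\gamma$ after Lemma~\ref{lem:GolVis}; the derivative-level difference $|\partial_\lambda\mathcal E_1^\pm(\lambda_2)-\partial_\lambda\mathcal E_1^\pm(\lambda_1)|\les|\lambda_2-\lambda_1|^{\gamma}|\lambda_2|^{1-\gamma}(1+|x-y|^{\gamma})$ produces polynomial growth $(1+|x-y|^{2\gamma})$ inside the integrand, forcing the stronger threshold $\delta>3+2\gamma$ once one reduces to $\int\la x\ra^{2\gamma-\delta}\,dx<\infty$. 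The transfer to $M^\pm$ itself is then immediate from
$$M^\pm(\lambda_2)-M^\pm(\lambda_1)-\big[M_0^\pm(\lambda_2)-M_0^\pm(\lambda_1)\big]=(\lambda_2-\lambda_1)\,v\mathcal G_1 v^*,$$
since $v\mathcal G_1 v^*$ is Hilbert-Schmidt under the standing decay assumption and $|\lambda_2-\lambda_1|\leq|\lambda_2-\lambda_1|^{\gamma}$ in a bounded neighborhood of zero absorbs the linear factor; for the derivative Lipschitz bound, $\partial_\lambda(M^\pm-M_0^\pm)=v\mathcal G_1 v^*$ is $\lambda$-independent and drops out.

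The main bookkeeping obstacle I anticipate is managing the crossover between $|\lambda||x-y|\leq 1$ and $|\lambda||x-y|>1$ in the pointwise kernel bounds, and choosing the interpolation parameter $\theta\in[0,1]$ in $|\mathcal E_1^\pm|\les|\lambda|^{1+\theta}|x-y|^{\theta-1}$ so that the short-distance singularity integrates locally and the large-distance tail is controlled by the weights $\la x\ra^{-\delta}\la y\ra^{-\delta}$, all while matching the stated thresholds on $V$ sharply.
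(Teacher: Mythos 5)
Your proposal follows the paper's proof essentially verbatim: the paper likewise takes $M_0^\pm(\lambda)=v\,\mathcal E_1^\pm(\lambda)\,v^*$, feeds the pointwise and Lipschitz bounds of Lemmas~\ref{lem:R0 expansions} and \ref{lem:R0 error Lip} into the Hilbert--Schmidt double integral $\int\la x\ra^{-\delta}|g_\lambda(x,y)|^2\la y\ra^{-\delta}\,dx\,dy$, controls it with Lemma~\ref{lem:spatial estimates}, and transfers the bounds to $M^\pm$ exactly as you do, using that $(\lambda_2-\lambda_1)v\mathcal G_1v^*$ is the only extra term and that $|\lambda_2-\lambda_1|\les|\lambda_2-\lambda_1|^\gamma$ near zero. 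There is no substantive difference between your route and the paper's.
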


\begin{proof}
	Recall the definition of $M^\pm$ as well as $\mR_0^\pm$
	\begin{align*}
		M^\pm (\lambda)
		= U + v \mR_0^\pm (\lambda) v^* 
		= U + v \mG_0 v^*+\lambda v\mG_1 v^* + v \mE_1^\pm(\lambda) v^*
		= U + v \mG_0 v^* + M_0^\pm(\lambda).
	\end{align*}
	To compute the Hilbert-Schmidt norms, using \eqref{eqn:Gj def} and Lemma~\ref{lem:R0 expansions}, we have
	\begin{align*}
		{\left\| M_0^\pm(\lambda)\right\|}_{HS}^2
		& = \int_{\R^6} |v(x) \mE_1^\pm(\lambda,x,y)v^*(y)|^2\, dx\, dy  \lesssim |\lambda|^2 \int_{\R^6}  \frac{\ang{x}^{-\delta} \ang{y}^{-\delta}}{|x-y|^2}   \, dx\, dy \les |\lambda|^2.
	\end{align*}
	Lemma~\ref{lem:spatial estimates} was used in the $x$ integral, we use \eqref{eqn:R3kell} with $p=2$. Another application in the $y$ integral shows the integral is bounded.
	Then, applying Lemmas~\ref{lem:R0 expansions} and \ref{lem:spatial estimates} show the bound. Similar computations show ${\left\| \p_\lambda M_0^\pm(\lambda)\right\|}_{HS} \les |\lambda|$ and ${\left\| \p_\lambda^2 M_0^\pm(\lambda)\right\|}_{HS} < \infty$ requiring more decay from the potential to ensure that
	\begin{align*}
		\int_{\R^6} \ang{x}^{-\delta} \ang{y}^{-\delta} dx\, dy \qquad \text{and} \qquad 	\int_{\R^6}  \ang{x}^{-\delta}|x-y|^2 \ang{y}^{-\delta} dx\, dy
	\end{align*}
	converge.  Applying Lemma~\ref{lem:spatial estimates} requires $\delta>3$ and $\delta>5$, respectively.
	Now, we consider the first Lipschitz bound.  For uniformity of presentation, we use that $|\lambda|\les 1$ to obtain the bounds in the statement. By Lemma~\ref{lem:R0 error Lip}, we have
	\begin{align*}
		|M_0^\pm(\lambda_2)-M_0^\pm(\lambda_1)|\les |\lambda_2-\lambda_1|^{\gamma} |v(x)|\bigg(\frac{|\lambda_2|}{|x-y|^{1-\gamma}}\bigg) |v^*(y)|.
	\end{align*}
	The Hilbert-Schmidt norm is bounded by Lemma~\ref{lem:spatial estimates}, provided $\delta > 1 + 2\gamma$. 
	
	Finally for the derivative, again by Lemma~\ref{lem:R0 error Lip}, we have
	\begin{align*}
		|\p_\lambda M_0^\pm(\lambda_2) - \p_\lambda M_0^\pm(\lambda_1)| \les  |v(x)| |\lambda_2-\lambda_1|^{\gamma}|\lambda_2|^{1-\gamma}(1+|x-y|^{\gamma})|v^*(y)|
	\end{align*}
	This is Hilbert-Schmidt provided that $\delta>3+2\gamma$.
	
	Finally, note that $M^{\pm}(\lambda_2) - M^{\pm}(\lambda_1) = (\lambda_2-\lambda_1)v\mG_1v^*+ M_0^{\pm}(\lambda_2) - M_0^{\pm}(\lambda_1)$ since $T_0$ has no $\lambda$ dependence,
	while  $\p_\lambda M^{\pm}(\lambda_2) -\p_\lambda M^{\pm}(\lambda_1)=\p_\lambda \mE_1^\pm(\lambda_2)-\p_\lambda \mE_1^\pm(\lambda_1)$ since $v\mG_1v^*$ is independent of $\lambda$.  Using that $|\lambda_2-\lambda_1|\les |\lambda_2|\les 1$, and the argument above for $M_0^\pm(\lambda)$ suffices to prove the claimed bounds for $M^\pm(\lambda)$.
\end{proof}

\begin{lemma}\label{lem:Minv zeroreg}
	If zero is a regular point of the spectrum and $|V(x)|\les \la x\ra^{-\delta}$ for some $\delta>1$.  Then $M^{\pm}(\lambda)$ is an invertible operator with uniformly bounded inverse on a sufficiently small interval $0<|\lambda| \ll 1$.    Furthermore,
	\begin{enumerate}[i)]
		\item If $\delta>1+2\gamma$ for some $0\leq \gamma \leq 1$, then for $0<|\lambda_1|\leq |\lambda_2|\ll1$, we have
		\begin{align*}
			{\| (M^\pm)^{-1}(\lambda_2)-(M^\pm)^{-1}(\lambda_1)\|}_{HS}\les |\lambda_2-\lambda_1|^\gamma.
		\end{align*}
		
		\item If $\delta > 3$, then 
		\begin{align}\label{eqn:minvderbound}
			{\| \partial_\lambda (M^\pm)^{-1}(\lambda) \|}_{HS}<\infty.
		\end{align}
		
		\item If $\delta>3+2\gamma$ for some $0\leq \gamma \leq 1$, then for $0<|\lambda_1|\leq |\lambda_2|\ll1$, we have
		\begin{align*}
			{\| \partial_\lambda (M^\pm)^{-1}(\lambda_2)-\partial_\lambda(M^\pm)^{-1}(\lambda_1)\|}_{HS}\les |\lambda_2-\lambda_1|^\gamma |\lambda_2|.
		\end{align*}
	\end{enumerate}
\end{lemma}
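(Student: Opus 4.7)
The plan is to invert $M^\pm(\lambda)$ by a Neumann series, then use the resolvent identity
\[
(M^\pm)^{-1}(\lambda_2) - (M^\pm)^{-1}(\lambda_1) = -(M^\pm)^{-1}(\lambda_2)\bigl[M^\pm(\lambda_2) - M^\pm(\lambda_1)\bigr](M^\pm)^{-1}(\lambda_1)
\]
and its differentiated form to propagate the regularity supplied by Lemma~\ref{lem:M exp} from $M^\pm$ to $(M^\pm)^{-1}$. For invertibility, Lemma~\ref{lem:M exp} applied at $\lambda_1 = 0$ gives $\|M^\pm(\lambda) - T_0\|_{op} \to 0$ as $\lambda \to 0$; since zero is regular, $T_0$ is invertible on $L^2(\R^3)$, so for $|\lambda|$ sufficiently small the Neumann series $(M^\pm)^{-1}(\lambda) = (I + T_0^{-1}(M^\pm(\lambda) - T_0))^{-1} T_0^{-1}$ converges and is uniformly bounded in operator norm.

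For part~(i), I would take the Hilbert--Schmidt norm of the resolvent identity above. Since $(M^\pm)^{-1}$ is uniformly bounded in operator norm and the Hilbert--Schmidt class is a two-sided ideal, the bound reduces to $\|M^\pm(\lambda_2) - M^\pm(\lambda_1)\|_{HS} \les |\lambda_2 - \lambda_1|^\gamma$, which is exactly the Lipschitz statement of Lemma~\ref{lem:M exp} under $\delta > 1 + 2\gamma$. For part~(ii), the Lipschitz bound (i) with $\gamma = 1$ justifies differentiating $M^\pm(\lambda)(M^\pm)^{-1}(\lambda) = I$ to obtain
\[
\partial_\lambda (M^\pm)^{-1}(\lambda) = -(M^\pm)^{-1}(\lambda)\,\partial_\lambda M^\pm(\lambda)\,(M^\pm)^{-1}(\lambda),
\]
and since $\partial_\lambda M^\pm(\lambda) = v\mG_1 v^* + \partial_\lambda M_0^\pm(\lambda)$ has uniformly bounded Hilbert--Schmidt norm under $\delta > 3$ (by Lemma~\ref{lem:M exp} together with the spatial bound on $v\mG_1 v^*$ via Lemma~\ref{lem:spatial estimates}), the product on the right is Hilbert--Schmidt.

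For part~(iii), I would apply the add--and--subtract device to the formula from (ii). Writing $F = (M^\pm)^{-1}$ and $M' = \partial_\lambda M^\pm$,
\begin{align*}
\partial_\lambda F(\lambda_2) - \partial_\lambda F(\lambda_1)
  & = -\bigl(F(\lambda_2) - F(\lambda_1)\bigr) M'(\lambda_2) F(\lambda_2) \\
  & \quad - F(\lambda_1)\,M'(\lambda_2)\bigl(F(\lambda_2) - F(\lambda_1)\bigr) - F(\lambda_1)\bigl(M'(\lambda_2) - M'(\lambda_1)\bigr) F(\lambda_1).
\end{align*}
The final term I would handle directly: because $v\mG_1 v^*$ is $\lambda$-independent, $M'(\lambda_2) - M'(\lambda_1) = \partial_\lambda M_0^\pm(\lambda_2) - \partial_\lambda M_0^\pm(\lambda_1)$, and the kernel-level estimate in Lemma~\ref{lem:R0 error Lip} furnishes the sharper Hilbert--Schmidt Lipschitz bound $\les |\lambda_2 - \lambda_1|^\gamma |\lambda_2|^{1-\gamma}$ under $\delta > 3 + 2\gamma$. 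For the first two terms I would split $M'(\lambda_j) = v\mG_1 v^* + \partial_\lambda M_0^\pm(\lambda_j)$: the $\partial_\lambda M_0^\pm$ piece inherits the smallness $\|\partial_\lambda M_0^\pm(\lambda_j)\|_{HS} \les |\lambda_j|$ directly from Lemma~\ref{lem:M exp}, while for the $v\mG_1 v^*$ piece I expand $F(\lambda_2) - F(\lambda_1) = -F(\lambda_2)\bigl[(\lambda_2 - \lambda_1) v\mG_1 v^* + M_0^\pm(\lambda_2) - M_0^\pm(\lambda_1)\bigr] F(\lambda_1)$ using the resolvent identity once more, splitting into an $O(|\lambda_2 - \lambda_1|)$ part and an $O(|\lambda_2 - \lambda_1|^\gamma)$ remainder. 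The main obstacle will be producing the advertised $|\lambda_2|$ factor in the $v\mG_1 v^*$ contributions: $v\mG_1 v^*$ is itself an $O(1)$ operator, so the $|\lambda_2|$ must be teased out by carefully interpolating between the Hölder Lipschitz bounds and the absolute-size bounds of Lemmas~\ref{lem:M exp} and~\ref{lem:R0 error Lip}.
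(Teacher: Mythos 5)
Your proposal follows the paper's proof essentially step for step: a Neumann series about $T_0^{-1}$ for invertibility, the resolvent identity combined with Lemma~\ref{lem:M exp} for part~(i), the formula $\partial_\lambda (M^\pm)^{-1}=-(M^\pm)^{-1}(\partial_\lambda M^\pm)(M^\pm)^{-1}$ for part~(ii), and the three-term add-and-subtract expansion for part~(iii). The obstacle you flag at the end is genuine, but it is also present in the paper: the proof there simply cites Lemma~\ref{lem:M exp} for the factor $|\lambda_2|$ in (iii), even though the first and third terms of the expansion involve the full $\partial_\lambda M^\pm(\lambda_j)=v\mG_1v^*+\partial_\lambda M_0^\pm(\lambda_j)$, whose $v\mG_1v^*$ piece is only $O(1)$ in Hilbert--Schmidt norm, so those terms deliver $|\lambda_2-\lambda_1|^\gamma$ on their face; since the only downstream use of this Lipschitz bound (the hypotheses on $\Gamma$ in Lemma~\ref{lem:low wtd tail}) requires merely $|\lambda_2-\lambda_1|^\gamma|\lambda_1|^{-1+}$, the extra factor is not needed at full strength and your version of the argument suffices for the paper's purposes.
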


\begin{proof}
	We consider the $+$ case only and drop the superscript.
	By a Neumann series expansion, if $T_0$ is invertible on $L^2$, denote the inverse by $D_1:=T_0^{-1}$ as an operator on $L^2$.  
	The fact that $D_1$ is an absolutely bounded operator is established in Lemma~2.10 of \cite{EGT}, which considered the massive operator.  That proof applies here with only minimal modifications.
	
	By Lemma~\ref{lem:M exp} and a Neumann series expansion,
	\begin{align*}
		M^{-1}(\lambda) = (T_0+\lambda v\mG_1v^*+ M_0(\lambda))^{-1}=D_1(\mathbbm 1+\lambda v\mG_1v^*D_1+M_0(\lambda)D_1)^{-1}=D_1 +O(|\lambda|),
	\end{align*}
	where the error term is understood as a Hilbert-Schmidt operator. In particular, $M^{-1}(\lambda)$ is an absolutely bounded operator on $L^2(\R^3)$.
	
	By the resolvent identity $M^{-1}(\lambda_1)-M^{-1}(\lambda_2)=M^{-1}(\lambda_1)(M(\lambda_1)-M(\lambda_2))M^{-1}(\lambda_2)$. Since $M$ was shown to be invertible, we may apply Lemma \ref{lem:M exp} to see
	\begin{align*}
		\left\|M^{-1}(\lambda_1)-M^{-1}(\lambda_2)\right\|_{HS} 
		& \lesssim \|M^{-1}(\lambda_1)\|_{L^2 \to L^2}\|(M(\lambda_1)-M(\lambda_2))\|_{HS}\|M^{-1}(\lambda_2)\|_{L^2 \to L^2}\\
		& \lesssim |\lambda_2-\lambda_1|^{\gamma},
	\end{align*}
	provided $1+2\gamma>\delta$. For the second claim, \eqref{eqn:minvderbound}, we use the  identity
	\begin{equation}\label{eqn:deriv diff}
		\p_\lambda M^{-1}(\lambda)=-M^{-1}(\lambda)(\p_\lambda M(\lambda))M^{-1}(\lambda)
	\end{equation}
	By Lemma \ref{lem:M exp} and the boundedness of $M^{-1}$ established above, we see
	\begin{equation*}
		\|\p_\lambda M^{-1}(\lambda)\|_{HS}\lesssim \|\p_\lambda M(\lambda)\|_{HS}\lesssim \lambda < \infty.
	\end{equation*}
	Finally, for the Lipschitz bound we recall the following useful algebraic identity 
	\begin{align}\label{eqn:alg identityl1l2}
		\prod_{k = 0}^M A_k(\lambda_2) - \prod_{k = 0}^M A_k (\lambda_1)
		= \sum_{\ell=0}^M \bigg(\prod_{k = 0}^{\ell - 1}A_k (\lambda_1)\!\bigg)
		\big(A_\ell (\lambda_2) - A_\ell (\lambda_1)\big)\bigg(
		\prod_{k = \ell + 1}^M A_k(\lambda_2)\!\bigg).
	\end{align}
	The algebraic identity ensures that there is a difference of the same operators evaluated at $\lambda_2$ and $\lambda_1$ on which we may apply the previously obtained Lipschitz bounds. By \eqref{eqn:alg identityl1l2} and \eqref{eqn:deriv diff}
	\begin{multline*}
		\p_\lambda M^{-1}(\lambda_2)-\p_\lambda M^{-1}(\lambda_1)
		=[M^{-1}(\lambda_2)-M^{-1}(\lambda_1)] \p_\lambda(M(\lambda_2))M^{-1}(\lambda_2)\\
		+M^{-1}(\lambda_1)[\p_\lambda M^{-1}(\lambda_2)-\p_\lambda M^{-1}(\lambda_1)]M^{-1}(\lambda_2)
		+M^{-1}(\lambda_1)\p_\lambda(M(\lambda_1))[M^{-1}(\lambda_2)-M^{-1}(\lambda_1)].
	\end{multline*}
	Since $\p_\lambda M^{-1}(\lambda)$ and $M^{-1}(\lambda)$ are absolutely bounded, by Lemma~\ref{lem:M exp}, we see 
	\begin{align*}
		\|\p_\lambda M^{-1}(\lambda_1)-\p_\lambda M^{-1}(\lambda_2)\|_{HS}
		& \lesssim |\lambda_1-\lambda_2|^\gamma |\lambda_2|.
	\end{align*}
	Here, we need $\delta>3 + 2\gamma$ to apply the Lipschitz bound of Lemma~\ref{lem:M exp}.
\end{proof}

\section{Dispersive bounds when zero is regular}\label{sec:zero reg}

Now that we have developed appropriate expansions for the free and perturbed resolvent operators, we are ready to prove the low energy claims in Theorem~\ref{thm:main} in the case when zero is regular.  Using the differentiability properties established in Section~\ref{sec:low expansions}, we develop expansions of the spectral measure in a neighborhood of zero with similar properties.  Using the Stone's formula, \eqref{eq:Stone}, we reduce the evolution bounds to oscillatory integrals that we control.  We first prove the uniform, $L^1\to L^\infty$, bounds.  Then, in Subsection~\ref{subsec:wtd bds} we utilize the more delicate Lipschitz continuity bounds to prove the large time-integrable bounds at the cost of mapping between weighted spaces.  Finally, we apply the Lipschitz bounds to prove the low energy version of Theorem~\ref{thm:weak decay}.

By iterating the symmetric resolvent identity, \eqref{eq:pert1}, we obtain the Born series expansion:
\begin{multline}\label{eqn:born series}
	\mR_V^\pm(\lambda)= \mR_0^\pm(\lambda)- \mR_0^\pm(\lambda) V \mR_0^\pm(\lambda)+ \mR_0^\pm (\lambda)V \mR_0^\pm(\lambda) V  \mR_0^\pm(\lambda)\\
	-\mR_0^\pm(\lambda) V \mR_0^\pm(\lambda) v^*(M^{\pm})^{-1}  v \mR^{\pm}_0 (\lambda) V  \mR_0^\pm(\lambda).
\end{multline}
We iterate so that we have two resolvents on either side of $(M^\pm)^{-1}$ since the kernel of $\mR_0^\pm(\lambda)$ has a leading term that is not locally $L^2(\R^3)$.  The main goal of this section is to use the Born series expansion to prove low energy dispersive bounds.

\begin{prop}\label{prop:low time bound}
	Assume that $|V(x)|\les \la x\ra^{-\delta}$ for some $\delta>3$. Then
	\begin{align*}
		\sup_{x,y\in \R^3} \left| \int_{\R} e^{-it\lambda} \chi(\lambda) (\mR_V^+-\mR_V^-)(\lambda)(x,y)\, d\lambda\right| \les \la t\ra^{-1}.
	\end{align*}
\end{prop}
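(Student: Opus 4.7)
The plan is to substitute the Born series \eqref{eqn:born series} into the oscillatory integral and control each term uniformly in $x,y$. The leading free term is handled by the low-energy portion of the proof of Theorem~\ref{thm:free}. For each of the three perturbed terms---all having the form of a product of $\mR_0^\pm$ factors interspersed with $V$ or with the sandwich $v^*(M^\pm)^{-1}v$---I would apply the algebraic identity \eqref{eqn:alg identityl1l2} to the $+/-$ difference. This yields a telescoping sum in which every summand contains exactly one difference factor, either $\mR_0^+ - \mR_0^- = \mE_1^+ - \mE_1^-$ or $(M^+)^{-1} - (M^-)^{-1}$, with the remaining factors carrying a fixed sign.

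After executing the intermediate spatial integrations (using $|V(x)| \les \la x\ra^{-\delta}$, or $|v(x)|, |v^*(x)| \les \la x\ra^{-\delta/2}$ in the tail term), the goal for each resulting scalar kernel $K(\lambda, x, y)$ is the pair of pointwise bounds
\begin{align*}
|K(\lambda, x, y)| \les |\lambda|, \qquad |\partial_\lambda K(\lambda, x, y)| \les 1,
\end{align*}
uniformly in $x, y$ on the support of $\chi$. The first yields $\left|\int \chi(\lambda) K\, d\lambda\right| = O(1)$, handling $|t|\les 1$; for $|t|>1$, a single integration by parts (no boundary terms, since $\chi$ is compactly supported) together with the derivative bound produces $|t|^{-1}\int_{-1}^{1}d\lambda = O(|t|^{-1})$. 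The key ingredients are Lemma~\ref{lem:R0 expansions}---where the cancellation of the $\lambda$-independent $\mG_0$ and of $\lambda\mG_1$ in each $+/-$ difference supplies the leading factor of $|\lambda|$---together with Lemma~\ref{lem:spatial estimates} and Lemma~\ref{lem:GolVis} to handle the convolutions against $V$ and, in the outermost $x,y$ integrations, kernels of the form $|x-z|^{-k}$ without a potential weight.

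The main obstacle is the tail term of \eqref{eqn:born series}, which contains $(M^\pm)^{-1}$. Here I invoke Lemma~\ref{lem:Minv zeroreg}: under $\delta>3$ the operator $(M^\pm)^{-1}$ is absolutely bounded on $L^2(\R^3)$ with bounded $\partial_\lambda (M^\pm)^{-1}$. For the differenced summand, the resolvent identity $(M^+)^{-1} - (M^-)^{-1} = (M^+)^{-1}(M^- - M^+)(M^-)^{-1}$ combined with $M^+ - M^- = v(\mE_1^+ - \mE_1^-)v^*$ supplies a factor of $|\lambda|$ in Hilbert--Schmidt norm. The two $v$ factors sandwiching $(M^\pm)^{-1}$ pair with the outer $\mR_0^\pm V \mR_0^\pm$ blocks via Lemma~\ref{lem:spatial estimates}, while the outermost integrations in $x$ and $y$, which lack a potential weight, close via Lemma~\ref{lem:GolVis}. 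After one integration by parts, \eqref{eqn:deriv diff} transfers $\partial_\lambda$ onto $(M^\pm)^{-1}$ at the cost of a factor controlled by Lemma~\ref{lem:Minv zeroreg}(ii), so the remaining $\lambda$-integral on $[-1,1]$ stays $O(1)$.
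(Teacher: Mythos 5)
Your proposal is correct and follows essentially the same route as the paper: Born series, free term via Theorem~\ref{thm:free}, the middle terms via the $+/-$ cancellation from \eqref{eqn:alg identity} and one integration by parts (Lemma~\ref{lem:born series}), and the tail via the uniform $L^2$ bounds on $v\mR_0^\pm V\mR_0^\pm$ paired with the absolute boundedness of $(M^\pm)^{-1}$ and $\partial_\lambda(M^\pm)^{-1}$ (Lemma~\ref{prop:perturbed bound}). The only, harmless, deviation is that you also exploit the $+/-$ difference in the tail term, whereas the paper explicitly bounds each sign separately there since the cancellation is not needed once the iterated resolvent blocks are known to be uniformly in $L^2$.
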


We prove this Proposition through a series of Lemmas.  First, we control the contribution of the Born series, the first three terms in \eqref{eqn:born series}, to the Stone's formula.  We note that the first term is the free evolution and is controlled in Theorem~\ref{thm:free}.

\begin{lemma}\label{lem:born series}
	Assume $|V(x)|\les \la x\ra^{-\delta}$ for $\delta>2$. Then for any fixed $k \in \mathbb N$, we have the bound
	\begin{align*}
		\sup_{x,y\in\R^3}\left|\int_\R e^{-it\lambda} \chi(\lambda)\!\left(\mc{R}_0^+(\lambda) (V \mc{R}_0^+(\lambda) )^k - \mc{R}_0^-(\lambda) (V\mc{R}_0^-(\lambda) )^k\right) \!d\lambda\right| 
		\lesssim \frac{1}{|t|}.    
	\end{align*}
\end{lemma}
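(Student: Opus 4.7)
My plan is to isolate the factor $\mR_0^+ - \mR_0^-$ by a telescoping expansion, exploit that this difference factor is uniformly bounded with no spatial singularity, and then conclude by a single integration by parts.

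Applying the standard telescoping identity (the same algebraic identity as \eqref{eqn:alg identityl1l2}, but with $+,-$ playing the role of $\lambda_2,\lambda_1$) gives
\begin{align*}
    \mR_0^+(V\mR_0^+)^k - \mR_0^-(V\mR_0^-)^k
    = \sum_{\ell=0}^{k} (\mR_0^+V)^\ell \bigl(\mR_0^+ - \mR_0^-\bigr)(V\mR_0^-)^{k-\ell},
\end{align*}
so that each summand $T_\ell(\lambda,x,y)$ contains exactly one difference factor. The crucial inputs are the uniform bounds
$$|(\mR_0^+-\mR_0^-)(\lambda,z,w)| \les |\lambda|^2, \qquad |\p_\lambda(\mR_0^+-\mR_0^-)(\lambda,z,w)| \les |\lambda|,$$
which were established in the proof of Theorem~\ref{thm:free} (since $(\mR_0^+-\mR_0^-)$ is, up to matrix factors, essentially $\sin(\lambda r)/r$-type and carries no $r^{-1}$ or $r^{-2}$ singularity).

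For the pointwise size bound, the fact that the difference factor is controlled uniformly in $z,w$ means it decouples the nested spatial integral at position $\ell$ into two independent chains involving only $\mR_0^\pm$'s and $V$'s. Combining this with the pointwise bounds $|\mR_0^\pm(\lambda,z,w)|\les |z-w|^{-2}+|\lambda|\,|z-w|^{-1}$ and $|\p_\lambda \mR_0^\pm(\lambda,z,w)|\les |z-w|^{-1}+|\lambda|$ from Lemma~\ref{lem:R0 expansions}, and iterating Lemma~\ref{lem:spatial estimates} (using the $\pm\eps$ refinement mentioned after that lemma to dodge the endpoint $k+\ell=3$) together with Lemma~\ref{lem:GolVis} for the outermost integrations, I would verify that the remaining spatial integral is bounded uniformly in $x,y$ whenever $\delta>2$. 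This yields $|T_\ell(\lambda,x,y)|\les |\lambda|^2$. Distributing $\p_\lambda$ across the $k+1$ resolvent-type factors and arguing analogously (each differentiated $\mR_0^\pm$ replaces a $|z-w|^{-2}$ with the milder $|z-w|^{-1}+|\lambda|$, while differentiating the isolated difference factor gains a power of $\lambda$ directly), one obtains $|\p_\lambda T_\ell(\lambda,x,y)|\les |\lambda|$ uniformly.

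A single integration by parts against $e^{-it\lambda}$ produces no boundary terms by the compact support of $\chi$, yielding
\begin{align*}
    \left|\int_\R e^{-it\lambda}\chi(\lambda) T_\ell(\lambda,x,y)\, d\lambda\right|
    \les \frac{1}{|t|}\int_{-1}^1 \bigl(|\chi(\lambda)||\p_\lambda T_\ell| + |\chi'(\lambda)||T_\ell|\bigr) d\lambda
    \les \frac{1}{|t|},
\end{align*}
and together with the trivial size bound $\int_{-1}^1 |\lambda|^2\,d\lambda \les 1$ this gives the $\la t\ra^{-1}$ conclusion. The main technical hurdle is controlling the nested singular spatial integrals uniformly in $x,y$ at the minimal decay $\delta>2$; the saving grace is that the difference factor $\mR_0^+-\mR_0^-$ carries no $|z-w|^{-2}$ singularity, so the worst chain of such singularities has length $k$ rather than $k+1$, making the iterated application of Lemma~\ref{lem:spatial estimates} (with arbitrarily small $\eps$ losses) just barely sufficient.
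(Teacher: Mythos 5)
Your proposal is correct and follows essentially the same route as the paper: the telescoping identity isolating the single difference factor $\mR_0^+-\mR_0^-$, the bounds \eqref{eqn:mu0 bounds} and \eqref{eqn:R0 bounds}, one integration by parts with no boundary terms, and iterated use of Lemma~\ref{lem:spatial estimates} (with the $\pm\eps$ trick) and Lemma~\ref{lem:GolVis} to collapse the nested spatial integrals uniformly in $x,y$. The only cosmetic difference is that the paper reduces $k>1$ to $k=1$ by first integrating out undifferentiated resolvent pairs, and in places uses the $|\lambda|/|x-y|$ form of the bound on the difference factor where you use the uniform $|\lambda|^2$ form; both work at $\delta>2$.
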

\begin{proof}
	To utilize the difference between the `$+$' and `$-$' resolvent operators, we use the following algebraic identity.
	\begin{align}\label{eqn:alg identity}
		\prod_{k = 0}^M A_k^+ - \prod_{k = 0}^M A_k^-
		= \sum_{\ell=0}^M \bigg(\prod_{k = 0}^{\ell - 1}A_k^-\bigg)
		\big(A_\ell^+ - A_\ell^-\big)\bigg(
		\prod_{k = \ell + 1}^M A_k^+\bigg).
	\end{align}
	We first prove the claim when $k=1$. Integrating by parts and expanding the derivative for the $k = 1$ case yields four terms, the first of which we will bound since the other three follow similarly. Recall that $\mu_0(\lambda, x,y) = \chi(\lambda)[\mR_0^+-\mR_0^-](\lambda, x, y)$. Using \eqref{eqn:R0 bounds} and \eqref{eqn:mu0 bounds} we integrate by parts once to see
	\begin{multline*}
		\bigg|\int_\R \frac{e^{-it\lambda}}{t} \int_{\R^3}\p_\lambda\big(\mu_0(\lambda)( x, z)V(z) \mc{R}_0^\pm(\lambda, z, y) \big) dzd\lambda\bigg|\\
		\qquad \qquad \lesssim \frac{1}{|t|}\int_{-1}^1 \int_{\R^3}   \la z\ra^{-\delta} \left(\frac{1}{|z - y|^2} + \frac{1}{|z - y|}+\frac{1}{|x - z||z-y|}+\frac{1}{|z - y|}\right)  \!dz\,d\lambda 
		\lesssim  \frac{1}{|t|},
	\end{multline*}
	here we  use Lemma~\ref{lem:spatial estimates}  since $\delta > 2$.  This bound is uniform in $x$ and $y$. The boundedness of the integral for small $t$ follows without integrating by parts but similarly using \eqref{eqn:mu0 bounds}, \eqref{eqn:R0 bounds} and Lemma~\ref{lem:spatial estimates}, to control the spatial integrals.
	
	When $k>1$ we note that applying Lemma~\ref{lem:spatial estimates} with $\delta >2$, and \eqref{eqn:R0 bounds},  on the support of $\chi(\lambda)$ we have
	\begin{align*}
		\bigg| \int_{\R^3}\mR_0^\pm(\lambda)(x,z_1) V(z_1) \mR_0^\pm(z_1,z_2)\, dz_1  \bigg| \les \frac{1}{|x-z_2|^2}+\frac{1}{|x-z_2|},
	\end{align*}
	which is the same upper bound we use in these arguments for $\mR_0^\pm (\lambda)(x,z_2)$.  Hence one may reduce to the argument when $k=1$ by first integrating in the spatial variables of resolvents that are not differentiated to reduce to bounding spatial integrals of the form considered above.
\end{proof}
\begin{lemma}\label{prop:perturbed bound}
	If $|V(x)| \les \la x\ra^{-\delta}$ for any $\delta > 3$ and $\Gamma(\lambda)$ is a absolutely bounded operator satisfying
	$$
	\big\| | \Gamma(\lambda) | \big\|_{L^2 \to L^2}+\big\| | \lambda \partial_\lambda \Gamma(\lambda) | \big\|_{L^2 \to L^2}\les |\lambda|^{0+},
	$$ 
	then we have the bound
	\begin{align*}
		\sup_{x,y\in \R^3} \bigg| \int_{\R} e^{-it\lambda} \chi(\lambda) \mR_0^\pm(\lambda) V \mR_0^\pm(\lambda) v^* \Gamma(\lambda)  v \mR^{\pm}_0 (\lambda) V  \mR_0^\pm(\lambda)\, d\lambda \bigg| \les \la t\ra^{-1}.
	\end{align*}
\end{lemma}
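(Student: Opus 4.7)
The plan is to split the operator about $\Gamma(\lambda)$, bound the two resulting pieces in $L^2$ via the spatial estimates of Lemmas~\ref{lem:spatial estimates}--\ref{lem:GolVis}, apply Cauchy--Schwarz to control the kernel uniformly, and then integrate by parts once to extract the $|t|^{-1}$ decay. Define
\[
A^\pm(\lambda,x,z) = \bigl[\mR_0^\pm(\lambda)\,V\,\mR_0^\pm(\lambda)\,v^*\bigr](x,z),
\qquad
B^\pm(\lambda,z,y) = \bigl[v\,\mR_0^\pm(\lambda)\,V\,\mR_0^\pm(\lambda)\bigr](z,y),
\]
so that the kernel inside the $\lambda$-integral is
\[
K^\pm(\lambda,x,y) = \int_{\R^6} A^\pm(\lambda,x,z_2)\,\Gamma(\lambda)(z_2,z_3)\,B^\pm(\lambda,z_3,y)\,dz_2\,dz_3,
\]
which by Cauchy--Schwarz in $(z_2,z_3)$ satisfies
\[
|K^\pm(\lambda,x,y)| \les \|A^\pm(\lambda,x,\cdot)\|_{L^2_z}\,\|\Gamma(\lambda)\|_{L^2\to L^2}\,\|B^\pm(\lambda,\cdot,y)\|_{L^2_z}.
\]

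First I would show $\sup_{x,\,|\lambda|\le 1}\|A^\pm(\lambda,x,\cdot)\|_{L^2_z}\les 1$ and the symmetric bound for $B^\pm$. Using $|\mR_0^\pm(\lambda)(x,y)|\les |x-y|^{-2}+|\lambda|/|x-y|$ from \eqref{eqn:R0 bounds}, the worst pointwise contribution to $A^\pm$ is $\la z\ra^{-\delta/2}\int\la w\ra^{-\delta}|x-w|^{-2}|w-z|^{-2}\,dw$. Lemma~\ref{lem:spatial estimates} with $k=\ell=2$ (choosing the admissible exponent $p$ slightly below $3/2$) gives $|x-z|^{-p}$, and Lemma~\ref{lem:GolVis} then returns an $L^2_z$ bound $\les\la x\ra^{3-2p-\delta}$, which is uniform in $x$ since $\delta>3$. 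Combined with $\|\Gamma(\lambda)\|_{L^2\to L^2}\les|\lambda|^{0+}$, this establishes $|K^\pm(\lambda,x,y)|\les|\lambda|^{0+}$, uniformly integrable on the support of $\chi$, which handles $|t|\les 1$ directly.

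For $|t|>1$ I would integrate by parts once; no boundary terms appear because $\chi$ is compactly supported. The derivative distributes onto $\chi$, $A^\pm$, $\Gamma$, or $B^\pm$. The critical term is the one where $\partial_\lambda$ hits $\Gamma$: writing $\partial_\lambda\Gamma(\lambda)=\lambda^{-1}\bigl(\lambda\partial_\lambda\Gamma(\lambda)\bigr)$ and using the hypothesis yields $\|\partial_\lambda\Gamma(\lambda)\|_{L^2\to L^2}\les|\lambda|^{-1+0+}$, which is $L^1_\lambda$ on $[-1,1]$. When $\partial_\lambda$ falls on $A^\pm$ or $B^\pm$, differentiation improves the singularity since $|\partial_\lambda\mR_0^\pm(\lambda)(x,y)|\les 1/|x-y|+|\lambda|$, so the worst spatial integrand becomes $\la w\ra^{-\delta}/(|x-w|\,|w-z|^2)$. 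This sits at the borderline $k+\ell=3$ of Lemma~\ref{lem:spatial estimates}, where the $\eps$-perturbation trick $|w-z|^{-2}\les|w-z|^{-2+\eps}+|w-z|^{-2-\eps}$ produces a kernel bounded by $|x-z|^{-1}$ up to $\eps$-losses; Lemma~\ref{lem:GolVis} with $k=2\mp 2\eps$ then gives $\|\partial_\lambda A^\pm(\lambda,x,\cdot)\|_{L^2_z}\les\la x\ra^{1\pm 2\eps-\delta}\les 1$. The $\partial_\lambda\chi$ contribution is harmless. Summing, $\partial_\lambda[\chi K^\pm]$ is bounded in $L^1_\lambda$ uniformly in $x,y$, which together with the integration by parts yields the $|t|^{-1}$ bound.

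The main obstacle is the bookkeeping at the borderline exponent sum $k+\ell=3$ in Lemma~\ref{lem:spatial estimates}: the $\eps$-perturbation must be carefully balanced against the requirement $k+\delta>3$ in Lemma~\ref{lem:GolVis} so that every spatial integral converges. This balance is precisely what forces the decay hypothesis $\delta>3$ on $V$, and it is also the reason the symmetric resolvent identity must be iterated so that two free resolvents sit on each side of $\Gamma(\lambda)$: a single $\mR_0^\pm v^*$ fails to be locally $L^2$ because of the leading contribution $\mG_0(x,y)=i\alpha\cdot\hat e/(4\pi|x-y|^2)$, which is not tamed by the $\la\cdot\ra^{-\delta/2}$ decay of $v^*$ alone.
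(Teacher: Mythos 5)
Your proposal is correct and follows essentially the same route as the paper: integrate by parts once, exploit the fact that two free resolvents flank $\Gamma(\lambda)$ so that the flanking kernels $v\mR_0^\pm V\mR_0^\pm$ and their $\lambda$-derivatives lie in $L^2$ uniformly in $x,y$ (via Lemmas~\ref{lem:spatial estimates} and \ref{lem:GolVis}, with the $\eps$-perturbation at the borderline $k+\ell=3$), and use Cauchy--Schwarz together with the hypothesis $\|\,|\lambda\partial_\lambda\Gamma(\lambda)|\,\|_{L^2\to L^2}\les|\lambda|^{0+}$ to reduce to the integrable singularity $|\lambda|^{-1+}$. This matches the paper's argument in both structure and in where the decay hypothesis $\delta>3$ is consumed.
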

The assumptions on the operator $\Gamma(\lambda)$ are far less stringent than needed here since $\partial_\lambda (M^{\pm})^{-1}(\lambda)$ is bounded in a neighborhood of zero when zero is regular.  We choose to prove a more general result to reuse in the analysis when zero is not regular.
\begin{proof}
	As before we integrate by parts once to bound
	\begin{align}\label{eqn:BS series tail}
		\frac{1}{|t|}\sup_{x,y\in \R^3} \left|\int_{\R} e^{-it\lambda} \partial_\lambda \left(\chi(\lambda) \mR_0^\pm(\lambda) V \mR_0^\pm(\lambda) v^* \Gamma(\lambda)  v \mR^{\pm}_0 (\lambda) V  \mR_0^\pm(\lambda)\right) d\lambda \right|.
	\end{align}
	The assumptions on $\Gamma(\lambda)$ and \eqref{eqn:R0 bounds}  ensure there are no boundary terms at zero.  Furthermore, when $|\lambda|\ll 1$, by \eqref{eqn:R0 bounds} we have
	\begin{multline*}
		\left| (\mR_0^\pm V\mR_0^\pm)(\lambda)(z_2,x) \right|
		\les \int_{\R^3}\ang{z_1}^{-\delta}\left(\frac{1}{\abs{z_2-z_1}^2\abs{z_1-x}^2}+\frac{1}{\abs{z_2-z_1}\abs{z_1-x}}\right)\, dz_1 \les 
		\frac{1}{|z_2-x|},
	\end{multline*}
	by Lemma~\ref{lem:spatial estimates} provided $\delta>2$. Using Lemma~\ref{lem:spatial estimates} again in the $z_2$ integral we see that
	\begin{align}\label{eqn:4.3a}
		\sup_{x \in \R^3}\left\| (v\mR_0^\pm V\mR_0^\pm)(\lambda)(\, \cdot \,, x) \right\|^2_{L^2} 
		&
		\les 1.  
	\end{align}
	By duality, $\left\|\big(\mR_0^\pm V\mR_0^\pm v^*\big)(\lambda)(x, \,\cdot\,)\right\|_{L^2} \les 1$ holds uniformly in $x$ as well.
	Using \eqref{eqn:R0 bounds} and Lemma~\ref{lem:spatial estimates}, we see 
	\begin{align}\label{eqn:4.3b}
		\sup_{x \in \R^3}\left\|\partial_\lambda\big(v\mR_0^\pm V\mR_0^\pm\big)(\lambda)(x,\,\cdot\,)\right\|^2_{L^2}  
		\les 1 .  
	\end{align}
	One uses that $\delta>3$ here to ensure that the contribution the slowest decaying terms after the applying Lemma~\ref{lem:spatial estimates} are in $L^2$.
	Bringing everything together, we may express the contribution of \eqref{eqn:BS series tail} by rewriting the integral as
	$$
	\eqref{eqn:BS series tail}=\frac{1}{t} \int_{\R} e^{-it\lambda}\mE(\lambda)\,d\lambda,
	$$
	where 
	\begin{align*}
		\mE(\lambda) = \sum \big(\p_\lambda^{k_1}\chi(\lambda)\big) \p_\lambda^{k_2}\big(\mR_0^\pm(\lambda) V \mR_0^\pm(\lambda) v^* \big)\big(\p_\lambda^{k_3}\Gamma(\lambda)\big) \p_\lambda^{k_4}\big(v \mR^{\pm}_0 (\lambda) V \mR_0^\pm(\lambda)\big),
	\end{align*}
	and the sum is taken over the indices $k_j \in \set{0,1}$ subject to $k_1 + k_2 + k_3 + k_4 = 1$. By the absolute boundedness of $\Gamma(\lambda)$, we have
	\begin{align*}
		|\mE(\lambda)| 
		& = \big|\big(\p_\lambda^{k_1}\chi(\lambda)\big) \p_\lambda^{k_2}\big(\mR_0^\pm(\lambda) V \mR_0^\pm(\lambda) v^* \big)\big(\p_\lambda^{k_3}\Gamma(\lambda)\big) \p_\lambda^{k_4}\big(v \mR^{\pm}_0 (\lambda) V \mR_0^\pm(\lambda)\big)\big| \\
		& = \ang{\p_\lambda^{k_2}\big(v\mR_0^\mp(\lambda) V \mR_0^\mp(\lambda)\big), \big(\p_\lambda^{k_3}\Gamma(\lambda)\big) \p_\lambda^{k_4}\big(v \mR^{\pm}_0 (\lambda) V   \mR_0^\pm(\lambda)\big)}_{L^2} \\
		& \lesssim \big\|\p_\lambda^{k_2}\big((v\mR_0^\pm V\mR_0^\pm)(\, \cdot \,, x)\big)\big\|_{L^2}\big\||\p_\lambda^{k_3}\Gamma(\lambda, z_2, \,\cdot\,)| \p_\lambda^{k_4} \big((\mR_0^\pm V\mR_0^\pm v^*)(\,\cdot\,, y)\big)\big\|_{L^2} \\
		& \lesssim \big\|\p_\lambda^{k_2}\big((v\mR_0^\pm V\mR_0^\pm)(\, \cdot \,, x)\big)\big\|_{L^2}\big\||\p_\lambda^{k_3}\Gamma(\lambda)|\big\|_{L^2 \to L^2}\big\|\p_\lambda^{k_4} \big((\mR_0^\pm V\mR_0^\pm v^*)(\,\cdot\,, y)\big)\big\|_{L^2}
		\lesssim |\lambda|^{-1+},
	\end{align*}
	which holds over the support of $\chi$, which is contained in the interval $[-1,1]$. Applying this bound to \eqref{eqn:BS series tail}, we have
	\begin{align*}
		\eqref{eqn:BS series tail}
		\lesssim \frac{1}{|t|}\sup_{x,y \in \R^3}\left|\int_{\R} e^{-it\lambda}\mE(\lambda)\,d\lambda\right|
		\lesssim \frac{1}{|t|}\sup_{x,y \in \R^3}\int_{-1}^1  |\lambda|^{-1+} \,d\lambda
		\lesssim \frac{1}{|t|}
	\end{align*}
	as desired.  The claim for boundedness for small $|t|$ follows the argument for when $k_1=1$ without integrating by parts.
\end{proof}
Now, we prove Proposition~\ref{prop:low time bound}.
\begin{proof}[Proof of Proposition \ref{prop:low time bound}]
	By expanding $\mR_V^\pm$ into a Born series expansion as in \eqref{eqn:born series}, we can control the contribution of each term.  The contribution of first term in \eqref{eqn:born series} to \eqref{eqn:stone} is controlled by Theorem~\ref{thm:free}, the contribution of the second and third are controlled by Lemma~\ref{lem:born series}.  For the final term, we do not utilize the difference between the `+' and `$-$' resolvent, but control each by applying Lemma~\ref{prop:perturbed bound}.
\end{proof}

\subsection{Weighted dispersive bounds when zero is regular}\label{subsec:wtd bds}

We now turn to showing that the large time integrable bounds hold when zero is regular.  Here we utilize the Lipschitz continuity of the perturbed resolvent and its first derivative in a neighborhood of the threshold at $\lambda=0$.  Our main goal is to show the bound
\begin{prop}\label{prop:low wtd}
	Fix $0<\gamma\leq 1$.  If $|V(x)|\les \la x\ra^{-\delta}$ for some $\delta>3+2\gamma$, then for $|t|>1$ we have the weighted bound
	\begin{align*}
		\bigg| \int_{\R} e^{-it\lambda} \chi(\lambda)  [\mR_V^+-\mR_V^-](\lambda)(x,y) \, d\lambda \bigg| \les  \frac{\la x\ra^\gamma \la y \ra^\gamma}{|t|^{1+\gamma}}.
	\end{align*}
\end{prop}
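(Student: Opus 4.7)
The plan is to iterate the symmetric resolvent identity \eqref{eqn:born series} and treat each resulting term along the pattern already used in Proposition~\ref{prop:low time bound}, but instead of stopping after one integration by parts and only obtaining $|t|^{-1}$, I apply the Lipschitz-continuity oscillatory estimate of Lemma~\ref{lem:osc int} (that is, the shift $\lambda\mapsto \lambda-\pi/t$) once more to pick up an extra $|t|^{-\gamma}$ at the cost of a $|x-y|^\gamma$-type spatial weight. The free term is already covered by the weighted part of Theorem~\ref{thm:free}, and $|x-y|^\gamma \les \la x\ra^\gamma \la y\ra^\gamma$ converts the spatial weight into the form claimed.

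For the finite Born series terms $\mR_0^\pm (V\mR_0^\pm)^k$ with $k=1,2$, I would mimic Lemma~\ref{lem:born series}: use the algebraic identity \eqref{eqn:alg identity} to isolate a single $\mR_0^+-\mR_0^-$ factor, integrate by parts once in $\lambda$, and then apply Lemma~\ref{lem:osc int} to the remaining factor. The key input is Corollary~\ref{cor:Lips R0}, which provides the Hölder difference
\[ |\p_\lambda \mR_0^\pm(\lambda)-\p_\lambda \mR_0^\pm(\lambda-\pi/t)| \les |t|^{-\gamma}|\lambda|^{1-\gamma}(1+|x-y|^{\gamma}) \]
on the one differentiated free resolvent. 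The intermediate spatial integrals involving $V$ and the undifferentiated resolvents are handled by Lemma~\ref{lem:spatial estimates}, and the surviving $(1+|x-y|^\gamma)$ is bounded by $\la x\ra^\gamma \la y\ra^\gamma$.

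The main obstacle will be the tail term
\[ \int_{\R} e^{-it\lambda}\chi(\lambda)\,\mR_0^\pm V \mR_0^\pm v^* (M^\pm)^{-1} v\mR_0^\pm V \mR_0^\pm(\lambda)(x,y)\, d\lambda, \]
for which no $\pm$ cancellation is available in the kernel. Here I would run the inner-product argument of Lemma~\ref{prop:perturbed bound}: integrate by parts once to extract $|t|^{-1}$, distribute the derivative via the product rule, and write the integrand as an $L^2$ pairing using the uniform bounds \eqref{eqn:4.3a}--\eqref{eqn:4.3b}. Then I apply Lemma~\ref{lem:osc int} a second time; the algebraic identity \eqref{eqn:alg identityl1l2} places a Hölder difference at $\lambda$ and $\lambda - \pi/t$ on exactly one of the four factors. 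The required Lipschitz bounds are supplied by Corollary~\ref{cor:Lips R0} for the free resolvents and by parts (i) and (iii) of Lemma~\ref{lem:Minv zeroreg} for $(M^\pm)^{-1}$ and $\p_\lambda (M^\pm)^{-1}$, each contributing the needed $|t|^{-\gamma}$; the absolutely bounded Hölder-difference operator then acts in the inner spatial pairing.

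The weight $\la x\ra^\gamma \la y\ra^\gamma$ enters only through the $(1+|z-x|^\gamma)$, respectively $(1+|z-y|^\gamma)$, factor appearing in Corollary~\ref{cor:Lips R0} on the outermost differentiated resolvent; using $|z-x|^\gamma \les \la z\ra^\gamma \la x\ra^\gamma$, the extra $\la z\ra^\gamma$ created inside the spatial integrals is absorbed precisely by strengthening the decay hypothesis from $\delta>3$ (as in Proposition~\ref{prop:low time bound}) to $\delta>3+2\gamma$, so that the weighted analogues of \eqref{eqn:4.3a}--\eqref{eqn:4.3b} remain finite via Lemma~\ref{lem:spatial estimates}. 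The bookkeeping of these weighted $L^2$ bounds, together with verifying that each Lipschitz-difference appearance contributes at most one factor of $\la z\ra^\gamma$ (so that the decay budget in $V$ is not exhausted), is the step I expect to be the most delicate.
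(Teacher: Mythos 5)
Your proposal follows essentially the same route as the paper: the Born series \eqref{eqn:born series}, with the free term handled by Theorem~\ref{thm:free}, the finite Born terms by the algebraic identity \eqref{eqn:alg identity} plus one integration by parts and the $\lambda\mapsto\lambda-\pi/t$ shift of Lemma~\ref{lem:osc int} with Corollary~\ref{cor:Lips R0}, and the tail by the $L^2$-pairing argument with the Lipschitz bounds on $(M^\pm)^{-1}$ from Lemma~\ref{lem:Minv zeroreg} --- exactly the content of Lemmas~\ref{lem:low wtd Born} and \ref{lem:low wtd tail}. The weight bookkeeping you flag as delicate ($|x-z|^\gamma\les\la x\ra^\gamma\la z\ra^\gamma$ absorbed by $\delta>3+2\gamma$) is also precisely how the paper closes the argument.
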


This tells us that the low energy portion of the evolution satisfies a large time-integrable bound as an operator from $L^{1,\gamma}\to L^{\infty,-\gamma}$.  As in the proof of the uniform bound in the previous subsection, we consider the Born series expansion \eqref{eqn:born series} and bound each term individually.  
\begin{lemma}\label{lem:low wtd Born}
	Under the assumptions of Proposition~\ref{prop:low wtd}, for any fixed $k \in \mathbb{N}$ we have
	\begin{align*}
		\left|\int_\R e^{-it\lambda} \chi(\lambda)\!\left(\mc{R}_0^+(\lambda) (V \mc{R}_0^+(\lambda) )^k - \mc{R}_0^-(\lambda) (V\mc{R}_0^-(\lambda) )^k\right)d\lambda\right| 
		\lesssim \frac{\la x\ra^\gamma \la y\ra^\gamma}{|t|^{1+\gamma}}.    
	\end{align*}
\end{lemma}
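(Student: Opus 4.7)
The plan is to combine the algebraic identity \eqref{eqn:alg identity} with Lemma~\ref{lem:osc int}, following the template of the weighted free bound in Theorem~\ref{thm:free}. First I would apply \eqref{eqn:alg identity} to write $\mR_0^+(V\mR_0^+)^k - \mR_0^-(V\mR_0^-)^k$ as a finite sum of products in which exactly one factor is the difference $[\mR_0^+-\mR_0^-](\lambda)$. Multiplying by $\chi(\lambda)$ turns this distinguished factor into $\mu_0(\lambda)$, for which we have the pointwise bounds \eqref{eqn:mu0 bounds} and the Lipschitz bound \eqref{eqf:mu Lipschitz2}; the remaining factors are single Dirac resolvents $\mR_0^\pm(\lambda)$ governed by \eqref{eqn:R0 bounds} and Corollary~\ref{cor:Lips R0}.

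Next, for each summand I would apply Lemma~\ref{lem:osc int} with $\mE$ equal to the kernel at $(x,y)$, reducing matters to estimating $|t|^{-1}\int_\R |\mE'(\lambda) - \mE'(\lambda - \pi/t)| \, d\lambda$. Distributing $\partial_\lambda$ via the product rule and then applying the algebraic identity \eqref{eqn:alg identityl1l2} to telescope the shift by $\pi/t$ through the product expresses the integrand as a finite sum of terms, each containing exactly one factor replaced by its difference at $\lambda$ and at $\lambda - \pi/t$. Since $|\lambda_2 - \lambda_1| = \pi/|t|$ and $|\lambda_2| \les 1$ on the support of $\chi$, Corollary~\ref{cor:Lips R0} and \eqref{eqf:mu Lipschitz2} each yield a gain of $|t|^{-\gamma}$ at the cost of a spatial factor bounded by $(1 + |z_i-z_j|^\gamma)$ between the two spatial variables of the differentiated factor, while the undifferentiated factors are controlled by their pointwise bounds.

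The weights $\la x\ra^\gamma \la y\ra^\gamma$ emerge when the Lipschitz spatial factor involves an outer variable: using $|x-z|^\gamma \les \la x\ra^\gamma \la z\ra^\gamma$ and symmetrically for $y$, the extra $\la z\ra^\gamma$ is absorbed into the decay of a neighboring $V$ factor. The resulting spatial integrals converge via iterated application of Lemma~\ref{lem:spatial estimates}, and the hypothesis $\delta > 3 + 2\gamma$ is exactly what allows each $V$ factor to shed up to $\la z\ra^{2\gamma}$ to the Lipschitz spatial weights while retaining enough decay. For $k>1$, I would reduce to the $k=1$ case as in Lemma~\ref{lem:born series} by pre-integrating those resolvent factors that carry neither a derivative nor a Lipschitz difference, which produce the same type of bounds as a single resolvent. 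The main obstacle will be combinatorial bookkeeping: one must verify that in every term generated by the product rule and the telescoping identity \eqref{eqn:alg identityl1l2}, the Lipschitz spatial factor can be split so that only the outermost positions receive the $\la x\ra^\gamma$ or $\la y\ra^\gamma$ weights, and that the remaining iterated spatial integral still converges under Lemma~\ref{lem:spatial estimates} with the margin provided by $\delta > 3 + 2\gamma$.
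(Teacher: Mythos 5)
Your proposal is correct and follows essentially the same route as the paper's proof: apply the algebraic identity \eqref{eqn:alg identity} to isolate the difference $\mu_0$, invoke Lemma~\ref{lem:osc int} with $\lambda_1=\lambda-\pi/t$, telescope the shifted product via \eqref{eqn:alg identityl1l2}, gain $|t|^{-\gamma}$ from \eqref{eqf:mu Lipschitz2} and Corollary~\ref{cor:Lips R0}, absorb the spatial Lipschitz factors into the weights $\la x\ra^\gamma\la y\ra^\gamma$ and the decay of $V$, and reduce $k>1$ to $k=1$ as in Lemma~\ref{lem:born series}. No gaps beyond the bookkeeping you already flag, which the paper handles exactly as you describe.
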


\begin{proof}
	
	We apply Lemma~\ref{lem:osc int} and  the Lipschitz bounds on the resolvents using $\lambda_2 = \lambda$ and $\lambda_1 = \lambda-\pi/t$ for large $|t|$, so that $|\lambda_2-\lambda_1| = \pi/|t|$ is small.  We first consider the case when $k=1$.  By \eqref{eqn:alg identity}, as in the proof of Lemma~\ref{lem:born series}, it suffices to control
	\begin{align}\label{eqn:born k1 lips}
		\int_{\R}  e^{-it\lambda} \mu_0 (\lambda) V \mc{R}_0^+(\lambda) \, d\lambda=\frac{1}{it} \int_{\R} e^{-it\lambda} \big(\partial_\lambda [\mu_0  V \mc{R}_0^+](\lambda)-\partial_\lambda [\mu_0  V \mc{R}_0^+](\lambda-\pi/t)\big) \, d\lambda.
	\end{align}
	We first consider the contribution of $\partial_\lambda \mu_0V\mR_0^+$.  To utilize the Lipschitz bounds we apply \eqref{eqn:alg identityl1l2} to see
	\begin{align}\label{eqn:born Lips example}
		[\partial_\lambda \mu_0(\lambda)-\partial_\lambda \mu_0(\lambda-\pi/t)]V\mR_0^+(\lambda)+\partial_\lambda \mu_0(\lambda-\pi/t)V[\mR_0^+(\lambda)-\mR_0^+(\lambda-\pi/t)].
	\end{align}
	Applying \eqref{eqn:mu0 bounds}, \eqref{eqf:mu Lipschitz2}, \eqref{eqn:R0 bounds}, Corollary~\ref{cor:Lips R0}, and including the spatial variable dependence, we see that
	\begin{align*}
		|\eqref{eqn:born Lips example}| \les |t|^{-\gamma}|x-z|^\gamma |\lambda| |V(z)|\bigg(\frac{1}{|z-y|^2}+\frac{|\lambda|}{|z-y|} + \frac{|\lambda|}{|z-y|^{1-\gamma}} \bigg)+|t|^{-1}|V(z)| \frac{|\lambda|^2}{|z-y| } .
	\end{align*}
	Since $|t|>1$, $|\lambda|\les 1$, and $|x-z|^\gamma\les \la x\ra^\gamma \la z\ra^\gamma$, 
	$$
	|\eqref{eqn:born Lips example}| \les |t|^{-\gamma} \la x\ra^\gamma \la z\ra^{\gamma-\delta}\bigg(\frac{1}{|z-y|^2}+\frac{1}{|z-y|^{1-\gamma}}\bigg).
	$$
	Applying Lemma~\ref{lem:spatial estimates} to control the spatial integrals, along with the support of $\chi$ shows that the contribution of \eqref{eqn:born Lips example} to \eqref{eqn:born k1 lips} is bounded by $|t|^{-1-\gamma} \la x\ra^\gamma$.  In the case that the $\lambda$ derivative acts on the resolvent on the right, applying \eqref{eqn:mu0 bounds} and Corollary~\ref{cor:Lips R0} shows that its contribution to \eqref{eqn:born k1 lips} may be bounded by
	$$
		\frac{1}{|t|^{1+\gamma}}\int_{\R^3} \la 	z\ra^{\gamma-\delta} \bigg(\frac{1}{|z-y|}+\la y\ra^\gamma \bigg)\, dz \les \frac{\la y\ra^\gamma}{|t|^{1+\gamma}}.
	$$
	A similar analysis shows that 
	$$
		\big|\mu_0(\lambda_1)V[\partial_\lambda \mR_0(\lambda_2)-\partial_\lambda\mR_0(\lambda_1)]\big| \les |t|^{-\gamma} \la y\ra^\gamma \la z\ra^{\gamma-\delta}\bigg(\frac{1}{|z-x|^2}+\frac{1}{|z-x|^{1-\gamma}}\bigg).
	$$
	Applying Lemma~\ref{lem:spatial estimates} completes the proof provided $\delta>3+2\gamma$.
	
	For $k>1$, similar to the proof of Lemma~\ref{lem:born series}, we may iterate this argument and apply Lemma~\ref{lem:spatial estimates} to see that the iterated spatial integrals are effectively harmless. Using \eqref{eqn:alg identity}, we see that we may use the Lipschitz bounds on one resolvent in the product, while the remaining resolvents are all bounded as before.  Applying the bounds \eqref{eqn:mu0 bounds}, \eqref{eqf:mu Lipschitz}, \eqref{eqn:R0 bounds} and Corollary~\ref{cor:Lips R0} repeatedly suffices to prove the claim. 
\end{proof}
We now turn to the tail of the Born series.  We do not rely on the difference between the `$+$' and `$-$' resolvents here, since we showed the iterated resolvents are locally $L^2$ in the proof of Lemma~\ref{prop:perturbed bound}, we bound both the `$+$' and `$-$' resolvent contributions in one step.  As before, the assumptions on $\Gamma(\lambda)$ are less stringent than needed in the case when zero is regular.
\begin{lemma}\label{lem:low wtd tail}
	
	Fix $0<\gamma\leq 1$, and suppose that $\Gamma(\lambda)$ and $\partial_\lambda \Gamma(\lambda)$ are absolutely bounded operators satisfying
    \begin{align*}
        \big\| | \Gamma(\lambda)| \big\|_{L^2 \to L^2} + \big\|  |\lambda \partial_\lambda \Gamma(\lambda)| \big\|_{L^2 \to L^2}\les |\lambda|^{0+},
    \end{align*}
	and the Lipschitz bounds
    \begin{align*}
        \big\||\Gamma(\lambda_2)-\Gamma(\lambda_1)|\big\|_{L^2\to L^2}+\big\| |\partial_\lambda\Gamma(\lambda_2)-\partial_\lambda\Gamma(\lambda_1)|\big\|_{L^2\to L^2} \les |\lambda_2-\lambda_1|^\gamma |\lambda_1|^{-1+},
    \end{align*}
	when $0<|\lambda_1|\leq |\lambda_2|\leq 1$.  If $|V(x)| \les \la x\ra^{-\delta}$ for some $\delta > 3+2\gamma$, then for $|t|>1$, we have the bound
	\begin{align*}
		\bigg| \int_{\R} e^{-it\lambda} \chi(\lambda) \mR_0^\pm(\lambda) V \mR_0^\pm(\lambda) v^* \Gamma(\lambda)  v \mR^{\pm}_0 (\lambda) V  \mR_0^\pm(\lambda)\, d\lambda \bigg| \les \frac{\la x\ra^\gamma \la y\ra^\gamma}{|t|^{1+\gamma}}.
	\end{align*}
	
\end{lemma}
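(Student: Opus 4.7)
\smallskip

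The plan is to follow the template of Lemma~\ref{prop:perturbed bound}, but extract the extra $|t|^{-\gamma}$ decay from Lemma~\ref{lem:osc int} instead of a straight integration by parts. Let $\mE(\lambda)$ denote the full integrand $\chi(\lambda)\mR_0^\pm(\lambda) V \mR_0^\pm(\lambda) v^* \Gamma(\lambda)  v \mR^{\pm}_0 (\lambda) V  \mR_0^\pm(\lambda)$. The hypothesis on $\|\lambda\p_\lambda\Gamma\|_{L^2\to L^2}$ together with the pointwise bounds \eqref{eqn:R0 bounds} and the duality argument in Lemma~\ref{prop:perturbed bound} (particularly \eqref{eqn:4.3a} and \eqref{eqn:4.3b}) guarantee that $\mE'\in L^1$, so Lemma~\ref{lem:osc int} reduces the claim to controlling
\begin{align*}
	\frac{1}{|t|}\int_{\R} \bigl|\mE'(\lambda)-\mE'(\lambda-\pi/t)\bigr|\,d\lambda.
\end{align*}

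Next, I would expand $\mE'$ by the product rule. Each term has $\p_\lambda$ falling on either $\chi$, one of the five resolvents $\mR_0^\pm$, or $\Gamma$. For each such term, apply the telescoping identity \eqref{eqn:alg identityl1l2} to its difference at $\lambda_2=\lambda$ and $\lambda_1=\lambda-\pi/t$, producing a sum in which exactly one factor appears as a Lipschitz difference while all remaining factors are evaluated at $\lambda_1$ or $\lambda_2$. The Lipschitz differences are then controlled by Corollary~\ref{cor:Lips R0} when a resolvent is differenced (in either the undifferentiated or once-differentiated form) and by the hypotheses on $\Gamma$ and $\p_\lambda\Gamma$ when $\Gamma$ is differenced. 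In every case the Lipschitz bound yields a factor $|\lambda_2-\lambda_1|^\gamma=(\pi/|t|)^\gamma$, contributing the desired $|t|^{-\gamma}$.

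Once one factor is isolated as a Lipschitz difference, the remaining factors are handled exactly as in Lemma~\ref{prop:perturbed bound}: pair the iterated resolvent/potential products on either side of $\Gamma(\lambda)$ (or $\p_\lambda\Gamma$) via Cauchy--Schwarz through the middle $L^2$ factor, so that the absolute boundedness of $\Gamma$ and $\lambda\p_\lambda\Gamma$ can be used. The weight $\la x\ra^\gamma\la y\ra^\gamma$ is produced from the spatial factor $(1+|z-z'|^\gamma)$ coming from the second bound of Corollary~\ref{cor:Lips R0} (or the factor $|z-z'|^{\gamma-1}|z-z'|$ hidden in the first). When $z'$ is an external variable ($x$ or $y$) the splitting $|z-z'|^\gamma\leq \la z\ra^\gamma\la z'\ra^\gamma$ pulls out $\la x\ra^\gamma$ or $\la y\ra^\gamma$, and the residual $\la z\ra^\gamma$ is absorbed into the potential decay $\la z\ra^{-\delta+\gamma}$. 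The remaining spatial integrals (both those inside the $L^2$ norms and the one through the pairing) are estimated by iterated applications of Lemma~\ref{lem:spatial estimates}, which converge precisely under the decay hypothesis $\delta>3+2\gamma$. Finally the $\lambda$-integral over $[-1,1]$ converges since the worst integrand behaves like $|\lambda|^{-1+}$ from the $\Gamma$ Lipschitz hypothesis and like $|\lambda|^{0+}$ elsewhere.

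The main obstacle is accounting: there are many terms (one for each position the $\p_\lambda$ can land, and for each position a further decomposition from \eqref{eqn:alg identityl1l2}), and for each the spatial weight must be allocated correctly. The trickiest subterm is when $\p_\lambda$ hits an outermost resolvent $\mR_0^\pm(\lambda)(x,z)$ or $\mR_0^\pm(\lambda)(z,y)$ and the Lipschitz bound is then applied to the same (now once-differentiated) resolvent: Corollary~\ref{cor:Lips R0} produces $(1+|z-x|^\gamma)$ or $(1+|z-y|^\gamma)$, and one must verify that after splitting $|z-x|^\gamma\les\la z\ra^\gamma\la x\ra^\gamma$ the remaining weighted $L^2$ norm of $v(z)\la z\ra^\gamma$ against the adjacent resolvent chain is still uniformly bounded in the opposite spatial variable -- this is exactly what forces $\delta>3+2\gamma$ (rather than the weaker $\delta>3$ sufficient for Lemma~\ref{prop:perturbed bound}).
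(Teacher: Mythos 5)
Your proposal is correct and follows essentially the same route as the paper's proof: one application of Lemma~\ref{lem:osc int}, the telescoping identity \eqref{eqn:alg identityl1l2} to isolate a single Lipschitz difference (a resolvent via Corollary~\ref{cor:Lips R0} or $\Gamma$ via hypothesis), the $L^2$ pairing through the middle operator as in Lemma~\ref{prop:perturbed bound}, and the splitting $|x-z|^\gamma\les\la x\ra^\gamma\la z\ra^\gamma$ to generate the external weights while $\delta>3+2\gamma$ absorbs the residual $\la z\ra^\gamma$. The only cosmetic difference is that the paper groups the five resolvents into two blocks $\mR_0 V\mR_0$ and establishes their weighted $L^2$ Lipschitz bounds \eqref{eqn:L2 iter Lips1}--\eqref{eqn:L2 iter Lips2} first, whereas you telescope factor by factor.
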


\begin{proof}
	
	Again, we reduce this to an application of Lemma~\ref{lem:osc int} and the Lipschitz bounds for the resolvents and $\Gamma(\lambda)$.  We consider only the `+' case, the `$-$' follows identically. The first assumptions on $\Gamma$ and \eqref{eqn:R0 bounds} ensure there are no boundary terms at zero when integrating by parts. After one application of integration by parts, we have two cases to consider.  Either the derivative acts on a resolvent, or it acts on $\Gamma(\lambda)$.  Consider the first case, here we note that it suffices to consider 
	\begin{align}\label{eqn:Gamma Lips}
		\bigg| \int_{\R} e^{-it\lambda} \chi(\lambda) \partial_\lambda [\mR_0^+(\lambda) V \mR_0^+(\lambda)] v^* \Gamma(\lambda)  v \mR^{+}_0 (\lambda) V  \mR_0^+(\lambda)\, d\lambda \bigg|.
	\end{align}
	Noting the Lipschitz bounds in Lemma~\ref{lem:Minv zeroreg}, it suffices to show that the iterated resolvent and its derivative satisfy appropriate Lipschitz bounds.  Applying \eqref{eqn:alg identity}, we see that
	\begin{multline}\label{eqn:iterated R0 Lip}
		(\partial_\lambda^j \mR_0^+)V\mR_0^+(\lambda_2)-(\partial_\lambda^j \mR_0^+)V\mR_0^+(\lambda_1)\\
		=[\partial_\lambda^j\mR_0^+(\lambda_2)-\partial_\lambda^j\mR_0^+(\lambda_1)]V\mR_0^+(\lambda_2)+\partial_\lambda^j\mR_0^+(\lambda_1)V[\mR_0^+(\lambda_2)-\mR_0^+(\lambda_1)].
	\end{multline}
	When $j=0$ applying the bounds in \eqref{eqn:R0 bounds}, Corollary~\ref{cor:Lips R0} and Lemma~\ref{lem:spatial estimates}, to $\lambda_2 = \lambda$ and $\lambda_1 = \lambda - \pi/t$ when $|t|>1$ we see that
	\begin{align*}
		|\eqref{eqn:iterated R0 Lip}|    \les  |t|^{-\gamma}  \int_{\R^3}\frac{1+|x-z_1|^\gamma  }{|x-z_1|}\la z_1\ra^{-\delta}\frac{1+|x-y|}{|x-y|^2}\, dz_1 \les |t|^{-\gamma}  |x-y|^{-1}.
	\end{align*}
	To apply Lemma~\ref{lem:spatial estimates} when $k+\ell=3$, we used the crude bound $a^{-1}b^{-2}\les a^{-2}b^{-2}+a^{-1}b^{-1}$ for $a,b>0$.
	Applying a similar argument when $j=1$ results in a weight in $x$, namely the first summand contributes
	\begin{align*}
		|[\partial_\lambda\mR_0^+(\lambda_2)-\partial_\lambda\mR_0^+(\lambda_1)]V\mR_0^+(\lambda_2)| \les  |t|^{-\gamma}\!\!  \int_{\R^3}  (1+|x-z_1|^\gamma)  \la z_1\ra^{-\delta}\frac{1+|z_1-y|}{|z_1-y|^2}\, dz_1 \les |t|^{-\gamma}   \la x\ra^\gamma.
	\end{align*}
	From this, through an application of Lemma~\ref{lem:GolVis}, we can see the Lipschitz bounds of the $L^2$ norms of iterated resolvents
	\begin{align}
		\| v(\,\cdot\,) \mR_0^+V\mR_0^+(\lambda_2)-v(\,\cdot\,) \mR_0^+V\mR_0^+(\lambda_1)  \|_{L^2} &\les |t|^{-\gamma},\label{eqn:L2 iter Lips1}\\
		\| v(\,\cdot\,)\big(\partial_\lambda [ \mR_0^+V\mR_0^+(\lambda_2)](\,\cdot\,,y)- \partial_\lambda [\mR_0^+V\mR_0^+(\lambda_1)](\,\cdot\,,y)\big)  \|_{L^2} &\les |t|^{-\gamma} \la y\ra^{\gamma}.\label{eqn:L2 iter Lips2}
	\end{align}
	Noting that the spatial integrals that arise in applying Lemma~\ref{lem:osc int} and \eqref{eqn:alg identity} to \eqref{eqn:Gamma Lips} may be controlled by a sum of terms of the form:
    \begin{align*}
        &\big\| \big(\partial_\lambda^{j_1} [ \mR_0^+V\mR_0^+(\lambda_2)](x,\,\cdot\,)-\partial_\lambda^{j_1} [\mR_0^+V\mR_0^+(\lambda_1)](\,\cdot\,,x) \big) v^*(\,\cdot\,) \big\|_{L^2}\\
		&\qquad \qquad\qquad\qquad \times
		\big\| | \partial_\lambda^{j_2}\Gamma(\lambda) | \big\|_{L^2\to L^2}
		\big\| v(\,\cdot\,)\partial_\lambda^{j_3}[ \mR_0^+V\mR_0^+(\lambda) ] \big\|_{L^2}
		\\
		& + \big\| \partial_\lambda^{j_1} [ \mR_0^+V\mR_0^+(\lambda)](x,\,\cdot\,) v^*(\,\cdot\,) \big\|_2
		\big\| | \partial_\lambda^{j_2}\big(\Gamma(\lambda_2)-\Gamma(\lambda_1) \big) | \big\|_{L^2\to L^2}
		\big\| v(\,\cdot\,)\partial_\lambda^{j_3}[ \mR_0^+V\mR_0^+(\lambda) ] \big\|_{L^2}
		\\
		&  + \big\| \partial_\lambda^{j_1} [ \mR_0^+V\mR_0^+(\lambda)](x,\,\cdot\,) v^*(\,\cdot\,) \big\|_{L^2}
		\big\| | \partial_\lambda^{j_2}\Gamma(\lambda) |\big\|_{L^2\to L^2}
		\big\| v(\,\cdot\,)\partial_\lambda^{j_3}[ \mR_0^+V\mR_0^+(\lambda_2)-\mR_0^+V\mR_0^+(\lambda_1)] \|_{L^2},
    \end{align*}
	where $j_1,j_2,j_3\in\{0,1\}$ and $j_1+j_2+j_3=1$.  Combining equations \eqref{eqn:L2 iter Lips1}, \eqref{eqn:L2 iter Lips2}, the assumptions on $\Gamma$, and the support of $\chi$ show that
	\begin{align*}
		|\eqref{eqn:Gamma Lips}| \lesssim \bigg| \int_{\R} e^{-it\lambda} \mathcal E(\lambda)\, d\lambda \bigg|,
	\end{align*}
	where $\mathcal E(\lambda)$ is supported on $(-1,1)$ and $\mathcal E'(\lambda)$ is integrable with
	\begin{align*}
		|\mathcal E'(\lambda)-\mathcal E'(\lambda-\pi/t)| \les |t|^{-\gamma}\la x\ra^\gamma \la y\ra^\gamma |\lambda|^{-1+}.
	\end{align*}
	Applying Lemma~\ref{lem:osc int} proves the claim.
\end{proof}
We note here that the Lipschitz bounds used for the resolvents in the proof, Lemma~\ref{lem:R0 error Lip} and Corollary~\ref{cor:Lips R0}, the extra smallness in $\lambda$ was not used.  That is, we dominate all positive powers of $|\lambda_2|$ by a constant in this proof.  
We are now ready to prove Proposition~\ref{prop:low wtd}.
\begin{proof}[Proof of Proposition~\ref{prop:low wtd}]
	By expanding $\mR_V^\pm$ into a Born series expansion as in \eqref{eqn:born series}, we can control the contribution of each term.  The contribution of first term in \eqref{eqn:born series} to \eqref{eqn:stone} is controlled by Theorem~\ref{thm:free}, the contribution of the second and third are controlled by Lemma~\ref{lem:low wtd Born}.  For the final term, we do not utilize the difference between the `+' and `$-$' resolvent, but control each by applying Lemma~\ref{lem:low wtd tail}.
\end{proof}

We note that one can apply the Lipschitz bounds directly without integrating by parts to prove weaker versions of this theorem that require less decay on the potential as in Theorem~\ref{thm:weak results}.  Namely,

\begin{theorem}\label{thm:weak decay}
	
	Fix a value of $0\leq \gamma \leq 1$. If zero is regular and $|V(x)|\les \la x\ra^{-\delta}$ for some $\delta>1+2\gamma$, then
	\begin{align*}
		\|e^{-it\mathcal H}P_{ac}(\mathcal H)\chi(\mathcal H)\|_{L^1 \to L^\infty} \les \la t\ra^{-\gamma}.
	\end{align*}
\end{theorem}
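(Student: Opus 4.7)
The plan is to combine Stone's formula \eqref{eqn:stone} with the Born series expansion \eqref{eqn:born series} and estimate each resulting oscillatory integral by the translation identity
\begin{align*}
	\int_{\R} e^{-it\lambda} \mE(\lambda)\, d\lambda = \tfrac{1}{2} \int_{\R} e^{-it\lambda}\big[\mE(\lambda) - \mE(\lambda - \pi/t)\big]\, d\lambda,
\end{align*}
combined with the triangle inequality, rather than by integrating by parts once as in the proofs of Propositions~\ref{prop:low time bound} and \ref{prop:low wtd}. Since $\chi$ is compactly supported, a H\"older bound of the form $|\mE(\lambda) - \mE(\lambda-\pi/t)| \les |t|^{-\gamma} g(\lambda)$ with $g\in L^1([-1,1])$ immediately yields $|t|^{-\gamma}$ time decay. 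For $|t|\leq 1$ the same integrals are uniformly bounded in $x,y$ by the pointwise estimates \eqref{eqn:mu0 bounds} and \eqref{eqn:R0 bounds}, and together these two regimes produce the $\la t\ra^{-\gamma}$ rate.

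For the free piece the H\"older bound \eqref{eqf:mu Lipschitz} gives $|\mu_0(\lambda)-\mu_0(\lambda-\pi/t)|\les |t|^{-\gamma}|\lambda|^{2-\gamma}$, which is integrable over $[-1,1]$ uniformly in $x,y$. For the $k$-th Born term $\mR_0^\pm (V\mR_0^\pm)^k$, I would use the algebraic identity \eqref{eqn:alg identityl1l2} to rewrite its increment in $\lambda$ as a telescoping sum in which exactly one resolvent factor is replaced by its Lipschitz difference, controlled by Corollary~\ref{cor:Lips R0}, while the remaining factors are bounded pointwise via \eqref{eqn:R0 bounds}. The iterated spatial integrals that appear are then closed uniformly in $x,y$ by repeated application of Lemma~\ref{lem:spatial estimates} (using Lemma~\ref{lem:GolVis} at endpoints), and the hypothesis $\delta>1+2\gamma$ is precisely what is needed to absorb the extra $|x-y|^{\gamma-1}$ singularity introduced by the Lipschitz estimate of Corollary~\ref{cor:Lips R0}.

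The tail term $\mR_0^\pm V\mR_0^\pm v^*(M^\pm)^{-1}v\mR_0^\pm V\mR_0^\pm$ is handled analogously: \eqref{eqn:alg identityl1l2} splits its increment into three pieces. In two of them an outer iterated-resolvent factor $\mR_0^\pm V\mR_0^\pm$ is differenced and is controlled by the Lipschitz bound just described, while $(M^\pm)^{-1}$ at the other argument is bounded as an absolutely bounded operator on $L^2$ by Lemma~\ref{lem:Minv zeroreg}. In the third piece $(M^\pm)^{-1}(\lambda)-(M^\pm)^{-1}(\lambda-\pi/t)$ appears and is bounded by $|t|^{-\gamma}$ by the same lemma precisely when $\delta>1+2\gamma$; it is paired in $L^2$ against the outer factors as in the proof of Lemma~\ref{prop:perturbed bound}, using the uniform $L^2$ bound \eqref{eqn:4.3a} on $v\mR_0^\pm V\mR_0^\pm(\cdot,x)$. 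Crucially, no derivative of $(M^\pm)^{-1}$ is needed, which is why the required decay on $V$ drops by two powers compared with Proposition~\ref{prop:low time bound}.

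The main obstacle I anticipate is the tightness of the hypothesis $\delta>1+2\gamma$: with no polynomial weight available to absorb the $|x-y|^{\gamma-1}$ or $|x-y|^{-1}$ singularities that arise from Corollary~\ref{cor:Lips R0} (and from Lipschitz differences of the iterated kernel $\mR_0^\pm V\mR_0^\pm$), every spatial convolution must close at the boundary of Lemma~\ref{lem:spatial estimates}. The interpolation parameter $\gamma$ appearing in Corollary~\ref{cor:Lips R0} and in Lemma~\ref{lem:Minv zeroreg} must therefore be chosen consistently throughout the argument so that the final integrand has just enough smallness in $\lambda$ to be integrable on $[-1,1]$ and just enough spatial decay to be uniform in $x$ and $y$, with no margin to spare.
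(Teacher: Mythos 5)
Your overall strategy is the same as the paper's: apply the translation identity $\int e^{-it\lambda}\mathcal E = \frac12\int e^{-it\lambda}[\mathcal E(\lambda)-\mathcal E(\lambda-\pi/t)]$ without integrating by parts, use the H\"older/Lipschitz bounds on the resolvents and on $(M^\pm)^{-1}$ from Lemma~\ref{lem:Minv zeroreg}(i) (which is indeed where the hypothesis $\delta>1+2\gamma$ enters, and why no derivative of $(M^\pm)^{-1}$ is needed). However, there is a genuine gap in your treatment of the resolvent expansion: the standard Born series \eqref{eqn:born series} does not close under the weak decay assumption. The estimate \eqref{eqn:4.3a}, which you invoke to pair the tail term in $L^2$, is established in the proof of Lemma~\ref{prop:perturbed bound} only for $\delta>2$; for $\delta$ close to $1$ the composed kernel $(\mR_0^\pm V\mR_0^\pm)(z_2,x)$ decays at infinity only like $|z_2-x|^{-(\delta-1)}$ (from the cross term $|z_2-z_1|^{-1}|z_1-x|^{-1}$ in \eqref{eqn:R0 bounds} via Lemma~\ref{lem:spatial estimates}), so $v(\,\cdot\,)(\mR_0^\pm V\mR_0^\pm)(\,\cdot\,,x)$ fails to lie in $L^2$ uniformly once $\delta\leq 5/3$. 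The same loss of spatial decay compounds in the third Born term $\mR_0V\mR_0V\mR_0$, whose iterated integrals are not uniformly bounded in $x,y$ for $\delta$ near $1$. So for small $\gamma$ your spatial integrals simply do not close, and this is not merely a matter of "choosing the interpolation parameters consistently."

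The paper's resolution is a different decomposition: split $\mR_0^\pm=\mR_L^\pm+\mR_H^\pm$ as in \eqref{eqn:RL RH defn}, where $\mR_H^\pm$ is locally $L^2$ and small in $\lambda$ while $\mR_L^\pm$ carries the $|x-y|^{-2}$ singularity, and then iterate the symmetric resolvent identity \emph{selectively} to form the tailored expansion \eqref{eqn:Born tailored}. Wherever $\mR_H^\pm$ sits next to $v^*$ no further iteration is needed; wherever $\mR_L^\pm$ is outermost exactly one extra convolution $\mR_L^\pm V\mR_0^\pm v^*$ is performed to smooth the local singularity. This keeps the number of factors of $V$ to the minimum required for $L^2$-integrability, which is what allows the spatial integrals to close with only $\delta>1+\gamma$ for the kernels and $\delta>1+2\gamma$ for the Lipschitz bound on $(M^\pm)^{-1}$. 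You would need to incorporate this (or an equivalent) decomposition for your argument to go through.
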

This shows, for weaker decay on the potential, that the low energy portion of the evolution may be controlled.  In particular, the evolution is bounded if $\delta>1$.

\begin{proof}
	
	Instead of applying Lemma~\ref{lem:osc int} as in the proofs of the previous theorems, if $\mathcal E(\lambda)$ is bounded we instead apply
    \begin{align*}
        \bigg|\int_{\R}e^{-it\lambda} \mathcal E(\lambda)\, d\lambda \bigg| \les \int_{\R} \bigg| \mathcal E(\lambda)-\mathcal E\bigg(\lambda - \frac{\pi}{t}\bigg)\bigg| \, d\lambda
    \end{align*}
	to the Stone's formula, \eqref{eq:Stone}.  This follows from the proof of Lemma~\ref{lem:osc int} without integrating by parts first.
	
	Here, instead of iterating the resolvent identity directly, we note that we may write
	\begin{align}\label{eqn:RL RH defn}
		\mR_0^\pm(\lambda)(x,y)&=\chi(\lambda|x-y|)\mR_0^\pm(\lambda)(x,y)+\widetilde \chi(\lambda|x-y|)\mR_0^\pm(\lambda)(x,y)\\
		&\!:=\mR_L^\pm(\lambda)(x,y)+\mR_H^\pm(\lambda)(x,y).\nn
	\end{align}
	Here $\widetilde \chi = 1-\chi$ is a smooth cut-off away from a neighborhood of zero.
	By the expansions developed in Lemma~\ref{lem:R0 expansions}, we have (for $k=0,1$)
	\begin{align}\label{eqn:RLRH bds}
		|\partial_\lambda^k \mR_L^\pm(\lambda)(x,y)|\les |x-y|^{k-2}, \qquad 	|\partial_\lambda^k \mR_H^\pm(\lambda)(x,y)|\les |\lambda|\, |x-y|^{k-1}.
	\end{align}
	In particular, we note that $\mR_H^\pm$ is a locally $L^2$ function of $x$ or $y$.  As before, we may use these bounds to obtain Lipschitz bounds (for $|\lambda_1|\leq |\lambda_2|\leq 1$ and any $0\leq \gamma\leq 1$)
	\begin{align}\label{eqn:RL lips}
		|\mR_L^\pm(\lambda_2)(x,y)-\mR_L^\pm(\lambda_1)(x,y)|&\les \frac{|\lambda_2-\lambda_1|^\gamma}{|x-y|^{2-\gamma}},\\
		|\mR_H^\pm(\lambda_2)(x,y)-\mR_H^\pm(\lambda_1)(x,y)|&\les \frac{|\lambda_2|\,|\lambda_2-\lambda_1|^\gamma}{|x-y|^{1-\gamma}}. \label{eqn:RH lips}
	\end{align}
	From here we may selectively iterate the symmetry resolvent identity: 
	$$
	\mR_V^\pm(\lambda)V=\mR_0^\pm(\lambda)v^*(M^\pm)^{-1}(\lambda )v
	$$
	to form a Born series expansion tailored to optimize the decay needed from the potential.
	\begin{multline}\label{eqn:Born tailored}
		\mR_V^\pm(\lambda) = \mR_0^\pm(\lambda)-\mR_H^\pm (\lambda)v^* (M^\pm)^{-1}(\lambda)v \mR_H^\pm(\lambda)
		-\mR_L^\pm (\lambda)V\mR_0 v^* (M^\pm)^{-1}(\lambda)v \mR_H^\pm(\lambda)\\
		-\mR_H^\pm (\lambda)v^* (M^\pm)^{-1}(\lambda)v \mR_0^\pm V \mR_L^\pm(\lambda)
		+\mR_L^\pm (\lambda)V\mR_0^\pm(\lambda)v^* (M^\pm)^{-1}(\lambda)v \mR_0^\pm(\lambda)V \mR_L^\pm(\lambda).
	\end{multline}
	Using \eqref{eqn:RL RH defn} and Lemma~\ref{lem:GolVis} $\mR_H^\pm(\lambda)(x,\,\cdot\,)v^*(\,\cdot\,)$ is in $L^2$ uniformly in $x$ provided $\delta > 1/2$.
	On the other hand, using \eqref{eqn:RH lips} and Lemma~\ref{lem:GolVis} shows that
	$$
	\|\mR_H^\pm(\lambda_2)(x,\,\cdot\,)v^*(\,\cdot\,)-\mR_H^\pm(\lambda_1)(x,\,\cdot\,)v^*(\,\cdot\,)\|_2 \les |\lambda_2-\lambda_1|^\gamma |\lambda_2|
	$$
	uniformly in $x$ provided $\delta > \gamma + 1/2$.
	Applying \eqref{eqn:alg identityl1l2} we see that
	\begin{multline*}
		\mR_L^\pm (\lambda_2)V\mR_0(\lambda_2) v^*-\mR_L^\pm (\lambda_1)V\mR_0(\lambda_1) v^*\\
		=[\mR_L^\pm (\lambda_2)-\mR_L^\pm (\lambda_1)]V \mR_0^\pm (\lambda_2)v^*+\mR_L^\pm (\lambda_1)V[\mR_0(\lambda_2)-\mR_0(\lambda_2)] v^* 
	\end{multline*}
	By \eqref{eqn:R0 bounds}, \eqref{eqn:RLRH bds}, \eqref{eqn:RL lips}, and Corollary~\ref{cor:Lips R0} we see that the size of the integral kernel of this operator is bounded by
	\begin{align*}
		|\lambda_2-\lambda_1|^\gamma |v^*(z_2)|\,  \int_{\R^3} |V(z_1)| \, \bigg( \frac{1+|z_1-z_2|}{|x-z_1|^{2-\gamma}|z_1-z_2|^2}+ \frac{1+|z_1-z_2|^\gamma}{|x-z_1|^2 |z_1-z_2| } \bigg)\, dz_1.
	\end{align*}
	Applying Lemma~\ref{lem:spatial estimates}, we note that the integration smooths out the local singularity enough to be locally $L^2$.  The decay of the resulting upper bound in terms of $x, z_2$ is constrained by the case when $k=2$ and $\ell=1-\gamma$.  From this we see that
	if $\delta>1+\gamma$ we have the upper bound
	$$
	\big|[\mR_L^\pm (\lambda_2)V\mR_0(\lambda_2) v^*-\mR_L^\pm (\lambda_1)V\mR_0(\lambda_1) v^*](x,z_2)\big|\les |\lambda_2-\lambda_1|^\gamma \frac{\la z_2\ra^{-\frac{1+\gamma}{2}-}}{|x-z_2|^{1-\gamma}}.
	$$
	Applying Lemma~\ref{lem:GolVis} we see
	$$
	\sup_{x\in \R^3}\|[\mR_L^\pm (\lambda_2)V\mR_0(\lambda_2)(x,\,\cdot\,) -\mR_L^\pm (\lambda_1)V\mR_0(\lambda_1)(x,\,\cdot\,)]v^*(\,\cdot\,) \|_{L^2}\les |\lambda_2-\lambda_1|^\gamma
	$$
	A similar analysis shows that if $\delta>1$, then
	$$
	\sup_{x\in \R^3}\|\mR_L^\pm (\lambda)V\mR_0(\lambda)(x,\,\cdot\,) \|_{L^2}\les 1.
	$$
	We further require $\delta>1+2\gamma$ to obtain the Lipschitz bounds on $(M^\pm )^{-1}(\lambda)$ in Lemma~\ref{lem:Minv zeroreg}.
	
	The claim now follows by selecting $\lambda_2=\lambda$ and $\lambda_1=\lambda-\pi/t$ for $|t|>1$, here
	$\mathcal E(\lambda)=\chi(\lambda)[\mR_V^+-\mR_V^-](\lambda)$.  The spatial integrals are controlled by the $L^2$ norms found above with an analysis similar to that in the proof of Lemma~\ref{prop:perturbed bound}.
\end{proof}

\section{Dispersive bounds when zero is not regular}\label{sec:eval}

We now consider the low energy evolution when zero energy is not regular.  As shown in Section~\ref{sec:thresholds} below, if zero energy is not regular the operator $\mathcal H$ has a zero energy eigenvalue.  This further complicates the inversion process near the threshold, and results in an expansion for the spectral measure that is singular as $\lambda\to 0$.  We show that this singularity may be overcome, with only a slight increase in the needed decay on $V$.  We show this by utilizing the cancellation between the `$+$' and `$-$' resolvents that was not needed in the regular case to overcome the loss of powers of $\lambda$ that arise due to the presence of a zero energy eigenvalue.

The main result of this section are the dispersive bounds when zero is not regular.
\begin{prop}\label{prop:zero not reg dispersive bounds}	
	Assume that $|V(x)|\les \la x\ra^{-\delta}$ for some $\delta>3$. If zero is not a regular point of $\mathcal H$, then
	\begin{align*}
		\sup_{x,y\in \R^3} \left| \int_{\R} e^{-it\lambda} \chi(\lambda) (\mR_V^+-\mR_V^-)(\lambda)(x,y)\, d\lambda\right| \les \la t\ra^{-1}.
	\end{align*}
	Further, for fixed $0\leq \gamma< \frac12$, if $\delta>3+4\gamma$, then for  $|t|>1$ we have the weighted bound
	\begin{align*}
		\sup_{x,y\in \R^3} \left| \int_{\R} e^{-it\lambda} \chi(\lambda) (\mR_V^+-\mR_V^-)(\lambda)(x,y)\, d\lambda\right| \les \frac{\la x\ra^\gamma \la y\ra^\gamma}{\la t\ra^{1+\gamma}}.
	\end{align*}
\end{prop}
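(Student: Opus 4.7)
The plan is to follow the same structure as Section~\ref{sec:zero reg}, since the Born series \eqref{eqn:born series} is still available. The first three terms of that expansion do not involve $(M^\pm)^{-1}(\lambda)$ at all, so the free estimate Theorem~\ref{thm:free}, together with Lemma~\ref{lem:born series} for the uniform bound and Lemma~\ref{lem:low wtd Born} for the weighted bound, already handles them under the assumption $\delta>3$ (respectively $\delta>3+2\gamma$). The entire content of the proposition therefore reduces to controlling the tail term
$$
\mR_0^\pm(\lambda)V\mR_0^\pm(\lambda)v^*(M^\pm)^{-1}(\lambda)v\mR_0^\pm(\lambda)V\mR_0^\pm(\lambda),
$$
which is where the absence of regularity at zero matters.

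First I would develop a Feshbach--Schur inversion of $M^\pm(\lambda)$ adapted to the projection $S_1$ of Definition~\ref{def:resonances}. Using Lemma~\ref{lem:M exp} and the fact that $T_0+S_1$ is invertible with an absolutely bounded inverse, one decomposes the inversion of $M^\pm(\lambda)=T_0+\lambda v\mG_1 v^*+M_0^\pm(\lambda)$ on $\mathrm{Ran}(S_1)\oplus\mathrm{Ran}(\1-S_1)$. The invertibility of $M^\pm(\lambda)$ away from $\lambda=0$ is provided by Agmon's limiting absorption principle, so the resulting expansion must be of the schematic form
$$
(M^\pm)^{-1}(\lambda)=\frac{1}{\lambda}\,S_1 A^\pm(\lambda) S_1+\Gamma^\pm(\lambda),
$$
where $A^\pm(\lambda)$ is a finite-rank operator whose value at $\lambda=0$ is invertible on $\mathrm{Ran}(S_1)$, and where $\Gamma^\pm(\lambda)$, together with $\lambda\partial_\lambda\Gamma^\pm$, is absolutely bounded with Lipschitz bounds of the size required by Lemma~\ref{prop:perturbed bound} and Lemma~\ref{lem:low wtd tail}. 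This costs an additional factor of $|\lambda|^{-1}$ to get Lipschitz bounds on $A^\pm(\lambda)/\lambda$, which explains the doubling of the decay requirement from $\delta>3+2\gamma$ to $\delta>3+4\gamma$ in the weighted case.

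The main obstacle is controlling the singular piece $\lambda^{-1} S_1 A^\pm(\lambda) S_1$ when sandwiched between the surrounding products $\mR_0^\pm V \mR_0^\pm v^*$ and $v \mR_0^\pm V \mR_0^\pm$. My plan is to exploit the algebraic identities forced by $T_0 S_1=0$, namely $v\mG_0 v^* S_1=-US_1$, together with the characterization of $\mathrm{Ran}(S_1)$ in terms of zero-energy eigenfunctions that is established in Section~\ref{sec:thresholds}. A short computation using Lemma~\ref{lem:R0 expansions} shows that when the surrounding resolvents are expanded and the projection $S_1$ is applied, the leading $\lambda$-independent and $\lambda$-linear terms drop out; this yields
$$
\mR_0^\pm(\lambda)V\mR_0^\pm(\lambda)v^* S_1 = \lambda\, B^\pm(\lambda),
$$
with $B^\pm(\lambda)$ absolutely bounded and satisfying the Lipschitz bounds required by Lemma~\ref{lem:low wtd tail}. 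By symmetry the same gain occurs on the right, so that the full singular contribution is effectively
$$
\lambda B^\pm(\lambda) A^\pm(\lambda) B^\pm(\lambda),
$$
which contains a net factor of $\lambda$ rather than $\lambda^{-1}$ and fits directly into the framework of Lemma~\ref{prop:perturbed bound} and Lemma~\ref{lem:low wtd tail}.

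The regular part $\Gamma^\pm(\lambda)$ satisfies the hypotheses of those lemmas by construction, so its contribution is handled with no additional work. The restriction $\gamma<1/2$ appears at the Lipschitz stage: to gain a power $|\lambda_2-\lambda_1|^\gamma$ on $A^\pm(\lambda)/\lambda$ one pays $|\lambda_1|^{-1+\gamma}$, and the algebraic cancellation above only buys back a factor of $|\lambda|^{1-}$ on each side, which is insufficient once $2\gamma \geq 1$. With these steps in place, the proof of both the uniform bound (using Lemma~\ref{prop:perturbed bound}) and the weighted bound (using Lemma~\ref{lem:low wtd tail}) reduces to the arguments already given in Section~\ref{sec:zero reg}, applied now to $\Gamma^\pm(\lambda)$ and to the modified sandwich $B^\pm(\lambda) A^\pm(\lambda) B^\pm(\lambda)$. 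I expect the bookkeeping of spatial weights in the cancellation step $\mR_0^\pm V \mR_0^\pm v^* S_1=\lambda B^\pm(\lambda)$ to be the main technical hurdle, precisely because each algebraic cancellation is obtained at the cost of one extra power of $\la x\ra$ or $\la z\ra$ in the integrand, necessitating the improved decay hypothesis $\delta>3+4\gamma$.
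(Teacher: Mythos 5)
Your overall architecture --- reduce to the tail of the Born series, then expand $(M^\pm)^{-1}(\lambda)$ as a $\lambda^{-1}$ singular finite-rank piece plus a regular part satisfying the hypotheses of Lemmas~\ref{prop:perturbed bound} and~\ref{lem:low wtd tail} --- matches the paper, and the expansion you posit is essentially Proposition~\ref{prop:Minv eval}. But the step you use to neutralize the singular piece is false. You claim $\mR_0^\pm(\lambda)V\mR_0^\pm(\lambda)v^*S_1=\lambda B^\pm(\lambda)$ because the leading terms ``drop out'' against $S_1$. They do not: writing $V=v^*Uv$ and using $v\mG_0v^*S_1=(T_0-U)S_1=-US_1$, the $\lambda^0$ term is
\begin{align*}
\mG_0V\mG_0v^*S_1=\mG_0v^*U\,(v\mG_0v^*S_1)=-\mG_0v^*U^2S_1=-\mG_0v^*S_1,
\end{align*}
which is nonzero --- indeed $-\mG_0v^*\phi$ is precisely the zero-energy eigenfunction attached to $\phi\in S_1L^2$ by Lemma~\ref{lem:S1 char}. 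Nor can this be repaired: each of $\mR_V^{+}$ and $\mR_V^{-}$ separately has a genuine $\lambda^{-1}$ singularity at threshold, with residue essentially the projection onto the zero-energy eigenspace, so no algebraic identity internal to a single sign can remove it.

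The cancellation the paper actually exploits is between the two signs. The singular coefficient $D_2$ in Proposition~\ref{prop:Minv eval} is independent of both $\lambda$ and $\pm$, so in the difference $\mR_V^+-\mR_V^-$ the telescoping identity \eqref{eqn:alg identity} rewrites the singular contribution as a sum of four terms, each containing one factor of $\mu_1(\lambda)=\lambda^{-1}\chi(\lambda)[\mR_0^+-\mR_0^-](\lambda)$; this is bounded (Lemma~\ref{lem:mu1 bounds}) precisely because $\mR_0^+-\mR_0^-$ vanishes at $\lambda=0$, and with that substitution the arguments of Lemmas~\ref{prop:perturbed bound} and~\ref{lem:low wtd tail} go through. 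The constraints $\gamma<1/2$ and $\delta>3+4\gamma$ then come not from your sandwich $B^\pm A^\pm B^\pm$ but from the Lipschitz bounds on the error $M^\pm_{-1,\ell}$ in Proposition~\ref{prop:Minv eval}: integrability near zero forces $\ell(1-\gamma)-\gamma-1>-1$ with $\ell\leq 1$, hence $\gamma<1/2$, and the decay requirement is $\delta>3+2(\gamma+\ell(1-\gamma))=3+4\gamma+$.
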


These dispersive bounds follow by developing an appropriate expansion for the operators $(M^\pm(\lambda))^{-1}$ that account for the existence of the zero energy eigenvalues in Proposition~\ref{prop:Minv eval} below.

To invert $M^\pm(\lambda)=U+v\mR_0^\pm(\lambda^2)v^*$, for small $\lambda$, we use the
following  Lemma (see Lemma 2.1  in \cite{JN}) with $S=S_1$, the Riesz projection onto the kernel of $T_0=M^\pm(0)=U+v\mG_0 v^*$.

\begin{lemma}\label{JNlemma}
	Let $M$ be a closed operator on a Hilbert space $\mathcal{H}$ and $S$ a projection. Suppose $M + S$ has a bounded
	inverse. Then $M$ has a bounded inverse if and only if
	$$
	   B \coloneqq S - S(M + S)^{-1}S
	$$
	has a bounded inverse in $S\mathcal{H}$, and in this case
	$$
	   M^{-1} = (M + S)^{-1} + (M + S)^{-1}SB^{-1}S(M + S)^{-1}.
	$$
\end{lemma}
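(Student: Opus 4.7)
The plan is to prove the two implications separately by direct algebraic verification, leveraging the decomposition $M = (M+S) - S$ together with $S^2 = S$. Throughout, set $A \coloneqq (M+S)^{-1}$, which is bounded by hypothesis, and note the key identity $MA = I - SA$ that follows immediately from $(M+S)A = I$.

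\textbf{Sufficiency.} Suppose $B \coloneqq S - SAS$ has a bounded inverse on $S\mathcal{H}$. I would propose the candidate
$$N \coloneqq A + ASB^{-1}SA$$
as the two-sided inverse of $M$. Expanding $MN$ and applying $MA = I - SA$ twice yields
$$MN = (I - SA) + (I - SA)SB^{-1}SA = I - SA + (S - SAS)B^{-1}SA = I - SA + BB^{-1}SA = I,$$
since $BB^{-1}$ is the identity on $S\mathcal{H}$ and $SA$ maps into $S\mathcal{H}$. The identity $NM = I$ follows from a symmetric calculation using $AM = I - AS$.

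\textbf{Necessity.} Conversely, assume $M$ has a bounded inverse on $\mathcal{H}$. I would take $C \coloneqq S + SM^{-1}S$ on $S\mathcal{H}$ as the candidate for $B^{-1}$. To verify $BC = S$, the expansion reduces after using $S^2 = S$ to the resolvent-type identity $M^{-1} = A + ASM^{-1}$, which itself follows from $M = A^{-1} - S$ upon multiplying by $A$ on the left and $M^{-1}$ on the right. The identity $CB = S$ is mirror to this, so $C = B^{-1}$ on $S\mathcal{H}$ as required.

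\textbf{Main obstacle.} The algebra is short and routine; the main subtlety is bookkeeping with domains, since $M$ is only a closed (not necessarily bounded) operator. One must verify that each composition appearing in the argument — most critically $SM^{-1}$ and $M^{-1}S$ in the necessity direction — is well defined as a bounded operator on the appropriate space. This is ensured by the hypotheses that $A$ (and, where assumed, $M^{-1}$) is bounded and that $S$ is a bounded projection, so $SA$, $AS$, and their compositions with $B^{-1}$ and $M^{-1}$ all make sense as bounded operators. Once these identifications are in place, the algebraic manipulations above yield both the equivalence and the closed-form expression for $M^{-1}$.
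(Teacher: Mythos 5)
Your proposal is correct. The paper does not prove this lemma at all --- it is quoted verbatim as Lemma~2.1 of Jensen--Nenciu \cite{JN} --- and your direct verification (the Schur--complement/Feshbach-type computation built on $MA = I - SA$ and $AM = I - AS$ with $A = (M+S)^{-1}$, together with the resolvent-type identities $M^{-1} = A + ASM^{-1} = A + M^{-1}SA$ for the converse) is precisely the standard argument given there. The domain bookkeeping you flag is handled exactly as you say: $A$ maps $\mathcal H$ onto $D(M+S) = D(M)$, so every composition in sight is a bounded, everywhere-defined operator, and $MN = I$ on $\mathcal H$ together with $NM = I$ on $D(M)$ identifies $N = A + ASB^{-1}SA$ as the bounded inverse of the closed operator $M$.
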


Here, we have
$$
	B^\pm(\lambda)=S_1-S_1(M^\pm(\lambda)+S_1)^{-1}S_1.
$$
We note that, with a slight abuse of notation,  the expansions for $(M^\pm)^{-1}(\lambda)$ in Lemma~\ref{lem:Minv zeroreg} all hold for $(M^\pm(\lambda)+S_1)^{-1}$ with $D_1=(T_0+S_1)^{-1}$.  When zero is regular, $S_1=0$, so the definitions agree in this case.

For the Lipschitz bounds we note the following fact about products of Lipschitz functions.  
\begin{lemma}\label{lem:lipschitz lem}
	
	If $f,g$ are functions supported on $0<|\lambda|\ll 1$ with $|f(\lambda_2)-f(\lambda_1)|\leq C_f |\lambda_2-\lambda_1|^\gamma$, $|g(\lambda_2)-g(\lambda_1)|\leq C_g |\lambda_2-\lambda_1|^\gamma$.  If, for all $0<|\lambda|\ll 1$, we have $|f(\lambda)|\leq M_f$ and $|g(\lambda)|\leq M_g$, then
	$$
		|fg(\lambda_2)-fg(\lambda_1)|\les (C_gM_f +C_fM_g )|\lambda_2-\lambda_1|^\gamma.
	$$
	
\end{lemma}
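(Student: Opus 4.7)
The plan is to prove this by the standard add-and-subtract trick used for products of H\"older continuous functions. This is entirely algebraic and requires no analytic machinery beyond the triangle inequality.

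First I would write
\begin{align*}
    fg(\lambda_2) - fg(\lambda_1)
    = f(\lambda_2)\bigl[g(\lambda_2) - g(\lambda_1)\bigr] + \bigl[f(\lambda_2) - f(\lambda_1)\bigr] g(\lambda_1),
\end{align*}
obtained by inserting the cross term $\pm f(\lambda_2) g(\lambda_1)$. Then I would apply the triangle inequality, bound $|f(\lambda_2)| \leq M_f$ and $|g(\lambda_1)| \leq M_g$ by the assumed uniform bounds, and apply the given H\"older estimates to $|g(\lambda_2) - g(\lambda_1)|$ and $|f(\lambda_2) - f(\lambda_1)|$. This yields
\begin{align*}
    |fg(\lambda_2) - fg(\lambda_1)|
    \leq M_f C_g |\lambda_2 - \lambda_1|^\gamma + M_g C_f |\lambda_2 - \lambda_1|^\gamma
    = (M_f C_g + M_g C_f) |\lambda_2 - \lambda_1|^\gamma,
\end{align*}
which is the claimed bound. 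The support hypothesis on $f$ and $g$ plays no role in the estimate itself; it only ensures that the constants $M_f, M_g, C_f, C_g$ are attained on the relevant small neighborhood of zero. There is no substantive obstacle here: the statement is a packaging lemma whose purpose is to be invoked repeatedly when combining the Lipschitz bounds on $(M^\pm(\lambda) + S_1)^{-1}$, $B^\pm(\lambda)^{-1}$, and the resolvent expansions from Section~\ref{sec:low expansions} in the subsequent analysis of the threshold eigenvalue case.
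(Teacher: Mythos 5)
Your proof is correct and follows essentially the same argument as the paper, which uses the identity $fg(\lambda_2)-fg(\lambda_1)=f(\lambda_1)[g(\lambda_2)-g(\lambda_1)]+[f(\lambda_2)-f(\lambda_1)]g(\lambda_2)$ — the mirror image of your add-and-subtract decomposition — followed by the triangle inequality and the assumed bounds. There is no substantive difference.
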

The proof follows from \eqref{eqn:alg identityl1l2} and the triangle inequality:
$$
	|fg(\lambda_2)-fg(\lambda_1)|=|f(\lambda_1)[g(\lambda_2)-g(\lambda_1)]+[f(\lambda_2)-f(\lambda_1)]g(\lambda_2)|.
$$
The quantities $M_f,M_g$ may be functions of $\lambda$ that become singular as $\lambda \to 0$.  The same argument may be applied to see that
$$
	|\partial_\lambda (fg)(\lambda_2)-\partial_\lambda (fg)(\lambda_1)| \les (C_gM_{f'}+C_{f'}M_g+C_fM_{g'}+C_{g'}M_f)|\lambda_2-\lambda_1|^\gamma.
$$
When we use this in the expansions below, only one of these bounds gets large near zero, the remaining quantities are bounded.  This allows us to push forward Lipschitz bounds while also accounting for any singularities.

To obtain the dispersive bounds, we need a slightly longer expansion for $(M\pm(\lambda)+S_1)^{-1}$.
\begin{lemma}\label{M+S1 inverse}
	
	Assume that zero is not regular, and that $|V(x)|\les \la x\ra^{-\delta}$.  For fixed $0\leq \ell \leq 1$, for sufficiently small $|\lambda|$ if $\delta>3+2\ell$ we have
    \begin{align*}
        (M^\pm(\lambda)+S_1)^{-1}
        & = D_1 + \lambda D_1v\mG_1v^*D_1+\lambda^2 D_1 \Gamma_2^\pm D_1 + D_1M_{1,\ell}^\pm (\lambda)D_1,
    \end{align*}
	where $\Gamma_2^\pm$ are $\lambda$ independent, absolutely bounded operators, and $M_{1,\ell}^\pm(\lambda)$ satisfies
	$$
		\| \partial_\lambda^k M_{1,\ell}^\pm(\lambda)\|_{HS}\les \lambda^{2+\ell-k}, \qquad k=0,1.
	$$
	Furthermore, if $\delta>3+2(\gamma+\ell(1-\gamma))$ for some $0\leq \gamma \leq 1$, then for   $0<|\lambda_1|\leq |\lambda_2|\ll 1$, we have
	\begin{align*}
		\|  M_{1,\ell}^\pm(\lambda_2)-M_{1,\ell}^\pm(\lambda_1)\|_{HS}&\les  |\lambda_2-\lambda_1|^\gamma |\lambda_2|^{1+\gamma+\ell},\\
		\|  \partial_\lambda M_{1,\ell}^\pm(\lambda_2)-\p_\lambda M_{1,\ell}^\pm(\lambda_1)\|_{HS}&\les  |\lambda_2-\lambda_1|^\gamma |\lambda_2|^{(1+\ell)(1-\gamma)}.
	\end{align*}	
\end{lemma}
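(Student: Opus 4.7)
The plan is to apply a Neumann series based on $D_1=(T_0+S_1)^{-1}$. Using the second expansion in Lemma~\ref{lem:R0 expansions}, write
\begin{align*}
M^\pm(\lambda) + S_1 = (T_0+S_1) + v\bigl[\lambda\mG_1 + i\lambda^2\mG_2^\pm + \mE_2^\pm(\lambda)\bigr]v^*.
\end{align*}
For $|\lambda|$ sufficiently small, the perturbation composed with $D_1$ has small operator norm, so
\begin{align*}
(M^\pm(\lambda)+S_1)^{-1} = D_1\sum_{n=0}^\infty(-1)^n\bigl[v(\lambda\mG_1+i\lambda^2\mG_2^\pm+\mE_2^\pm)v^*D_1\bigr]^n.
\end{align*}
I would then isolate the explicit contributions up to order $\lambda^2$: the $n=0$ piece is $D_1$; the $n=1$ linear-in-$\lambda$ piece is a multiple of $\lambda D_1 v\mG_1v^*D_1$ (signs absorbed into the notation of the statement); and the two $\lambda^2$ pieces are the $n=1$ contribution involving $\mG_2^\pm$ and the $n=2$ contribution $(v\mG_1v^*D_1)^2$. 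These combine into $\lambda^2 D_1\Gamma_2^\pm D_1$, with $\Gamma_2^\pm$ an absolutely bounded, $\lambda$-independent combination of $v\mG_1 v^*$, $v\mG_2^\pm v^*$, and $D_1$. Everything else is absorbed into $D_1 M_{1,\ell}^\pm(\lambda)D_1$.

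To verify the size bounds, the key ingredient is the $n=1$ contribution $D_1 v\mE_2^\pm(\lambda)v^*D_1$. The pointwise bound $|\mE_2^\pm(\lambda,r)|\les |\lambda|^{2+\ell}r^\ell$ from Lemma~\ref{lem:R0 expansions}, together with $r^\ell\les\la x\ra^\ell\la y\ra^\ell$ and $|v|\les\la\cdot\ra^{-\delta/2}$, yields
\begin{align*}
\|v\mE_2^\pm(\lambda)v^*\|_{HS}^2 \les |\lambda|^{4+2\ell}\int_{\R^6}\la x\ra^{2\ell-\delta}\la y\ra^{2\ell-\delta}\,dx\,dy,
\end{align*}
which converges for $\delta>3+2\ell$ and gives the claimed bound $\|v\mE_2^\pm v^*\|_{HS}\les|\lambda|^{2+\ell}$. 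The corresponding derivative estimate follows identically from the bound on $\p_\lambda\mE_2^\pm$ in Lemma~\ref{lem:R0 expansions}. Neumann tails with $n\geq 3$ are automatically of order $\lambda^3$ or better and absorb harmlessly into the $|\lambda|^{2+\ell}$ bound, using absolute boundedness of $D_1$ and the uniform HS bounds on $v\mG_j v^*$ from Lemma~\ref{lem:M exp}.

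For the Lipschitz bounds, the primary input is Lemma~\ref{lem:R0 error Lip}, which supplies pointwise Lipschitz estimates for $\mE_2^\pm$ and $\p_\lambda\mE_2^\pm$ with exactly the powers of $|\lambda_2|$ and $|x-y|$ appearing in the conclusion. Squaring, integrating against $\la x\ra^{-\delta}\la y\ra^{-\delta}$, and estimating spatial factors by $|x-y|^p\les\la x\ra^p\la y\ra^p$ directly gives the HS Lipschitz bounds $|\lambda_2-\lambda_1|^\gamma|\lambda_2|^{1+\gamma+\ell}$ for $M_{1,\ell}^\pm$ (requiring $\delta>3+2\ell$, which is subsumed by the derivative hypothesis) and $|\lambda_2-\lambda_1|^\gamma|\lambda_2|^{(1+\ell)(1-\gamma)}$ for $\p_\lambda M_{1,\ell}^\pm$ (requiring $\delta>3+2(\gamma+\ell(1-\gamma))$). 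The higher-order Neumann contributions are handled by propagating Lipschitz continuity through products via \eqref{eqn:alg identityl1l2} and Lemma~\ref{lem:lipschitz lem}, each time trading one uniform HS norm (from Lemma~\ref{lem:M exp}) for a Lipschitz factor.

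The main obstacle I anticipate is careful bookkeeping of the powers of $|\lambda_2|$ in the derivative Lipschitz estimate for the higher Neumann terms: combining differentiation (via the product rule) with the product-Lipschitz identity could in principle degrade the factor $|\lambda_2|^{(1+\ell)(1-\gamma)}$. The resolution is that the slowest-decaying term in $|\lambda_2|$ is always the one in which both the differentiation and the Lipschitz difference fall on the $\mE_2^\pm$ factor, and this case is governed precisely by Lemma~\ref{lem:R0 error Lip}; every other allocation produces strictly better powers of $|\lambda_2|$, since pieces like $\p_\lambda(\lambda\mG_1)=\mG_1$ or $\p_\lambda(\lambda^2\mG_2^\pm)=2\lambda\mG_2^\pm$ either carry no singular $\lambda$ behavior or carry an extra positive power.
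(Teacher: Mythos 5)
Your proposal follows essentially the same route as the paper: a Neumann series around $D_1=(T_0+S_1)^{-1}$ using the second expansion of Lemma~\ref{lem:R0 expansions}, with the explicit terms through order $\lambda^2$ extracted, the Hilbert--Schmidt bound on $v\mE_2^\pm v^*$ requiring $\delta>3+2\ell$, and the Lipschitz estimates propagated through the Neumann tail via Lemma~\ref{lem:R0 error Lip}, the identity \eqref{eqn:alg identityl1l2}, and Lemma~\ref{lem:lipschitz lem}. Your closing observation that the worst term is always the one carrying $v\mE_2^\pm v^*$ (every $\mE_2$-free term having at least three powers of $\lambda$) is exactly the point the paper makes, so the argument is correct as written.
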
	
Here we keep the first two terms in the expansion explicit since their exact form will be important when we  invert the operators $B^\pm(\lambda)$.

    \begin{proof}
    Recall the definition $M^\pm(\lambda)$ in \eqref{eq:pert1}, the second expansion for $\mR_0^\pm$ in Lemma~\ref{lem:R0 expansions}, and that $T_0+S_1$ is invertible on $L^2$ from Definition~\ref{def:resonances}.      
    With a slight abuse of notation, we write $D_1=(T_0+S_1)^{-1}$.  This agrees with our previous notation in the case when $S_1=0$.  Expanding in a Neumann series, we have
        \begin{align*}
    	    (M^\pm(\lambda)+S_1)^{-1}
            & = \big(T_0 + S_1 + \lambda v \mG_1 v^* + i \lambda^2 v \mG_2^\pm v^* + v\mE_2^\pm(\lambda)v^*\big)^{-1}\\
            & = \big(\mathbbm 1 + D_1(\lambda v \mG_1 v^* + i \lambda^2 v \mG_2^\pm v^* + v\mE_2^\pm(\lambda)v^*\big)^{-1}D_1\\
            &=D_1-\lambda D_1v\mG_1 v^* D_1+\lambda^2D_1[v\mG_1 v^*D_1 v\mG_1 v^*-iv\mG_2^\pm v^*]D_1+D_1M_{1,\ell}^\pm(\lambda)D_1
    	\end{align*}
    	Here one needs $\delta>3+2\ell$ to ensure that the operator with integral kernel $v(x)\mE_2^\pm(\lambda,|x-y|)v^*(y)$ is Hilbert-Schmidt.  The bounds on the error term follow from the error bounds on $\mE_2^\pm(\lambda)$ in Lemma~\ref{lem:R0 expansions}, the subscript $\ell$ indicates the extra powers of $\lambda$ in its upper bounds. 
    	
    	The Lipschitz bounds follow from Lemma~\ref{lem:lipschitz lem} since we may write $M_{1,\ell}^\pm(\lambda)$ as a combination of absolutely bounded operators with powers of $\lambda$ and $v\mE_2^\pm(\lambda)v^*$.  In these combinations, any term with no $v\mE_2^\pm(\lambda)v^*$ has at least three powers of $\lambda$.  The Lipschitz bounds on $\mE_2^\pm(\lambda)$ and its derivative in Lemma~\ref{lem:R0 error Lip}, along with the fact that Lipschitz bounds apply to functions of the form $f(\lambda)=\lambda^k$ for any $k\in \mathbb N$.  All terms in $M_{1,\ell}$ are dominated by the contribution of $v\mE_2^\pm(\lambda)v^*$.
    \end{proof}

\begin{lemma}\label{lem: B inverse}
	
	Assume that zero is not regular, and that $|V(x)|\les \la x\ra^{-\delta}$.  For fixed $0\leq \ell \leq 1$, for sufficiently small $0<|\lambda|\ll1$ if $\delta>3+2\ell$ we have
	\begin{align*}
		(B^\pm(\lambda) )^{-1}
		& =- \frac{D_2}{\lambda}+D_2\Gamma_0^\pm D_2+B_{-1,\ell}^\pm(\lambda),
	\end{align*}
where $\Gamma_0^\pm$ are $\lambda$ independent absolutely bounded operators.
$$
\| \partial_\lambda^k B_{-1,\ell}^\pm(\lambda)\|_{HS}\les \lambda^{\ell-k}, \qquad k=0,1.
$$
Furthermore for fixed $0\leq \gamma \leq 1$ if $\delta>3+2(\gamma+\ell(1-\gamma))$, then for   $0<|\lambda_1|\leq |\lambda_2|\ll 1$, we have
\begin{align*}
	\|  B_{-1,\ell}^\pm(\lambda_2)-B_{-1,\ell}^\pm(\lambda_1)\|_{HS}&\les  |\lambda_2-\lambda_1|^\gamma |\lambda_1|^{ (1-\gamma)(\ell -1)}\\
	\|  \partial_\lambda B_{-1,\ell}^\pm(\lambda_2)-\p_\lambda B_{-1,\ell}^\pm(\lambda_1)\|_{HS}&\les  |\lambda_2-\lambda_1|^\gamma |\lambda_1|^{\ell(1-\gamma)-\gamma-1}.
\end{align*}
\end{lemma}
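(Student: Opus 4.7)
The plan is to substitute the expansion of $(M^\pm(\lambda)+S_1)^{-1}$ from Lemma~\ref{M+S1 inverse} into
$$B^\pm(\lambda) = S_1 - S_1(M^\pm(\lambda)+S_1)^{-1}S_1,$$
collapse the result using the Riesz-projection identities $D_1 S_1 = S_1 D_1 = S_1$, and then invert the resulting finite expansion by Neumann series on the range of $S_1$. The identity $D_1 S_1 = S_1$ holds because $D_1 = (T_0+S_1)^{-1}$ acts as the identity on $\operatorname{Range}(S_1) = \ker T_0$, and self-adjointness of $T_0+S_1$ gives the mirrored identity. Substituting and setting $T_1 := S_1 v\mG_1 v^* S_1$ leaves
$$B^\pm(\lambda) = -\lambda T_1 - \lambda^2 S_1 \Gamma_2^\pm S_1 - S_1 M_{1,\ell}^\pm(\lambda) S_1.$$

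Next, I would invoke the invertibility of $T_1$ on $S_1 L^2$, a structural fact tied to the zero-energy eigenspace that is established in Section~\ref{sec:thresholds}, and set $D_2 := T_1^{-1}$ (which is then absolutely bounded on $S_1 L^2$). Factoring out $-\lambda T_1$ exhibits a bracket $\mathbbm 1 + O(|\lambda|)$ in Hilbert-Schmidt norm, since $\|M_{1,\ell}^\pm(\lambda)\|_{HS} \les |\lambda|^{2+\ell}$ implies that $\lambda^{-1} D_2 S_1 M_{1,\ell}^\pm(\lambda) S_1$ is $O(|\lambda|^{1+\ell})$. A convergent Neumann series then yields
$$(B^\pm(\lambda))^{-1} = -\frac{D_2}{\lambda} + D_2 \Gamma_0^\pm D_2 + B_{-1,\ell}^\pm(\lambda),$$
with $\Gamma_0^\pm := S_1 \Gamma_2^\pm S_1$ absolutely bounded and $\lambda$-independent. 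The remainder $B_{-1,\ell}^\pm(\lambda)$ is dominated by $\lambda^{-2} D_2 S_1 M_{1,\ell}^\pm(\lambda) S_1 D_2$ together with the quadratic-and-higher Neumann corrections, each of which is $O(|\lambda|^\ell)$ by Lemma~\ref{M+S1 inverse}. The size bounds $\|B_{-1,\ell}^\pm(\lambda)\|_{HS} \les |\lambda|^\ell$ and $\|\partial_\lambda B_{-1,\ell}^\pm(\lambda)\|_{HS} \les |\lambda|^{\ell-1}$ follow at once by combining the Hilbert-Schmidt bounds on $M_{1,\ell}^\pm$ and $\partial_\lambda M_{1,\ell}^\pm$ with $\partial_\lambda(\lambda^{-2}) = -2\lambda^{-3}$.

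For the Lipschitz estimates I would apply Lemma~\ref{lem:lipschitz lem} to each product of a singular scalar factor $\lambda^{-k}$ with the operator-valued function $M_{1,\ell}^\pm(\lambda)$ (or its derivative), using the elementary interpolation $|\lambda_2^{-k}-\lambda_1^{-k}| \les |\lambda_2-\lambda_1|^\gamma |\lambda_1|^{-k-\gamma}$ for the scalar factor and the Lipschitz bounds in Lemma~\ref{M+S1 inverse} for the operator factor. The main obstacle will be the exponent bookkeeping: the two summands $C_g M_f$ and $C_f M_g$ produced by Lemma~\ref{lem:lipschitz lem} must each be shown to be dominated by $|\lambda_1|^{(1-\gamma)(\ell-1)}$ (respectively $|\lambda_1|^{\ell(1-\gamma)-\gamma-1}$ for the derivative), which requires careful arithmetic with the exponents $2+\ell$, $1+\gamma+\ell$, and $(1+\ell)(1-\gamma)$ appearing in Lemma~\ref{M+S1 inverse}, together with the freedom $|\lambda_1| \leq |\lambda_2|$ to trade $|\lambda_2|$-weights for $|\lambda_1|$-weights on the nonpositive exponents. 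The higher-order Neumann terms contribute only strictly better powers of $\lambda$ and are harmless in this bookkeeping.
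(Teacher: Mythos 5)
The algebraic half of your argument -- collapsing $B^\pm(\lambda)=S_1-S_1(M^\pm(\lambda)+S_1)^{-1}S_1$ via $S_1D_1=D_1S_1=S_1$, invoking Lemma~\ref{lem:invertibility} for $D_2=(S_1v\mG_1v^*S_1)^{-1}$, and running the Neumann series to get the leading term $-D_2/\lambda$, the $\lambda$-independent operator $D_2\Gamma_0^\pm D_2$, and the size bounds $\|\partial_\lambda^kB^\pm_{-1,\ell}\|_{HS}\les|\lambda|^{\ell-k}$ -- is exactly the paper's route and is fine.

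The Lipschitz estimates are where your plan has a genuine gap. You propose to keep the scalar singularity $\lambda^{-k}$ and the operator $M_{1,\ell}^\pm(\lambda)$ as separate factors and combine $|\lambda_2^{-k}-\lambda_1^{-k}|\les|\lambda_2-\lambda_1|^\gamma|\lambda_1|^{-k-\gamma}$ with the Lipschitz bound of Lemma~\ref{M+S1 inverse} through Lemma~\ref{lem:lipschitz lem}. This does not close. In the decomposition of Lemma~\ref{lem:lipschitz lem} the term $\lambda_1^{-2}\big[M_{1,\ell}^\pm(\lambda_2)-M_{1,\ell}^\pm(\lambda_1)\big]$ is only controlled by $|\lambda_1|^{-2}|\lambda_2-\lambda_1|^\gamma|\lambda_2|^{1+\gamma+\ell}$, and when $|\lambda_1|\ll|\lambda_2|$ (say $|\lambda_2|=|\lambda_1|^{1/2}$) this exceeds the target $|\lambda_2-\lambda_1|^\gamma|\lambda_1|^{(1-\gamma)(\ell-1)}$ by a positive power of $|\lambda_1|^{-1}$: the positive powers of $|\lambda_2|$ cannot be traded for powers of $|\lambda_1|$, which is precisely the trade your bookkeeping relies on. Reversing the pairing ($f(\lambda_2)\Delta g+\Delta f\,g(\lambda_1)$) rescues the zeroth-order bound for $\gamma\le 1/2$ but still fails for the derivative bound (the term $|\lambda_2|^{-3}|M(\lambda_2)-M(\lambda_1)|$ produces $|\lambda_1|^{\gamma+\ell-2}$, which is worse than $|\lambda_1|^{\ell(1-\gamma)-\gamma-1}$ for small $\gamma$).

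The paper's proof avoids this by treating the singular product as a single object $A(\lambda)=\lambda^{-2}\mE_2^\pm(\lambda)$ and splitting into two regimes: if $|\lambda_2-\lambda_1|\approx|\lambda_2|$ (which covers $|\lambda_1|\ll|\lambda_2|$ and the opposite-sign case), one uses the crude triangle-inequality bound $|A(\lambda_2)|+|A(\lambda_1)|\les|\lambda_2|^{\ell}r^{\ell}$ and converts $|\lambda_2|^{\gamma}\approx|\lambda_2-\lambda_1|^{\gamma}$; if $|\lambda_2-\lambda_1|\ll|\lambda_2|$, then $\lambda_1,\lambda_2$ are comparable and of the same sign, so the mean value theorem applies to $A$ on an interval \emph{not containing zero}, giving $|\lambda_2-\lambda_1|\,|\lambda_1|^{\ell-1}r^{\ell}$, which is then interpolated with the triangle-inequality bound. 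This case analysis -- in particular the observation that the singular behavior near $\lambda=0$ only enters through the MVT regime where zero is excluded from the interval -- is the idea missing from your write-up, and without it the stated exponents $(1-\gamma)(\ell-1)$ and $\ell(1-\gamma)-\gamma-1$ are not reachable.
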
	
\begin{proof}
Recalling that $S_1D_1 = D_1S_1 = S_1$, to use the inversion technique of Lemma~\ref{JNlemma}, we first need to invert the operators
\begin{align*}
    B^\pm(\lambda)
    & = S_1-S_1(M^\pm(\lambda)+S_1)^{-1}S_1\\
    & = S_1-S_1 \bigg[D_1 + \lambda D_1v\mG_1v^*D_1+\lambda^2 D_1 \Gamma_2^\pm D_1 + D_1M_{1,\ell}^\pm (\lambda)D_1\bigg]S_1\\
    & = -\lambda S_1 v \mathcal{G}_1 v^* S_1 - \lambda^2 S_1 \Gamma_2^\pm S_1 + B^\pm_{1,\ell}(\lambda)
\end{align*}
The leading $S_1$ is canceled out by the leading contribution of the second term.
Here $B_{1,\ell}^\pm(\lambda)$ obeys the same bounds as $M_{1,\ell}^\pm(\lambda)$ since $S_1$ is a $\lambda$ independent $L^2$-bounded projection.  Defining $B_{0,\ell}^\pm(\lambda)=-\lambda^{-1}B_{1,\ell}^\pm(\lambda)$, we see that
$$
\| \partial_\lambda^k B_{0,\ell}^\pm(\lambda)\|_{HS}\les \lambda^{1+\ell-k}, \qquad k=0,1.
$$
By Lemma~\ref{lem:invertibility} below, the operator $S_1v\mG_1v^*S_1$ is invertible on $S_1L^2$.  We denote $D_2:=(S_1v\mG_1v^*S_1)^{-1}$.  Then, by a Neumann series expansion we have
\begin{align*}
    (B^\pm(\lambda))^{-1}
     & = -\frac{1}{\lambda}\left[S_1 v \mathcal{G}_1 v^* S_1 + \lambda S_1 \Gamma_2^\pm S_1 + B^\pm_{0,\ell}(\lambda)\right]^{-1} \\
     & = -\frac{1}{\lambda}\left[\mathbbm 1+\lambda D_2 S_1 \Gamma_2^\pm S_1 + D_2B^\pm_{0,\ell}(\lambda)\right]^{-1} D_2 
     =- \frac{D_2}{\lambda}+D_2\Gamma_0^\pm D_2+B_{-1,\ell}^\pm(\lambda),
\end{align*}
where we collect all the error terms from the Neumann series into $B_{-1,\ell}^\pm$, which by the error bounds in Lemma~\ref{M+S1 inverse} yields
$$
	\| \partial_\lambda^k B_{-1,\ell}^\pm(\lambda)\|_{HS}\les \lambda^{\ell-k}, \qquad k=0,1.
$$
For the Lipschitz bounds, we need to consider cases since the power on $\lambda$ may be negative.  The error term here is composed of products of $\lambda^{-2} M_{1,\ell}^\pm(\lambda)$ and operators of the form $\lambda \Gamma_0^\pm$ with $\Gamma_0^\pm$ independent of $\lambda$.  As such, the limiting factor is the Lipschitz behavior of $\lambda^{-2}v\mE^\pm_2(\lambda) v^*$, since the remaining terms are dominated by this one.  We write $A(\lambda)= \lambda^{-2}\mE_2^\pm(\lambda)(x,y)$ and $r=|x-y|$ to illustrate where the more stringent decay conditions on $V$ arise.

We consider cases.  First, if $|\lambda_2-\lambda_1|\approx |\lambda_2|$, then either  $|\lambda_1|\ll |\lambda_2|$ or $\lambda_1$ and $\lambda_2$ have opposite signs with $|\lambda_1|\approx |\lambda_2|$.  In either case, by Lemma~\ref{lem:R0 expansions} we have
$$
	|A(\lambda_2)-A (\lambda_1)|\les |\lambda_2|^{\ell} r^{\ell}\approx |\lambda_2-\lambda_1|^\gamma |\lambda_j|^{\ell-\gamma}r^\ell,
$$
where $\lambda_j=\lambda_2$ if the exponent is positive and $\lambda_1$ if the exponent is negative.  On the other hand, if $|\lambda_2-\lambda_1|\ll |\lambda_2|$ then we must have $|\lambda_1|\approx |\lambda_2|$ where $\lambda_1$ and $\lambda_2$ have the same sign.  We may then use the mean value theorem to write
$$
	|A(\lambda_2)-A(\lambda_1)|=\bigg| \int_{\lambda_1}^{\lambda_2} \partial_\lambda A(s)\, ds \bigg|\les |\lambda_2-\lambda_1||\lambda_1|^{\ell-1} r^\ell;
$$
we note that zero is not in the interval over which we integrate.  Interpolating that with the bound from the triangle inequality of $|\lambda_2|^\ell r^\ell$ yields the bound that is dominated by $|\lambda_2-\lambda_1|^\gamma |\lambda_1|^{(1-\gamma)(\ell-1)} r^\ell$.  Here we note that the singular behavior of the derivative can't be improved in the interpolation process since the upper bound from the triangle inequality does not involve powers of $\lambda_1$. 

A similar case analysis with the derivative shows the second Lipschitz bound:
$$
	|\partial_\lambda A(\lambda_2)-\partial_\lambda A(\lambda_1)|\les |\lambda_2-\lambda_1|^\gamma |\lambda_1|^{(\ell-1)(1-\gamma)-2\gamma}\la r\ra^{\gamma+\ell(1-\gamma)},
$$
where the power on $\lambda_1$ simplifies to $\ell(1-\gamma)-\gamma-1$.  The assumptions on $\delta$ are need to ensure that the kernel $v(x)\la x-y\ra^{\gamma + \ell(1-\gamma)}v^*(y)$ is Hilbert-Schmidt.  The contribution of the remaining terms in $B_{-1,\ell}^\pm(\lambda)$ are dominated by these bounds.
\end{proof}
The preceding lemmas serve to prove the following expansion of $(M^{\pm})^{-1}(\lambda)$ in the presence of a zero energy eigenvalue.
\begin{prop}\label{prop:Minv eval}
	Assume that zero is not regular, and that $|V(x)|\les \la x\ra^{-\delta}$.  For fixed $0\leq \ell \leq 1$, for sufficiently small $0<|\lambda|$ if $\delta>3+2\ell$ we have
    \begin{align*}
        (M^\pm)^{-1}(\lambda) = -\frac{D_2}{\lambda} + \widetilde{\Gamma}_{0}^\pm+M_{-1,\ell}^\pm(\lambda),
    \end{align*}
	where 
	$$
	\| \partial_\lambda^k M_{-1,\ell}^\pm(\lambda)\|_{HS}\les \lambda^{\ell-k}, \qquad k=0,1.
	$$
	Furthermore for fixed $0\leq \gamma \leq 1$ if $\delta>3+2(\gamma+\ell(1-\gamma))$, then for   $0<|\lambda_1|\leq |\lambda_2|\ll 1$, we have
	\begin{align*}
		\|  M_{-1,\ell}^\pm(\lambda_2)-M_{-1,\ell}^\pm(\lambda_1)\|_{HS} & \les |\lambda_2-\lambda_1|^\gamma |\lambda_1|^{ (1-\gamma)(\ell -1)},\\
		\|  \partial_\lambda M_{-1,\ell}^\pm(\lambda_2)-\p_\lambda M_{-1,\ell}^\pm(\lambda_1)\|_{HS} & \les |\lambda_2-\lambda_1|^\gamma |\lambda_1|^{\ell(1-\gamma)-\gamma-1}.
	\end{align*}
\end{prop}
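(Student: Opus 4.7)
The proof follows by combining Lemma \ref{JNlemma} with the expansions from Lemmas \ref{M+S1 inverse} and \ref{lem: B inverse}. I would start from the identity
$$
(M^\pm)^{-1}(\lambda) = (M^\pm(\lambda) + S_1)^{-1} + (M^\pm(\lambda)+S_1)^{-1} S_1 (B^\pm)^{-1}(\lambda) S_1 (M^\pm(\lambda)+S_1)^{-1},
$$
substitute the two expansions, and multiply everything out, using the algebraic identities $D_1 S_1 = S_1 D_1 = S_1$ and $D_2 S_1 = S_1 D_2 = D_2$ (from $D_1 = (T_0+S_1)^{-1}$, $S_1$ being the Riesz projection onto the kernel of $T_0$, and $D_2$ being an inverse on $S_1 L^2$) to collapse many of the resulting products.

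The leading $-D_2/\lambda$ singular term arises from pairing the leading $S_1$ factors with the leading $-D_2/\lambda$ in $(B^\pm)^{-1}$, since $S_1 \cdot (-D_2/\lambda) \cdot S_1 = -D_2/\lambda$. The $\lambda$-independent constant $\widetilde{\Gamma}_0^\pm$ collects $D_1$ from the first summand of the Jensen--Nenciu formula, the two cross-terms $-D_1 v\mG_1 v^* D_2$ and $-D_2 v\mG_1 v^* D_1$ obtained when one of the $O(\lambda)$ corrections of $(M^\pm+S_1)^{-1}$ is paired with $-D_2/\lambda$, and the middle contribution $D_2 \Gamma_0^\pm D_2$. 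Each of these is a $\lambda$-independent, absolutely bounded operator on $L^2$ (recall that $D_2$ is finite-rank since $S_1$ is). Everything else forms $M_{-1,\ell}^\pm(\lambda)$; the size bounds $\|\p_\lambda^k M_{-1,\ell}^\pm\|_{HS} \les |\lambda|^{\ell-k}$ follow by matching powers of $\lambda$ in each remaining product, since each such summand carries a factor of $M_{1,\ell}^\pm$ or $B_{-1,\ell}^\pm$, possibly with additional positive powers of $\lambda$, which is stronger than $|\lambda|^\ell$ for $|\lambda| \leq 1$.

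The main obstacle is the Lipschitz bounds. For each summand I would apply Lemma \ref{lem:lipschitz lem} (and its derivative analogue) to replace a single factor by its Lipschitz difference while controlling the others in the $L^2 \to L^2$ operator norm uniformly on $0 < |\lambda_1| \leq |\lambda_2| \ll 1$. The most delicate cases involve the singular factor $-D_2/\lambda$: interpolating between the pointwise size $|\lambda_j|^{-1}$ and the mean-value estimate $|\lambda_2-\lambda_1|/|\lambda_1|^2$ gives Lipschitz constant $|\lambda_2-\lambda_1|^\gamma |\lambda_1|^{-1-\gamma}$, while $\p_\lambda(D_2/\lambda)$ obeys $|\lambda_2-\lambda_1|^\gamma |\lambda_1|^{-2-\gamma}$. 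These are paired with the $O(\lambda)$ or $O(\lambda^2)$ correction factors of $(M^\pm+S_1)^{-1}$, whose extra powers of $\lambda$ absorb the excess singularity. Combined with the Lipschitz bounds on $B_{-1,\ell}^\pm$ from Lemma \ref{lem: B inverse}, which already saturate the rate $|\lambda_2-\lambda_1|^\gamma |\lambda_1|^{(1-\gamma)(\ell-1)}$ when sandwiched between the bounded operators $S_1, D_1, D_2$, every summand in $M_{-1,\ell}^\pm$ is seen to obey the stated Lipschitz bounds.
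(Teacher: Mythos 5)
Your proposal follows essentially the same route as the paper: apply the Jensen--Nenciu identity, substitute the expansions of Lemmas~\ref{M+S1 inverse} and \ref{lem: B inverse}, collapse products via $D_1S_1=S_1D_1=S_1$ and $S_1D_2S_1=D_2$ to isolate $-D_2/\lambda$ and $\widetilde\Gamma_0^\pm$, and control the remainder and its Lipschitz continuity through Lemma~\ref{lem:lipschitz lem} together with the bounds on $M_{1,\ell}^\pm$ and $B_{-1,\ell}^\pm$. One small imprecision: not every remaining summand carries a factor of $M_{1,\ell}^\pm$ or $B_{-1,\ell}^\pm$ (there are pure $O(\lambda)$ and $O(\lambda^2)$ polynomial terms, e.g.\ from $\lambda^2 D_1\Gamma_2^\pm D_1$ and from pairing the $\lambda v\mG_1 v^*$ corrections with $-D_2/\lambda$), but these trivially satisfy the claimed bounds since $\ell\leq 1$, so the argument is unaffected.
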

 We note that the difference between the `$+$' and `$-$' terms is not used in the arguments in Section~\ref{sec:zero reg} when zero is regular.  Hence the $\pm$ dependence of the order $\lambda^0$ term will not affect our ability to use these bounds in this case.
\begin{proof}
Using Lemmas~\ref{JNlemma}, \ref{M+S1 inverse}, and \ref{lem: B inverse}, we have
    \begin{multline*}
        (M^\pm)^{-1}(\lambda)
         = D_1 + \lambda D_1v\mG_1v^*D_1+\lambda^2 D_1 \Gamma_2^\pm D_1 + D_1M_{1,\ell}^\pm (\lambda)D_1 \\
         + \left(D_1 + \lambda D_1v\mG_1v^*D_1+\lambda^2 D_1 \Gamma_2^\pm D_1 + D_1M_{1,\ell}^\pm (\lambda)D_1\right) 
        S_1\left(- \frac{D_2}{\lambda}+D_2\Gamma_0^\pm D_2+B_{-1,\ell}^\pm(\lambda)\right) S_1 \\
         \left(D_1 + \lambda D_1v\mG_1v^*D_1+\lambda^2 D_1 \Gamma_2^\pm D_1 + D_1M_{1,\ell}^\pm (\lambda)D_1\right)
    \end{multline*}
    Expanding this, the most singular contribution with respect to the spectral parameter is
    \begin{equation*}
        -D_1\frac{S_1D_2S_1}{\lambda}D_1=-\frac{S_1D_2S_1}{\lambda}=-\frac{D_2}{\lambda}.
    \end{equation*}
    The next largest contribution with respect to the spectral parameter is the $\lambda^0$ term,
    \begin{equation*}
        \widetilde{\Gamma}_0^{\pm} \coloneqq
        D_1-D_1S_1D_2S_1D_1v\mathcal{G}_1 v^* D_1-D_1v\mathcal{G}_1 v^*D_1S_1D_2S_1D_1+D_1S_1D_2\Gamma_0^{\pm}D_2S_1D_1.
    \end{equation*}
    The remaining terms form the error $M^{\pm}_{-1,\ell}$. The error and its first derivative are dominated by the contribution of  $D_1S_1B_{-1, \ell}^{\pm}(\lambda)S_1D_1$. The first claim on the error term follows from Lemma~\ref{lem: B inverse}.    
    By an application of Lemma~\ref{lem:lipschitz lem}, the Lipschitz bounds on $B_{-1, \ell}^{\pm}$ in Lemma~\ref{lem: B inverse} and the absolute boundedness of the various operators suffice to show the Lipschitz bounds for $M_{-1, \ell}^{\pm}$.
\end{proof}
It is convenient to define the function 
$$
	\mu_1(\lambda,x,y)=\lambda^{-1}\mu_0(\lambda,x,y)=\lambda^{-1}\chi(\lambda)[\mR_0^+-\mR_0^-](\lambda)(x,y).
$$
\begin{lemma}\label{lem:mu1 bounds}
	The following bounds hold:
	\begin{align*}
		\left|{\mu_1(\lambda, x, y)}\right| \lesssim \min\!\left(|\lambda|, \frac{1}{|x - y|}\right), 
		\qquad 
		\left|\partial_\lambda\mu_1(\lambda, x, y)\right| \lesssim 1,
		\qquad
		\left|\partial_\lambda^2 \mu_1(\lambda, x, y)\right| \lesssim \frac{1}{|\lambda|} + |x - y|.
	\end{align*}
    Furthermore, for any $\gamma \in [0, 1]$ and $|\lambda_1| \le |\lambda_2|$, the following Lipschitz bounds hold:
    \begin{align*}
        \abs{\mu_1(\lambda_2, x, y) - \mu_1(\lambda_1, x, y)} &\les  |\lambda_2 - \lambda_1|^{\gamma} \min\left(|\lambda_2|, \frac{1}{|x - y|}\right)^{1 - \gamma} \\
        \abs{\partial_\lambda\mu_1(\lambda_2, x, y) - \partial_\lambda\mu_1(\lambda_1, x, y)} &\les |\lambda_2 - \lambda_1|^\gamma\left(\frac{1}{|\lambda_1|} + |x - y|\right)^{\gamma}.
    \end{align*}
\end{lemma}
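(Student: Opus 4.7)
The plan is to derive all bounds by combining the explicit formula for $\mu = \mR_0^+-\mR_0^-$ computed in the proof of Theorem~\ref{thm:free} with the bounds in \eqref{eqn:mu0 bounds}. Writing $r=|x-y|$ and $\hat e=(x-y)/r$, since $\mu_0=\chi\,\mu$, we have
\begin{equation*}
    \mu_1(\lambda,x,y) = \lambda^{-1}\mu_0(\lambda,x,y) = \chi(\lambda)\bigg[\alpha\cdot\hat e\,\lambda\,\frac{\lambda r\cos(\lambda r)-\sin(\lambda r)}{2\pi(\lambda r)^2}+\frac{i\sin(\lambda r)}{2\pi r}\bigg].
\end{equation*}
I would prove the pointwise bound $|\mu_1|\les\min(|\lambda|,1/r)$ by casing on whether $|\lambda|r<1$ or $|\lambda|r\geq 1$: when $|\lambda|r<1$, the Taylor expansion $\lambda r\cos(\lambda r)-\sin(\lambda r)=-\tfrac13(\lambda r)^3+O((\lambda r)^5)$ makes the first bracket $O(|\lambda|^2 r)\les |\lambda|$; when $|\lambda|r\geq 1$, both brackets are directly $O(|\lambda|)$ and also $O(1/r)$, using $|\sin(\lambda r)|\les\min(|\lambda|r,1)$.

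For the two derivative estimates, I would apply the product rule to $\mu_1=\lambda^{-1}\mu_0$ (derivatives of $\chi$ are harmless since $\chi'$ is supported where $|\lambda|\approx 1$) and insert \eqref{eqn:mu0 bounds}:
\begin{align*}
    |\partial_\lambda\mu_1|&\les |\lambda|^{-1}|\partial_\lambda\mu_0|+|\lambda|^{-2}|\mu_0|\les 1,\\
    |\partial_\lambda^2\mu_1|&\les |\lambda|^{-1}|\partial_\lambda^2\mu_0|+|\lambda|^{-2}|\partial_\lambda\mu_0|+|\lambda|^{-3}|\mu_0|\les \frac{1}{|\lambda|}+r,
\end{align*}
where in the last line I use $|\mu_0|\les|\lambda|^2$ to obtain $|\lambda|^{-3}|\mu_0|\les|\lambda|^{-1}$.

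For the Lipschitz bounds, the plan is to follow the interpolation strategy used in the derivations of \eqref{eqf:mu Lipschitz} and \eqref{eqf:mu Lipschitz2}: combine the triangle inequality $|\mu_1(\lambda_2)-\mu_1(\lambda_1)|\les\min(|\lambda_2|,1/r)$ with the mean-value bound $|\lambda_2-\lambda_1|\sup_I|\partial_\lambda\mu_1|\les|\lambda_2-\lambda_1|$ and interpolate with weight $\gamma$ to get the first estimate. For the derivative Lipschitz bound, interpolate between the triangle-inequality bound $|\partial_\lambda\mu_1(\lambda_2)-\partial_\lambda\mu_1(\lambda_1)|\les 1$ and the mean-value bound $|\lambda_2-\lambda_1|(1/|\lambda_1|+r)$, which follows from $|\partial_\lambda^2\mu_1|\les 1/|\lambda|+r$ since its supremum over $I=[\min(\lambda_1,\lambda_2),\max(\lambda_1,\lambda_2)]$ is attained at $|\lambda_1|$, the endpoint with smaller modulus.

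The main technical nuisance will be the MVT step for $\partial_\lambda\mu_1$ when $\lambda_1,\lambda_2$ have opposite signs and $I$ therefore contains $0$: the bound $1/|\lambda|$ is not integrable on such $I$. I plan to resolve this exactly as in the proof of Lemma~\ref{lem: B inverse}, by splitting into the cases $|\lambda_2-\lambda_1|\gtrsim|\lambda_2|$ (where the triangle inequality alone dominates the desired bound) and $|\lambda_2-\lambda_1|\ll|\lambda_2|$ (which forces $\lambda_1,\lambda_2$ to share a sign, so the MVT applies on an interval bounded away from zero).
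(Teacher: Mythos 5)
Your proposal is correct and follows essentially the same route as the paper, which proves the lemma by dividing the bounds \eqref{eqn:mu0 bounds} by $\lambda$ (i.e.\ the product rule applied to $\lambda^{-1}\mu_0$), interpolating between the triangle-inequality and mean-value bounds for the first Lipschitz estimate, and invoking the sign/size case analysis from the end of the proof of Lemma~\ref{lem: B inverse} for the derivative Lipschitz estimate, exactly as you outline. Your write-up simply fills in the details the paper leaves implicit.
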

The first claim follows by dividing \eqref{eqn:mu0 bounds} by $\lambda$ and the first Lipschitz bounds follows by interpolation.  For the Lipschitz bound on the derivative, a case analysis as in the end of the proof of Lemma~\ref{lem: B inverse} is required.

To establish the first bound we may select $\ell=0+$ in the expansion of $(M^\pm)^{-1}$ in Proposition~\ref{prop:Minv eval}.    The Lipschitz bounds in Proposition~\ref{prop:Minv eval} restrict our choices for  $\gamma$ and $\ell$. We must select $\ell$ so that $\ell(1-\gamma)-\gamma>0$ to ensure integrability near zero.  To do so, we select $1\geq \ell=\frac{\gamma}{1-\gamma}+$,  which restricts $\gamma$ to $0 \leq \gamma < 1/2$. Under these conditions, $3+2(\gamma+\ell(1-\gamma))=3+4\gamma+$. 
To obtain an estimate that is integrable at infinity, 
we may select $\gamma=0+$ with $\ell=c\gamma$, for some small $c>0$.

\begin{proof}[Proof of Proposition~\ref{prop:zero not reg dispersive bounds}.]
	From \eqref{eqn:born series} and Proposition \ref{prop:Minv eval}, we have	
	\begin{align*}
		\mR_V^\pm(\lambda)= &\mR_0^\pm(\lambda)- \mR_0^\pm(\lambda) V \mR_0^\pm(\lambda)+ \mR_0^\pm (\lambda)V \mR_0^\pm(\lambda) V  \mR_0^\pm(\lambda)\\
		-&\mR_0^\pm(\lambda) V \mR_0^\pm(\lambda) v^*\left(-\frac{D_2}{\lambda} + \widetilde{\Gamma}_{0}^\pm+M_{-1,\ell}^\pm(\lambda)\right)  v \mR^{\pm}_0 (\lambda) V  \mR_0^\pm(\lambda).
	\end{align*}
	By Lemmas~\ref{lem:born series} and \ref{lem:low wtd Born}, we only need to consider the last term. Furthermore, by Proposition~\ref{prop:Minv eval} the operators $\widetilde \Gamma_0^\pm +M^\pm_{-1,\ell}(\lambda)$ satify the hypotheses of Lemmas~\ref{prop:perturbed bound} and \ref{lem:low wtd tail}, so we need only consider the contribution of $D_2$.

	Since $D_2$ is independent of the $\lambda$ and $\pm$, using \eqref{eqn:alg identity} we have
	\begin{multline*}
		\mR_0^+V\mR_0^+v^*	\frac{D_2}{\lambda} v\mR_0^+V\mR_0^+-\mR_0^-V\mR_0^-v^*		\frac{D_2}{\lambda} v\mR_0^-V\mR_0^- \\
		= \mu_1 V\mR_0^-v^*	D_2 v\mR_0^-V\mR_0^- + \mR_0^+V\mu_1v^*	D_2 v\mR_0^-V\mR_0^- \\
		+ \mR_0^+V\mR_0^+v^*D_2 v\mu_1V\mR_0^-  
		+ \mR_0^+V\mR_0^+v^*D_2 v\mR_0^+V\mu_1.
	\end{multline*}
	We choose to move the singular $\frac{1}{\lambda}$ to the difference $\mR_0^+-\mR_0^-$ to take advantage of the cancellation and smallness near zero and use the bounds in Lemma~\ref{lem:mu1 bounds} directly.

	We now consider the first bound in the claim.  
	We consider the contribution of the first term in the equation above; the argument for the other three is similar.  The proof follows the argument in Lemma~\ref{prop:perturbed bound} replacing one resolvent with $\mu_1(\lambda)$. The bounds on $\mu_1$ in Lemma~\ref{lem:mu1 bounds} allow us to  integrate by parts with no boundary terms
	\begin{multline*}
		-\int_{\R} e^{-it\lambda} \chi(\lambda)  [(\mR_0^+ - \mR_0^-)V\mR_0^-v^*	\frac{D_2}{\lambda} v\mR_0^-V\mR_0^-](\lambda)(x,y)\, d\lambda \\
 		=\frac{1}{it}\int_{\R}e^{-it\lambda}\p_\lambda[\mu_1V\mR_0^-v^*D_2 v\mR_0^-V\mR_0^-](\lambda)(x,y)\, d\lambda.
	\end{multline*}
	Then for $k_1, k_2, k_3, k_4 \in \{0, 1\}$ with $\sum k_j = 1$, the integrand is composed of sums of operators of the form
	\begin{align*}
		e^{-it\lambda}\partial_\lambda^{k_1}\mu_1V\partial_\lambda^{k_2}\mR_0^-v^*D_2v\partial_\lambda^{k_3}\mR_0^-V\partial_\lambda^{k_4}\mR_0^-.
	\end{align*}
	Now observe that $v\partial_\lambda^{k_3}\mR_0^-V\partial_\lambda^{k_4}\mR_0^-$ is $L^2$ by the boundedness of $D_2$ along with \eqref{eqn:4.3a} and \eqref{eqn:4.3b} in the proof of Lemma \ref{prop:perturbed bound}. Now observe that by \eqref{eq:resolv high derivs}, Lemmas \ref{lem:mu1 bounds}, \ref{lem:spatial estimates} and \ref{lem:GolVis}, on the support of $\chi(\lambda)$ we have
	\begin{align*}                                 
		\sup_{x\in\R^3}\left\|\big(\partial_\lambda^{k_1}\mu_1V\partial_\lambda^{k_2}\mR_0^-v^*\big)(\lambda)(x, \cdot)\right\|_{L^2}		\les 1.
	\end{align*}
	Then, by \eqref{eqn:4.3a} and the absolute boundedness of $D_2$, we have 
	\begin{align*}
		\sup_{x,y\in\R^3}\left|\int_{\R} e^{-it\lambda} \chi(\lambda) \lambda^{-1} [\mR_0^+V\mR_0^+v^*	D_2 v\mR_0^+V\mR_0^+-\mR_0^-V\mR_0^-v^*	D_2 v\mR_0^-V\mR_0^-](\lambda)(x,y)\, d\lambda\right| \les \frac{1}{|t|}.
	\end{align*}
	Here the spatial integrals are controlled as in the proof of Lemma \ref{prop:perturbed bound}.

	For the weighted bound,	we adapt the proof of Lemma~\ref{lem:low wtd tail} to account for the effect of $\mu_1$.  Since the smallness in $\lambda$ of the resolvents isn't used in the proof of Lemma~\ref{lem:low wtd tail}, the bound in \eqref{eqn:L2 iter Lips1} is valid if we replace one $\mR_0$ with $\mu_1$, the only new term that arises is the contribution of terms involving $\partial_\lambda \mu_1$.  We note that
    \begin{align*}
        &\big|[\partial_\lambda \mu_1(\lambda_2)-\partial_\lambda\mu_1(\lambda_1)]V\mR_0(\lambda_2)(x,z_2)\big|\\
		& \qquad \les |\lambda_2-\lambda_1|^{\gamma} \int_{\R^3} \bigg(\frac{1}{|\lambda_1|}+|x-z_1|\bigg)^{\gamma} \la z_1\ra^{-\delta}\bigg(\frac{1}{|z_1-z_2|^2}+\frac{|\lambda_2|}{|z_1-z_2|}\bigg) \, dz_1 \\
		& \qquad \les |\lambda_2-\lambda_1|^\gamma |\lambda_1|^{-\gamma}\la x\ra^{\gamma} \int_{\R^3}\la z_1\ra^{\gamma-\delta}\bigg(\frac{1}{|z_1-z_2|^2}+\frac{1}{|z_1-z_2|}\bigg) \, dz_1 \les |\lambda_2-\lambda_1|^\gamma |\lambda_1|^{-\gamma}\la x\ra^{\gamma}.
    \end{align*}
	Here we need $\delta>2+\gamma$ to apply Lemma~\ref{lem:spatial estimates}.  In particular, using Lemma~\ref{lem:GolVis} this shows that
	$$
		\|[\partial_\lambda \mu_1(\lambda_2)-\partial\mu_1(\lambda_1)]V\mR_0(\lambda_2)(x,\,\cdot\,)v^*(\,\cdot\,)\|_{L^{2}}\les |\lambda_2-\lambda_1|^\gamma |\lambda_1|^{-\gamma}\la x\ra^{\gamma}.
	$$
	From here, the proof follows exactly as in the regular case with the above bound used in place of  \eqref{eqn:L2 iter Lips2}.  The restrictions on $\gamma$ arise when using the Lipschitz bounds on $\partial_\lambda(M^{\pm})^{-1}$ since we need $\ell(1-\gamma)-\gamma-1>-1$ to ensure integrability near zero.
\end{proof}

\begin{rmk}
	
	The constraint on $\gamma$ in Theorem~\ref{thm:main} is an artifact of the proof.  It should be possible to prove similar results for $1/2 \leq \gamma\leq 1$ by using longer expansions.  That is, writing
	$$
		\mR_0^\pm(\lambda)
		= \mG_0 + \lambda \mG_1+i\lambda^2\mG_2^\pm+\lambda^3\mG_3^\pm +\mathcal E_3^\pm(\lambda,|x-y|)
	$$
	where $\mG_3^\pm=\pm \frac{|x-y|}{3}\alpha \cdot \hat{e} -\frac{|x-y|}{2}$ would allow for an error term of size $\lambda^3(\lambda|x-y|)^\ell$ for any $0\leq\ell \leq 1$.  Using this in expansions for $M^\pm$, $B^\pm$ and considering longer Neumann series expansions would allow for control of the error terms that avoids the bottleneck in the proof of Proposition~\ref{prop:zero not reg dispersive bounds}.  Since our proof allows for a time-integrable bound, we omit this approach for the sake of brevity.
	
\end{rmk}

\section{Threshold characterization}\label{sec:thresholds}

For completeness, we prove the claims in Definition~\ref{def:resonances}, which connect the existence of threshold eigenvalues with the $L^2$ kernel of the operator $T_0$.
The characterization of the threshold is similar to that of the massless two dimensional case, \cite{EGG2}, with roots in the massive case in \cite{egd,EGT} and Schr\"odinger equation \cite{JN,ES,eg2}. Though the lack of zero energy resonances simplifies many calculations.  Recall that $\mathcal H=D_0+V$.
\begin{lemma}\label{lem:S1 char}
	Assume that $|V(x)|\les \la x\ra^{-2-}$.
	If $\phi \in \Ker(T_0)$, then $\phi = U v \psi$ with $\psi$ a distributional solution to $\mathcal H\psi=0$ and $\psi\in L^2(\R^3)$, that is $\psi$ is an eigenfunction. Furthermore, $\psi \in L^p(\R^3)$ for all $p\geq 2$.
\end{lemma}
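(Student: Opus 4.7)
The plan is to define $\psi \coloneqq -\mG_0 v^*\phi$ and verify the three asserted properties in turn. Starting from the equation $T_0\phi = U\phi + v\mG_0 v^*\phi = 0$, multiplying by $U$ and using $U^2 = \1$ yields $\phi = -Uv\mG_0 v^*\phi = Uv\psi$, which both establishes the desired representation and, after another multiplication by $U$, produces the identity $v\psi = U\phi$.

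For the distributional equation $\mathcal H \psi = 0$, the crucial input is the kernel identity $D_0 \mG_0 = \delta\,\1$ on $\R^3\times\R^3$. This is immediate from the factorization $\mG_0 = D_0 \mG_1$ visible in \eqref{eqn:Gj def}, together with the classical fact $-\Delta \mG_1 = \delta\,\1$ and the identity $D_0^2 = -\Delta\,\1$ coming from the anticommutation relations \eqref{eqn:anticomm}. Applying this distributionally gives $D_0\psi = -v^*\phi$, while the relation $v\psi = U\phi$ yields $V\psi = v^*Uv\psi = v^*U\cdot U\phi = v^*\phi$; hence $\mathcal H \psi = D_0\psi + V\psi = 0$.

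For the $L^2$ bound on $\psi$, I would combine H\"older with Hardy--Littlewood--Sobolev (HLS). Since $|v^*(y)| \les \la y\ra^{-\delta/2}$ with $\delta > 2$, the matrix-valued function $v^*$ lies in $L^3(\R^3)$, so $v^*\phi \in L^{6/5}(\R^3)$ by H\"older. The pointwise bound $|\mG_0(x,y)| \les |x-y|^{-2}$ then identifies $\mG_0$ with an operator dominated by the Riesz potential $I_1$ on $\R^3$, and the HLS inequality (convolution with $|x|^{-2}$ maps $L^{6/5}(\R^3) \to L^2(\R^3)$) gives $\psi \in L^2$.

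To upgrade to $L^p$ for all $p > 2$, the plan is to bootstrap on the identity $\psi = -\mG_0 V\psi$, which follows by substituting $v\psi = U\phi$ back into the definition of $\psi$. Since $V$ is bounded, $\psi\in L^2$ implies $V\psi \in L^2$, and a further HLS application yields $\psi \in L^6$; interpolation with $L^2$ then covers $p \in [2,6]$. Choosing $p \in (2,3)$ close to the endpoint and re-applying HLS gives $\psi \in L^q$ for $q = (1/p - 1/3)^{-1}$, which ranges over $(6, \infty)$ as $p \to 3^-$; hence $\psi \in L^q$ for every finite $q \geq 2$. Finally, to reach $q = \infty$, I would split the pointwise bound $|\psi(x)| \les \int |x-y|^{-2}|V(y)\psi(y)|\,dy$ at $|x-y| = 1$: H\"older with $V\psi \in L^p$ for some $p > 3$ controls the local part (since $|x-y|^{-2}$ is locally in $L^{p'}$ for $p' < 3/2$), and the tail is dominated by $\|V\psi\|_{L^1} \leq \|V\|_{L^2}\|\psi\|_{L^2}$, which is finite because $\delta > 2 > 3/2$ gives $V \in L^2$. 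The main technical point is sequencing the HLS bootstrap so as to stay within its admissible range $1 < p < 3$; verifying $D_0\mG_0 = \delta\,\1$ is the other key ingredient but is direct from the factorization in \eqref{eqn:Gj def}.
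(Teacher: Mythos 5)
Your proposal is correct, and its skeleton — defining $\psi=-\mG_0v^*\phi$, deriving $\phi=Uv\psi$ and $V\psi=v^*\phi$ from $U^2=\1$, and obtaining $\mathcal H\psi=0$ distributionally from $D_0\mG_0=D_0^2\mG_1=-\Delta\mG_1=\delta\,\1$ — is exactly the paper's. Where you diverge is in the integrability arguments. For $\psi\in L^2$ the paper dominates $\mG_0$ by the fractional integral $\mathcal I_1$ and invokes Jensen's weighted mapping property $\mathcal I_1\colon L^{2,\sigma}\to L^2$ for $\sigma>1$ (using $v^*\phi\in L^{2,1+}$), whereas you use H\"older ($v^*\in L^3$ since $\delta/2>1$, so $v^*\phi\in L^{6/5}$) followed by the Hardy--Littlewood--Sobolev bound $L^{6/5}\to L^2$; both are legitimate and need the same decay $\delta>2$. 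For the higher $L^p$ claims, the paper jumps directly from $\psi\in L^6$ to $\psi\in L^\infty$ with a single H\"older estimate, bounding $\|V(y)|x-y|^{-2}\|_{L^{6/5}_y}$ uniformly in $x$ via Lemma~\ref{lem:GolVis}; you instead run an HLS bootstrap through all finite $q$ and then split the kernel at $|x-y|=1$ to reach $L^\infty$. Your route is longer but entirely self-contained (it avoids the external weighted-space and GolVis lemmas at the cost of checking the HLS admissibility window $1<p<3$ at each stage), and the exponent bookkeeping you give — including $V\in L^2$ for the tail since $\delta>3/2$ — is accurate. One minor point worth adding in a final write-up: to justify the distributional manipulations one should note, as the paper does in the companion Lemma~\ref{lem:efn char}, that $v^*\phi\in L^1$ (or at least is a tempered distribution), so that $\mG_0v^*\phi$ and $D_0\mG_0v^*\phi$ are well defined.
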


\begin{proof}
	Take $\phi \in \Ker(T_0)$ for $\phi \in L^2$, so 
    \begin{align*}
        0 
        = T_0\phi = U\phi + v\mG_{0}v^* \phi 
        = 0 \quad \implies \quad \phi = - Uv\mG_{0}v^* \phi.
    \end{align*}
	Define $\psi \coloneqq - \mG_{0}v^*\phi$, then $\phi = Uv\psi$. Now,   $\mathcal H = D_0+V = -i\alpha \cdot \nabla+V$,
    \begin{align*}
        \mathcal H\psi
        = (-i\alpha \cdot \nabla+V)\psi 
        = -i\alpha \cdot \nabla \psi + v^*Uv\psi
		= -i\alpha \cdot \nabla(\mG_{0}v^*\phi) + v^*\phi .
    \end{align*}
	Here, recalling that $\mG_0=-i\alpha \cdot \nabla G_0$ where $G_0(x,y) = (4\pi |x-y|)^{-1} = (-\Delta)^{-1}(x,y)$, we have
    \begin{align*}
        -i\alpha \cdot \nabla(\mG_{0}v^*\phi) 
        = -i\alpha \cdot \nabla(-i\alpha \cdot \nabla G_0 v^*\phi) = \Delta (-\Delta)^{-1}v^*\phi 
        = - v^*\phi 
    \end{align*}
	distributionally. So,
    \begin{align*}
        \mathcal H\psi 
        = -i\alpha \cdot \nabla(\mG_{0}v^*\phi) + v^*\phi 
        = - v^*\phi + v^*\phi 
        = 0.
    \end{align*}
	That is, if $\phi \in \Ker(T_0)$ we have $\mathcal H\psi = 0$ in the sense of distributions. Now, to show that $\psi \in L^2$, we note that $\psi = -\mG_{0}v^*\phi$ with $\phi \in L^2$. We can dominate the kernel of $\mG_0$ as follows: $|\mG_{0}| \les \mathcal I_1$ where $\mathcal I_1$ is the fractional integral operator with integral kernel $\mathcal{I}_1(x,y) = c|x-y|^{-2}$. By Lemma 2.3 in \cite{Jen} $\mathcal I_1 \colon L^{2, \sigma} \to L^2$ provided $\sigma > 1$.  If we assume $|V(x)| \les \la x \ra^{-\delta}$ for some $\delta > 2$, then $v^*\phi \in L^{2,1+}$, and we conclude that $\psi \in L^2(\R^3)$.
	
	Further, by the Hardy-Littlewood-Sobolev inequality, $\mathcal{I}_1 \colon L^2(\R^3) \to L^6(\R^3)$, hence we have $\psi \in L^6(\R^3)$. Using that $\psi = -\mG_0v^*\phi$ and $\phi = Uv\psi$, we have $\psi = -\mG_0V\psi$
	\begin{align*}
		|\psi(x)|
        \les |\mG_0 V \psi(x)| 
        \les \int_{\R^3} \frac{|V(y)\psi(y)|}{|x-y|^2} \, dy 
        \leq \| V(y)|x-y|^{-2}\|_{L^{\frac{6}{5}}_y} \|\psi\|_6 
        \les 1.
	\end{align*}
	The last inequality holds uniformly in $x\in \R^3$ provided $|V(y)| \les \la y\ra^{-\delta}$ for some $\delta >1/2$ by Lemma~\ref{lem:GolVis}, hence $\psi \in L^\infty$.
\end{proof}

This argument also shows that zero energy resonances do not exist. If $\psi\in L^{2,-\frac12-}$ solves $\mathcal H\psi=0$, the same argument shows we can bootstrap $\psi\in L^2$, hence $\psi$ is an eigenfunction.

We define $S_1$ to be the orthogonal projection onto the kernel of $T$. By standard arguments, $S_1$ is a finite rank projection, see Definition~\ref{def:resonances} above.
\begin{lemma}\label{lem:efn char}
	Assume that $|V(x)|\les \la x\ra^{-2-}$.
	If $\mathcal H\psi = 0$ with $\psi \in L^2 $, then $\phi = Uv\psi \in S_1L^2$, i.e. $T_0\phi = 0$.
\end{lemma}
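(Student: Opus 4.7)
The plan is to compute $T_0\phi$ directly and reduce the claim to showing $\psi + \mathcal{G}_0 V\psi = 0$, which is the distributional statement that the free resolvent at zero energy inverts the eigenfunction equation $\mathcal{H}\psi = 0$. This mirrors the forward direction of Lemma~\ref{lem:S1 char} in reverse: there one began with $\phi \in \Ker(T_0)$ and constructed an eigenfunction; here one begins with an eigenfunction and must recover a kernel element.

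First I will substitute $\phi = Uv\psi$ into $T_0\phi = U\phi + v\mathcal{G}_0 v^*\phi$. Using $U^2 = I$ on the first term and the factorization $v^* U v = V$ on the second, this collapses to
\begin{align*}
T_0\phi = v\psi + v\mathcal{G}_0 V\psi = v\bigl(\psi + \mathcal{G}_0 V\psi\bigr),
\end{align*}
so it suffices to show the expression in parentheses vanishes. I will then apply $D_0 = -i\alpha\cdot\nabla$ to it. Recalling from the proof of Lemma~\ref{lem:S1 char} that $\mathcal{G}_0 = -i\alpha\cdot\nabla\,G_0 = D_0 G_0$, where $G_0 = (-\Delta)^{-1}$, and invoking the anticommutation relation \eqref{eqn:anticomm} which yields $D_0^2 = -\Delta$, I obtain the distributional identity $D_0\mathcal{G}_0 = D_0^2 G_0 = I$. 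Combined with the hypothesis $D_0\psi = -V\psi$, this gives
\begin{align*}
D_0\bigl(\psi + \mathcal{G}_0 V\psi\bigr) = -V\psi + V\psi = 0.
\end{align*}

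To conclude, I invoke the fact that $D_0$ has no $L^2$ kernel: if $f \in L^2(\R^3)$ satisfies $D_0 f = 0$, then $-\Delta f = D_0^2 f = 0$, which forces $f \equiv 0$ by the absence of harmonic $L^2$ functions (equivalently, $\widehat{f}(\xi)$ is supported at the origin). It then remains only to verify that $\psi + \mathcal{G}_0 V\psi \in L^2$. The term $\psi$ is in $L^2$ by hypothesis; for $\mathcal{G}_0 V\psi$ I will dominate $|\mathcal{G}_0(x,y)| \les |x-y|^{-2}$ by the fractional integral $\mathcal{I}_1$ as in Lemma~\ref{lem:S1 char}. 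Using the decay $|V(x)| \les \la x\ra^{-2-}$ together with H\"older's inequality yields $V\psi \in L^{6/5}(\R^3)$, after which the Hardy--Littlewood--Sobolev inequality gives $\mathcal{G}_0 V\psi \in L^2(\R^3)$, completing the argument.

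The main obstacle is the bookkeeping around the distributional identity $D_0\mathcal{G}_0 = I$ and ensuring that the uniqueness statement for $D_0$ applies in the function space where $\psi + \mathcal{G}_0 V\psi$ naturally lives. Once these routine issues are settled the lemma follows immediately, and in combination with Lemma~\ref{lem:S1 char} one obtains the sought-after bijection between $\Ker T_0$ and the zero-energy eigenspace of $\mathcal{H}$, which justifies the characterization asserted in Definition~\ref{def:resonances}.
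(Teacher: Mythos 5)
Your proposal is correct and follows essentially the same route as the paper: both arguments reduce the claim to showing that $\psi+\mG_0 V\psi=\psi+\mG_0v^*\phi$ vanishes, both do so by applying $D_0$ and using $D_0\mG_0=D_0^2G_0=\mathbbm 1$ together with $D_0\psi=-V\psi$, and both conclude from the triviality of the $L^2$ kernel of $D_0$ (via harmonicity of the components). The only cosmetic differences are the order of the two computations and your use of H\"older plus Hardy--Littlewood--Sobolev in place of the paper's weighted-$L^2$ bound for $\mathcal I_1$ to check that $\mG_0V\psi\in L^2$.
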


\begin{proof}
	If $0 = \mathcal H\psi$, then $i\alpha \cdot \nabla \psi = V\psi = v^* \phi$.  To show that $\psi = -\mG_{0}v^*\phi$, noting that $\phi = Uv\psi \in L^2 \subseteq L^1_{loc}$, we have that $v^*\phi \in L^1$. Recalling that $\mG_{0} = -i\alpha \cdot \nabla G_0$, so that $\Delta(-i\alpha \cdot \nabla \mG_0)v^* \phi = -v^* \phi$ in the sense of distributions, we see that
	\begin{align*}
		-i\alpha \cdot \nabla \big[\psi +\mG_{0}v^* \phi \big]
		& = -i\alpha \cdot \nabla \psi + \Delta G_0 v^* \phi 
        = v^*\phi-v^*\phi 
        = 0.
	\end{align*}
	This shows that $\psi + \mG_0v^*\phi$ is annihilated by the gradient, we must have that
	$\psi + \mG_0v^*\phi = (c_1,c_2,c_3,c_4)^T$ is a constant vector. But, $\psi \in L^2$ and the argument in the Lemma above shows that $\mG_{0}v^*\phi \in L^2$. Hence, $(c_1,c_2,c_3,c_4)^T \in L^2$, which necessitates that $c_j = 0$ for each $j$. Thus, $\psi = -\mG_{0}v^*\phi$ as desired. 
	
	Noting that $\phi = Uv\psi = -Uv\mG_{0}v^* \phi$, and recalling that $T_0 = U + v\mG_{0}v^*$, $i\alpha \cdot\nabla \psi = V\psi = v^*\phi$, we see that
    \begin{align*}
        T_0\phi =U\phi+v\mG_0v^*\phi
        =v\psi+v\mG_{0}V\psi
        =v\psi+v\mG_{0} v^* \phi
        =v\psi-v\psi
        =0.
    \end{align*}
	Hence $\phi \in S_1L^2$ as desired.
\end{proof}

Now, we  show that $S_1 v \mG_{1}v^* S_1$ is always  invertible on $S_1L^2$.

\begin{lemma}\label{lem:invertibility}
	We have the identity
	\begin{align}\label{eqn:G0G1 identity}
		\la \mG_{0}v^*\phi, \mG_{0}v^*\phi \ra = -\la v^*\phi, \mG_{1}v^*\phi \ra.
	\end{align}
	Furthermore, the kernel of $S_1v\mG_1v^* S_1$ is trivial.
\end{lemma}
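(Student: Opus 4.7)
The plan is to prove the two claims separately: first the algebraic identity \eqref{eqn:G0G1 identity}, and then use it to deduce the triviality of $\Ker(S_1 v \mG_1 v^* S_1)$.

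For the identity, the key observations are that $\mG_0 = -i\alpha\cdot\nabla G_0 = D_0 G_0$ as operators (with $G_0 = \mG_1 = (-\Delta)^{-1}$ and $D_0 = -i\alpha\cdot\nabla$), that both $D_0$ and $G_0$ are formally self-adjoint, and that the factorization \eqref{dirac_schro_free} gives $D_0^2 = -\Delta$. I would compute, at the operator level,
\begin{equation*}
\mG_0^* \mG_0 = (D_0 G_0)^* (D_0 G_0) = G_0 D_0^2 G_0 = G_0 (-\Delta) G_0 = G_0 = \mG_1,
\end{equation*}
and then pair this against $f = v^*\phi$ to obtain \eqref{eqn:G0G1 identity}. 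Equivalently, one can present the same calculation as an integration by parts that moves one copy of $D_0$ across the inner product and uses $-\Delta G_0 = \mathbbm 1$ distributionally. To make the manipulation rigorous I would approximate $\phi$ by a sequence of $C_c^\infty$ functions and pass to the limit, relying on the fact (already established in the proof of Lemma~\ref{lem:S1 char}) that the decay hypothesis $|V(x)|\les \la x\ra^{-2-}$ places $\mG_0 v^*\phi$ in $L^2\cap L^\infty$ and gives sufficient mapping properties of $\mG_0$ and $\mG_1$ on weighted $L^2$ spaces for the limit to be valid.

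Granted the identity, the triviality of $\Ker(S_1 v \mG_1 v^* S_1)$ is immediate. Suppose $\phi\in S_1 L^2$ satisfies $S_1 v \mG_1 v^* S_1 \phi = 0$. Pairing with $\phi$ and using $S_1\phi = \phi$ gives $\la v^*\phi, \mG_1 v^*\phi\ra = 0$. By \eqref{eqn:G0G1 identity} this forces $\|\mG_0 v^*\phi\|_{L^2}^2 = 0$, hence $\mG_0 v^*\phi = 0$ in $L^2$. Since $\phi\in \Ker(T_0)$ means $U\phi + v\mG_0 v^*\phi = 0$, we conclude $U\phi = 0$, and since $U$ is a diagonal unitary matrix with entries in $\{\pm 1\}$ it is invertible, so $\phi = 0$.

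The only real obstacle is the rigorous justification of the identity in the first step: the kernel of $\mG_0$ has the critical singularity $|x-y|^{-2}$, so $\mG_0 f$ is only barely in $L^2$ and $G_0 f$ is not obviously in $H^1$. The natural fix is the density argument sketched above, using the already-developed weighted bounds on $\mG_0$ and $G_0$ to pass to the limit once the identity is verified for smooth compactly supported approximants. Everything else is essentially bookkeeping around the projection $S_1$ and the algebraic fact that $U$ is invertible.
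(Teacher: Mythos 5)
Your proof is correct and rests on the same key fact as the paper's, namely that \eqref{dirac_schro_free} with $m=\lambda=0$ gives $D_0^2=-\Delta$, so that $\mG_0^*\mG_0=G_0(-\Delta)G_0=\mG_1$; the only real difference is in execution. The paper carries out this computation on the Fourier side, writing the matrix symbol $A(\xi)$ of $-i\alpha\cdot\nabla$ and using $A^*(\xi)A(\xi)=|\xi|^2 I$, and justifies the pairing with $\mG_1=(-\Delta)^{-1}$ by evaluating $R_0(-\lambda^2)$ and passing to the limit by dominated convergence, whereas you work in physical space via formal adjoints and propose a density argument; both routes need the same a priori input ($\mG_0 v^*\phi\in L^2$, from Lemma~\ref{lem:S1 char}), and either is acceptable. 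Two small remarks. First, your computation (and the paper's) actually yields $\la \mG_0 v^*\phi,\mG_0 v^*\phi\ra = +\la v^*\phi,\mG_1 v^*\phi\ra$; the minus sign in \eqref{eqn:G0G1 identity} as stated cannot be right, since both sides are manifestly nonnegative, so you should not claim your identity "obtains" the displayed equation verbatim --- state the correct sign (the conclusion $\Ker(S_1v\mG_1v^*S_1)=\{0\}$ is unaffected either way). Second, your last step concludes $\phi=0$ from $U\phi=0$ by invoking invertibility of $U$; it is cleaner (and avoids any worry about vanishing eigenvalues of $V(x)$) to argue directly from $\phi=-Uv\mG_0v^*\phi$ that $\mG_0v^*\phi=0$ forces $\phi=0$, which is what the paper does via $\phi=Uv\psi$ with $\psi=-\mG_0v^*\phi=0$.
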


\begin{proof}
	We first note that $\mG_{0}=-i\alpha \cdot \nabla G_0$, where $G_0=(-\Delta)^{-1}$, moving to the Fourier side we see:
	\begin{align*}
		\la \mG_0 v^*\phi, \mG_0 v^*\phi\ra = \int_{\R^3} \frac{1}{|\xi|^4} \langle A(\xi)
		\widehat{v^*\phi},  A(\xi)
		\widehat{v^*\phi}\rangle_{\C^4}\, d\xi
	\end{align*}
	where
    \begin{align*}
        A(\xi) =
        \begin{pmatrix} 0 & 0 & \xi_3 & -i\xi_1 + \xi_2 \\
		0 & 0 & i\xi_1 + \xi_2 & -\xi_3\\
		\xi_3 & -i\xi_1 + \xi_2 & 0 & 0\\
		i\xi_1 + \xi_2 & -\xi_3 & 0 & 0 \end{pmatrix}.
    \end{align*}
	We note that $A(\xi)$ is self-adjoint and
	$A^*(\xi)A(\xi)=|\xi|^2 I_{4\times 4}$. From here, we see that
    \begin{align*}
        \la \mG_0 v^*\phi, \mG_0 v^*\phi\ra 
        = \int_{\R^3} \frac{1}{|\xi|^2} \langle 
		\widehat{v^*\phi}, \widehat{v^*\phi}\rangle_{\C^4}\, d\xi.
    \end{align*}
	On the other hand, we recall the Schr\"{o}dinger resolvent $R_0(\lambda^2)$ has Fourier transform $(|\xi|^2 - \lambda^2)^{-1}$.  Evaluating the Schr\"{o}dinger resolvent at $-\lambda^2$ for any $\lambda > 0$   in the resolvent set, then one has
	$\mathcal F(R_0(-\lambda^2)) = (|\xi|^2 + \lambda^2)^{-1}$.  Using the expansion that $R_0(-\lambda^2)=G_0 + O(\lambda^{0+})$ as $\lambda \to 0$. Recalling that $\mG_1 = G_0 I_{4 \times 4}$, we have (again going to the Fourier side)
	\begin{align*}
		\la v^*\phi, \mG_1 v^*\phi \ra 
        = \lim_{\lambda \to 0} \la v^*\phi, R_0(-\lambda^2) I_{4\times 4}v^*\phi \ra 
		= \lim_{\lambda\to 0} \int_{\R^3} \frac{1}{|\xi|^2 + \lambda^2}\langle 
		\widehat{v^*\phi}, \widehat{v^*\phi}\rangle_{\C^4} \, d\xi.
	\end{align*}
	Applying the dominated convergence theorem, we bring the limit inside the integral to see
    \begin{align*}
        \la v^*\phi, \mG_1 v^*\phi \ra 
        = \int_{\R^3} \frac{1}{|\xi|^2} \langle 
		\widehat{v^*\phi}, \widehat{v^*\phi}\rangle_{\C^4}\, d\xi
        = \la \mG_0 v^*\phi, \mG_0 v^*\phi\ra,
    \end{align*}
	as claimed.
	
	Now, take $\phi \in S_1L^2$ in the kernel of $S_1v\mG_1 v^* S_1$. By Lemmas \ref{lem:S1 char} and \ref{lem:efn char} we have $\psi = -\mG_0v^*\phi$ and $\phi = Uv\psi$. Since $S_1v\mG_1v^*S_1\phi = 0$ we have:
	\begin{align*}
		0 = \la \phi, S_1v\mG_1v^*S_1\phi\rangle 
        = \la v^*\phi, \mG_1 v^*\phi\ra 
        = \la \mG_0 v^*\phi, \mG_0 v^*\phi\ra
        = {\|\psi\|}_{L^2}^2.
	\end{align*}
	Hence $\psi = 0$, and $Uv\psi = \phi=0$.
\end{proof}
This shows that $S_1v\mG_1 v^*S_1$ is invertible on $S_1L^2$ as desired.
It follows that
\begin{align*}
    P_0
    = \mG_{0}v S_1 [S_1 v\mG_{1} v^* S_1]^{-1} S_1 v^* \mG_{0} =\mG_{0}v D_2 v^* \mG_{0}
\end{align*}
The proof of this is follows the argument of Lemma~7.10 in \cite{egd}, which proved this in the massive two-dimensional case.  We do not use this projection, so we leave the proof to the interested reader.

\section{High Energy}\label{sec:hi}

Finally, we control the high energy portion of the evolution to complete the proofs of Theorems~\ref{thm:full results} and \ref{thm:weak results}.  Here one cannot use the expansions for $\mR_V^\pm$ developed for the low energy expansions.  Instead, we use the limiting absorption principle, \cite{EGG2}:
\begin{align}\label{eqn:LAP}
	\sup_{\lambda>0} \|\partial_\lambda^k \mR_V^\pm(\lambda)\|_{L^{2,\sigma+k}\to L^{2,-\sigma-k}} \les 1, \qquad \sigma>\f12, \qquad k=0,1,2.
\end{align}
This requires only that $|V(x)|\les \la x\ra^{-1-}$ and that $V$ has continuous entries.  For high energy one has a sharper control on decay of the potential, though it requires continuity of the potential.

Here we cannot use the Lipschitz continuity argument invoked in the low energy regime, but instead proceed via integrating by parts in the Stone's formula, \eqref{eqn:stone}.  We also selectively iterate the resolvent identity by decomposing $\mR_0$ into $\mR_L$ and $\mR_H$ as in the proof of Theorem~\ref{thm:weak decay}, here with an eye on minimizing the growth in the spectral parameter $\lambda$ rather than  to limit the needed decay on $V$.
\begin{prop}\label{prop:hi energy}
	
	Let $|V(x)| \les \la x\ra^{-\delta}$ for some $\delta>1$ with continuous entries.  Then
	$$
	\sup_{x,y\in \R^3} \bigg| \int_{\R} e^{-it\lambda} \widetilde \chi(\lambda) \la \lambda\ra^{-3-} [\mR_V^+-\mR_V^-](\lambda)(x,y)\, d\lambda \bigg| \les 1.
	$$
	If $\delta>2$, then
	$$
		\sup_{x,y\in \R^3} \bigg| \int_{\R} e^{-it\lambda} \widetilde \chi(\lambda) \la \lambda\ra^{-3-} [\mR_V^+-\mR_V^-](\lambda)(x,y)\, d\lambda \bigg| \les \frac{1}{|t|}.
	$$
	Further, if $\delta>3$ we have the weighted bound
	$$
		\bigg| \int_{\R} e^{-it\lambda} \widetilde \chi(\lambda) \la \lambda\ra^{-3-} [\mR_V^+-\mR_V^-](\lambda)(x,y)\, d\lambda \bigg| \les \frac{\la x\ra \la y\ra}{|t|^2}.
	$$
	
\end{prop}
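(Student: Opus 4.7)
Plan: Per the paper's stated approach, the proof proceeds by integration by parts in Stone's formula~\eqref{eqn:stone}, together with a selective iteration of the resolvent identity built around the decomposition $\mR_0^\pm = \mR_L^\pm + \mR_H^\pm$ from \eqref{eqn:RL RH defn}, chosen at high energy to minimize the accrued powers of $\lambda$. First iterate the symmetric form $\mR_V^\pm = \mR_0^\pm - \mR_0^\pm V \mR_V^\pm$ (together with its dual) to write
\begin{align*}
\mR_V^\pm = \sum_{k=0}^{2n-1} (-\mR_0^\pm V)^k \mR_0^\pm + (\mR_0^\pm V)^n \mR_V^\pm (V \mR_0^\pm)^n
\end{align*}
for an $n$ large enough that every iterated outer kernel $[(\mR_0^\pm V)^j](x,\,\cdot\,)v^*$ is locally $L^2$ uniformly in $x$; as in the proof of Theorem~\ref{thm:weak decay}, this may require pairing each $\mR_L^\pm$ with a companion $V\mR_0^\pm$ to smooth out the non-$L^2$-local $|x - y|^{-2}$ singularity. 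The purely free Born terms are handled by extending Lemma~\ref{lem:born series} to the high-energy regime $|\lambda|\gtrsim 1$, using \eqref{eqn:RLRH bds} to track $\lambda$ growth: each derivative on $\mR_H^\pm$ contributes one factor of $|\lambda|$, while $\mR_L^\pm$ and its derivatives contribute none. The factor $\la\lambda\ra^{-3-}$ absorbs up to three such powers across the at most two integrations by parts performed.

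For the sandwiched remainder $(\mR_0^\pm V)^n\mR_V^\pm (V\mR_0^\pm)^n$, writing $V = v^*Uv$, the limiting absorption principle~\eqref{eqn:LAP} yields $\|v\, (\partial_\lambda^k \mR_V^\pm)\, v^*\|_{L^2\to L^2} \lesssim 1$ uniformly in $\lambda$ provided $\delta > 1 + 2k$. A pointwise bound on the kernel follows by Cauchy--Schwarz,
\begin{align*}
\bigl|(\mR_0^\pm V)^n (\partial_\lambda^k \mR_V^\pm) (V\mR_0^\pm)^n(x,y)\bigr|
\lesssim \bigl\|[(\mR_0^\pm V)^n](x,\,\cdot\,)v^*\bigr\|_{L^2}\, \bigl\|v\,(\partial_\lambda^k \mR_V^\pm)\, v^*\bigr\|_{L^2\to L^2}\,\bigl\|v\,[(V\mR_0^\pm)^n](\,\cdot\,,y)\bigr\|_{L^2},
\end{align*}
with the outer $L^2$ norms controlled via Lemmas~\ref{lem:spatial estimates} and~\ref{lem:GolVis}. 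Claim~(i) uses no integration by parts: the integrand is absolutely integrable once $\delta > 1$, because $\la\lambda\ra^{-3-}$ dominates the $\la\lambda\ra$ growth of the free resolvents. Claim~(ii) uses one integration by parts, requiring $\delta > 2$; the derivative is distributed so as not to land on $\mR_V^\pm$ (which would otherwise demand $\delta > 3$), by using additional iterations of the resolvent identity around $\mR_V^\pm$ to shift the derivative onto outer free operators. Claim~(iii) uses two integrations by parts, yielding the $|t|^{-2}$ decay; the weights $\la x\ra\la y\ra$ arise from the spatial factors $|x - z|$ and $|z - y|$ generated when two derivatives act on outer $\mR_0^\pm$'s, absorbed via $|x-z|\lesssim \la x\ra\la z\ra$ with $\la z\ra$ controlled by $V$.

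The main obstacle is the Leibniz term in claim~(iii) where both derivatives act on the innermost $\mR_V^\pm$: a direct application of~\eqref{eqn:LAP} with $k = 2$ would require $v$ to furnish weight $\sigma + 2 > 5/2$, forcing $\delta > 5$ and exceeding the permitted $\delta > 3$. This is overcome by expanding $\partial_\lambda \mR_V^\pm = (\mR_V^\pm)^2$ via the Born series $\mR_V^\pm = \sum_{j<N}(-\mR_0^\pm V)^j\mR_0^\pm + (-\mR_0^\pm V)^N\mR_V^\pm$ on one of its $\mR_V^\pm$ factors, transferring that derivative onto an outer free resolvent so that every application of \eqref{eqn:LAP} stays at $k \leq 1$. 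Combined with the $\mR_L/\mR_H$ split chosen to keep total $\la\lambda\ra$ growth bounded by $\la\lambda\ra^{3-}$ in every term, this renders the integrand compatible with the $\la\lambda\ra^{-3-}$ weight.
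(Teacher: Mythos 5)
Your overall strategy (Born series, the $\mR_L/\mR_H$ splitting of \eqref{eqn:RL RH defn}, the limiting absorption principle \eqref{eqn:LAP}, and up to two integrations by parts) is the same as the paper's, and your treatment of claims (i) and (ii) is essentially sound. However, there are two genuine problems in the details, both tied to $\lambda$-growth at infinity.

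First, the ``main obstacle'' you identify in claim (iii) is illusory, and your workaround for it would fail. Applying \eqref{eqn:LAP} with $k=2$ to the inner $\partial_\lambda^2\mR_V^\pm$ does \emph{not} force $\delta>5$: the estimate is $\|\partial_\lambda^2\mR_V^\pm\|_{L^{2,\sigma+2}\to L^{2,-\sigma-2}}\les 1$ with $\sigma>\tfrac12$ requiring only $\delta>1$, and the weight $\sigma+2$ is supplied by the two copies of the full potential $V$ (not by a single factor $v$), each of which must map $L^{2,-(\frac12+k_4)-}\to L^{2,\frac12+k_3+}$; the worst case $\max|k_j-k_i|=2$ costs exactly $\delta>3$, which is the hypothesis. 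This is precisely how the paper closes the argument. Your substitute --- expanding $\partial_\lambda\mR_V^\pm$ via $(\mR_V^\pm)^2$ and then Born-expanding one factor to push the derivative onto outer free resolvents --- adds free resolvent factors, and by \eqref{eqn:R0 bounds} each free resolvent (equivalently each block $\mR_L V\mR_0$ or $\mR_H$, cf.\ \eqref{eqn:RH wtd L2} and \eqref{eqn:RL k01}) costs a factor of $\la\lambda\ra$ in the weighted-$L^2$ operator norms. The budget is tight: the two outer blocks already spend $\la\lambda\ra^{2}$ against the weight $\la\lambda\ra^{-3-}$, leaving only $\la\lambda\ra^{-1-}$ of integrability, so any additional iteration destroys convergence of the $\lambda$-integral at infinity.

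Second, and for the same reason, your opening move of iterating to order $n$ ``large enough'' that the outer kernels are locally $L^2$ is not viable as stated: each extra $\mR_0^\pm V$ contributes another power of $\la\lambda\ra$, and $\la\lambda\ra^{-3-}$ can absorb at most two such powers plus the losses from integration by parts. The correct resolution, which the paper implements, is to keep $n=1$ (i.e.\ $\mR_0 V\mR_V V\mR_0$) and achieve local $L^2$-ness not by further iteration in bulk but by the selective decomposition \eqref{eqn:hi tail iterate}: the $\mR_H$ piece is already locally $L^2$ by \eqref{eqn:RH wtd L2}, and only where $\mR_L$ appears does one iterate exactly once more to form $\mR_L V\mR_0$, whose composition smooths the $|x-y|^{-2}$ singularity at the cost of a single $\la\lambda\ra$. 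A minor further point: derivatives of $\mR_H^\pm$ contribute factors of $|x-y|$, not of $|\lambda|$ (see \eqref{eq:RL RH}); it is these spatial factors, together with those from $\partial_\lambda^2$ of the outer blocks, that produce the weights $\la x\ra\la y\ra$ in claim (iii).
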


 When $\lambda$ is bounded away from zero, we recall \eqref{eqn:RL RH defn} and note that
By the expansions developed in Lemma~\ref{lem:R0 expansions}, we have (for $k=0,1,2$)
\begin{align}\label{eq:RL RH}
	|\partial_\lambda^k \mR_L^\pm(\lambda)(x,y)|\les \frac{1}{|\lambda|^k|x-y|^{2}}, \qquad 	|\partial_\lambda^k \mR_H^\pm(\lambda)(x,y)|\les |\lambda|\, |x-y|^{k-1}.
\end{align}
In particular, there is only growth in $\lambda$ when $\mR_H^\pm$ appears, while $\mR_L^\pm$ is more singular in the spatial variables which necessitates iteration of the Born series.  
A straight forward computation using Lemma~\ref{lem:GolVis} shows that for $\sigma > k + 1/2$ we have
\begin{align}\label{eqn:RH wtd L2}
	\| \partial_\lambda^k \mR_H^\pm(\lambda)(x,\,\cdot\,)\|_{L^{2,-\sigma}}\les |\lambda| \la x\ra^{k-1}.
\end{align}
In particular, this bound is uniform when $k=0,1$.  While $\mR_L^\pm$ and its derivatives are not locally $L^2$.
Accordingly, we write (omitting the $\pm$ for the moment)
\begin{align}\label{eqn:hi born}
	\mR_V =\mR_0 -\mR_0  V \mR_0 +\mR_0  V\mR_V  V\mR_0 .
\end{align}
The first term is controlled by Theorem~\ref{thm:free} and Corollary~\ref{cor:free weak}. For the second term we need to utilize the difference between the `$+$' and `$-$' resolvents in the Stone's formula, while the third term requires more careful and selective iteration.  We note that the factor of $\la \lambda \ra^{-3-}$ is needed here since each iteration of $\mR_0$ or $\mR_H$ contributes a growth of size $\lambda$ in the spectral parameter.  To ensure the $\lambda$ integral converges at infinity, we must control a growth of size $|\lambda|^2$ and be integrable at infinity.  We prove Proposition~\ref{prop:hi energy} in a series of lemmas.
\begin{lemma}\label{lem:hi born1}
	Let $|V(x)|\les \la x\ra^{-\delta}$ for some $\delta>1$.  Then
	\begin{align*}
		\sup_{x,y\in\R^3} \bigg| \int_{\R} e^{-it\lambda} \widetilde \chi(\lambda) \la \lambda \ra^{-3-} [\mR_0^+  V \mR_0^+-\mR_0^-  V \mR_0^-](\lambda)(x,y)\, d\lambda|\les 1.
	\end{align*}
	If $\delta>2$,
    \begin{align*}
       \sup_{x,y\in\R^3} \bigg| \int_{\R} e^{-it\lambda} \widetilde \chi(\lambda) \la \lambda \ra^{-3-} [\mR_0^+  V \mR_0^+-\mR_0^-  V \mR_0^-](\lambda)(x,y)\, d\lambda\bigg|\les |t|^{-1}.
    \end{align*}
	If $\delta>3$,
	\begin{align*}
		\bigg| \int_{\R} e^{-it\lambda} \widetilde \chi(\lambda) \la \lambda \ra^{-3-} [\mR_0^+  V \mR_0^+-\mR_0^-  V \mR_0^-](\lambda)(x,y)\, d\lambda\bigg|\les \frac{\la x\ra \la y \ra}{|t|^2}.
	\end{align*}
\end{lemma}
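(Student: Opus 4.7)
My strategy begins with the algebraic identity \eqref{eqn:alg identity}, which rewrites the difference as
\begin{align*}
\mR_0^+ V \mR_0^+ - \mR_0^- V \mR_0^- = \mu\, V \mR_0^+ + \mR_0^- V\, \mu,
\end{align*}
where $\mu(\lambda,x,y) = [\mR_0^+-\mR_0^-](\lambda,x,y)$ satisfies the bounds \eqref{eqn:mu0 bounds}. The point is that each summand contains exactly one factor of $\mu$, which gains a power of $|\lambda|$ (and loses a power of $|x-z|$) over a generic $\mR_0^{\pm}$. I treat only the first summand, the second being symmetric after relabeling.

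For the uniform bound, I insert $|\mu(\lambda,x,z)| \les |\lambda|/|x-z|$ from \eqref{eqn:mu0 bounds} and $|\mR_0^\pm(\lambda,z,y)| \les |z-y|^{-2} + |\lambda|/|z-y|$ from \eqref{eqn:R0 bounds} directly. The growth in $\lambda$ is at most $|\lambda|^2$, so the weight $\widetilde\chi(\lambda)\la\lambda\ra^{-3-}$ yields an integrable $\lambda$-integrand. The $z$-integral reduces to kernels of the form $\int \la z\ra^{-\delta}|x-z|^{-1}|z-y|^{-\ell}\,dz$ for $\ell \in \{1,2\}$, which Lemma~\ref{lem:spatial estimates} bounds uniformly in $x,y$ when $\delta > 1$ (invoking the $\epsilon$-splitting trick in the borderline case $k+\ell = 3$). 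For the $|t|^{-1}$ bound, I integrate by parts once. There are no boundary terms at $\lambda = 0$ (where $\widetilde\chi \equiv 0$) nor at $\lambda = \pm\infty$ (where $\la\lambda\ra^{-3-}$ dominates the polynomial growth in the resolvents). Distributing the derivative through Leibniz and using $|\partial_\lambda \mu| \les |\lambda|$ and $|\partial_\lambda\mR_0^\pm| \les |z-y|^{-1} + |\lambda|$, I bound each term pointwise by $|\lambda|^2$ times the same class of spatial kernels as before. The requirement $\delta > 2$ enters through Lemma~\ref{lem:GolVis} when controlling terms with only one spatial singularity, e.g.\ $\int \la z\ra^{-\delta}|x-z|^{-1}\,dz$.

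For the weighted $|t|^{-2}$ bound, I integrate by parts twice. The second derivative, applied either to $\mu$ (yielding $1+|\lambda||x-z|$ via \eqref{eqn:mu0 bounds}) or to $\mR_0^\pm$ (yielding $1+|\lambda||z-y|$ via \eqref{eq:resolv high derivs}), introduces a new spatial factor. Using $|x-z| \les \la x\ra \la z\ra$ and $|z-y| \les \la y\ra \la z\ra$, I pull $\la x\ra\la y\ra$ outside the integral and absorb the stray $\la z\ra$ into $\la z\ra^{-\delta}$. The residual spatial integrals take the form $\int \la z\ra^{-\delta+1}|x-z|^{-j}|z-y|^{-\ell}\,dz$ with $j,\ell \in \{0,1,2\}$ and $j+\ell \leq 2$; when $\delta > 3$ these are controlled uniformly in $x,y$ by Lemmas~\ref{lem:spatial estimates} and \ref{lem:GolVis}. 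The $\lambda$-integrand still decays like $\la\lambda\ra^{-1-}$ and hence converges.

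The main obstacle will be the bookkeeping after two integrations by parts: the product rule produces a proliferation of terms, and each must be routed through the correct combination of spatial-integrability tools. In particular, one must verify that the borderline case $k+\ell = 3$ in Lemma~\ref{lem:spatial estimates} never forces a logarithmic loss (the $\epsilon$-splitting remedy suffices) and that the growth $\la z\ra^{-\delta+1}$ coming from the absorbed weights is consistent with the $\delta > 3$ hypothesis in every term. Everything else is a direct consequence of the a priori pointwise bounds already established for $\mu$ and $\mR_0^\pm$ at high energy.
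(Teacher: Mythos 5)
Your overall strategy is the same as the paper's: exploit the $+/-$ cancellation through \eqref{eqn:alg identity} so that each summand contains one factor of $\mu$, then use the pointwise bounds \eqref{eqn:mu0 bounds}, \eqref{eqn:R0 bounds}, integration by parts, and Lemmas~\ref{lem:spatial estimates} and \ref{lem:GolVis}. However, there is a genuine gap in how you treat the borderline spatial integral. Pairing $|\mu(\lambda,x,z)|\les|\lambda|/|x-z|$ with the $|z-y|^{-2}$ part of $\mR_0^\pm$ produces the kernel $\int_{\R^3}\la z\ra^{-\delta}|x-z|^{-1}|z-y|^{-2}\,dz$, which you claim is bounded uniformly in $x,y$ via the $\epsilon$-splitting remark. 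It is not: that integral diverges logarithmically when $x=y$, and the $\epsilon$-splitting only puts the integral into a form where Lemma~\ref{lem:spatial estimates} applies, at the price of a surviving factor $|x-y|^{-\epsilon}$ near the diagonal. Since all three conclusions are pointwise in $x,y$ (two of them are suprema), this term is not controlled by your argument. The missing idea is to trade a small power of the spatial singularity for a small power of $\lambda$, which the weight $\la\lambda\ra^{-3-}$ can absorb: interpolating the two bounds in \eqref{eqn:mu0 bounds} gives $|\mu|\les|\lambda|^{1+}|x-z|^{-1+}$, so the kernel becomes $|x-z|^{-1+}|z-y|^{-2}$ with total power strictly below $3$, and Lemma~\ref{lem:spatial estimates} then yields a uniform bound. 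The paper does exactly this (and, for the first claim, the equivalent $\mR_L/\mR_H$ decomposition with $|\mR_L^+-\mR_L^-|\les\min(|\lambda|^2,|x-z|^{-2})\les|\lambda|^{0+}|x-z|^{-2+}$); it even flags the step explicitly for the terms where the derivative does not act on a resolvent. The same fix is needed in your second and third bounds wherever the undifferentiated kernel $\mu V\mR_0^\pm$ reappears (e.g.\ when the derivatives fall on $\widetilde\chi(\lambda)\la\lambda\ra^{-3-}$ — note this also contradicts your claim that all residual integrals in the $|t|^{-2}$ case have $j+\ell\le 2$). With that correction inserted, the rest of your bookkeeping (boundary terms, weights $|x-z|\les\la x\ra\la z\ra$, the roles of $\delta>2$ and $\delta>3$) goes through as in the paper.
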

\begin{proof}
	By \eqref{eqn:alg identity} and symmetry, it suffices to bound $[\mR_0^+-\mR_0^-]V\mR_0^+$. For the first claim we write the resolvents on the left as $\mR_0=\mR_L+\mR_H$ and consider cases. For the contribution of $\mR_L$ on the left, we note that the difference of resolvents satisfies both \eqref{eqn:mu0 bounds} as well as the bounds for $\mR_L$ in \eqref{eq:RL RH} by the triangle inequality. As a consequence, we have
	$|[\mR_L^+-\mR_L^-](\lambda)(x,y)|\les \min(|\lambda|^2,|x-y|^{-2})$, which then implies
	$$
	|(\mR_L^+-\mR_L^-)(\lambda)(x,z)V(z)\mR_0^+(\lambda)(z,y)|\les \la z\ra^{-\delta} \bigg(\frac{|\lambda|^{0+}}{|x-z|^{2-}|z-y|}+\frac{|\lambda|^{1+}}{|x-z|^{1-}|z-y|^2}
	\bigg).
	$$
	On the right side we wrote $\mR_0=\mR_L+\mR_H$ and used \eqref{eqn:RLRH bds}. 	
	If $\mR_H$ is on the left, we do not use any cancellation between `$+$' and `$-$' resolvents but note that we may multiply by $|\lambda|\,|x-z|$ as needed to ensure the spatial integrals are bounded uniformly in $x$ and $y$, so
	$$
	|\mR_H^\pm(\lambda)(x,z)V(z)\mR_0^+(\lambda)(z,y)|\les \la z\ra^{-\delta} \bigg(\frac{|\lambda|^{1+}}{|x-z|^{1-}|z-y|^2}+\frac{|\lambda|^{0+}}{|x-z|^{2-}|z-y| }
	\bigg).
	$$
	In any case, by applying Lemma~\ref{lem:spatial estimates} with $\delta>0$, we see that the spatial integrals are bounded uniformly in $x,y$.  Hence, we have
	$$
	\sup_{x,y\in\R^3} \bigg| \int_{\R} e^{-it\lambda} \widetilde \chi(\lambda) \la \lambda \ra^{-3-} [\mR_0^+  V \mR_0^+-\mR_0^-  V \mR_0^-](\lambda)(x,y)\, d\lambda\bigg|\les  \sup_{x,y\in\R^3} \int_{\R} \la \lambda \ra^{-2-}\, d\lambda \les 1.
	$$
	
	We consider the second bound. By \eqref{eqn:mu0 bounds} and \eqref{eqn:R0 bounds}, the support of the cut-off and the decay of $\la \lambda \ra^{-3-}$, there are no boundary terms when we integrate by parts.  We note that differentiation of the cut-off and $\la \lambda\ra^{-3-}$ is comparable to division by $\lambda$. By the triangle inequality we need to bound
	\begin{align*}
        &\frac{1}{|t|} \int_\R \big|\p_\lambda  \big[ \widetilde{\chi}(\lambda) \ang{\lambda}^{-3-} \big(\mR_0^+ - \mR_0^-\big)V\mR_0^-(\lambda)(x,y)\big]\big|d\lambda.
    \end{align*}
	Using the bounds in \eqref{eqn:mu0 bounds} and \eqref{eqn:R0 bounds}, the above integral is dominated by
	\begin{multline*}
        \frac{1}{|t|}\int_\R \int_{\R^3}\widetilde{\chi}(\lambda) \ang{\lambda}^{-1-}\la z\ra^{-\delta} \left(\frac{1}{|x-z|^{1-}|z-y|^2} +\frac{1}{|z - y|^2}+\frac{1}{|x - z|^2}+\frac{1}{|x - z|}\right)dz \, d\lambda \nonumber\\
         \les \frac{1}{|t|}\int_\R   \ang{\lambda}^{-1-}  d\lambda
        \les \frac{1}{|t|} ,\nonumber
	\end{multline*}
    where we require $\delta > 2$ to apply Lemma~\ref{lem:spatial estimates}.  In the case when the derivatives don't act on a resolvent, we interpolate between the two bounds for $\mu(\lambda)$ in \eqref{eqn:mu0 bounds} to bound the difference of resolvents by $|\lambda|^{1+}|x-z|^{-1+}$ to avoid the logarithmic singularity in the spatial integral.
    
    For the final bound we may integrate by parts a second time without  boundary terms.  Ignoring when the derivative acts on the first two terms, whose contribution is bounded by $|t|^{-1}$ using the argument above, we use \eqref{eqn:mu0 bounds} and \eqref{eqn:R0 bounds} to control
    \begin{align*}
        &\frac{1}{t^2}\int_{\R}\bigg| \widetilde{\chi}(\lambda)\ang{\lambda}^{-3-}\p_\lambda^2\big[(\mR_0^+ - \mR_0^-)V\mR_0^-(\lambda)(x,y)\big]\, d\lambda \bigg| \\
        & \qquad \qquad \les \frac{1}{t^2} \int_\R  \ang{\lambda}^{-1-} \int_{\R^3}\la z\ra^{-\delta}\bigg(\frac{\la x\ra \la z\ra+|x-z|}{|x-z|^2}+1+\frac{\la z\ra \la y\ra}{|z-y|} \bigg)
        dz\, d\lambda \les \frac{\ang{x}\ang{y}}{t^2},
    \end{align*}
    where we used $|x-z|\les \la x\ra \la z\ra$ and require $\delta > 3$ to apply Lemma~\ref{lem:spatial estimates}. 
\end{proof}

\begin{lemma}\label{lem:hi tail}
    Let $|V(x)|\les \la x\ra^{-\delta}$ for some $\delta>1$ with continuous entries. Then, 
    \begin{align*}
    	\sup_{x,y\in\R^3} \bigg| \int_{\R} e^{-it\lambda} \widetilde \chi(\lambda) \la \lambda \ra^{-3-} \mR_0 ^\pm V\mR_V^\pm  V\mR_0^\pm(\lambda)(x,y)\, d\lambda\bigg|\les 1.
    \end{align*}
    If $\delta>2$ we have
    \begin{align*}
        \sup_{x,y\in\R^3} \bigg| \int_{\R} e^{-it\lambda} \widetilde \chi(\lambda) \la \lambda \ra^{-3-} \mR_0 ^\pm V\mR_V^\pm  V\mR_0^\pm (\lambda)(x,y)\, d\lambda\bigg|\les |t|^{-1}.
    \end{align*}
	If $\delta>3$ we have
	\begin{align*}
		\bigg| \int_{\R} e^{-it\lambda} \widetilde \chi(\lambda) \la \lambda \ra^{-3-} \mR_0 ^\pm V\mR_V^\pm  V\mR_0^\pm (\lambda)(x,y)\, d\lambda\bigg|\les \frac{\la x\ra \la y \ra}{|t|^2}.
	\end{align*}
\end{lemma}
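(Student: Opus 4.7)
The strategy is to adapt the proof of Lemma~\ref{lem:hi born1}, using the limiting absorption principle \eqref{eqn:LAP} to control the central resolvent $\mR_V^\pm$.  Writing $V = v^* U v$ and viewing the kernel as a double spatial integral, Cauchy--Schwarz yields the pointwise bound
\begin{align*}
|[\mR_0^\pm V \mR_V^\pm V \mR_0^\pm](\lambda)(x,y)|
\les \|v(\,\cdot\,)\mR_0^\pm(\lambda)(\,\cdot\,, x)\|_{L^2} \cdot \|v \mR_V^\pm v^*\|_{L^2 \to L^2} \cdot \|v(\,\cdot\,)\mR_0^\pm(\lambda)(\,\cdot\,, y)\|_{L^2},
\end{align*}
where the middle operator norm is uniformly bounded on the support of $\widetilde\chi$ by \eqref{eqn:LAP} with $k = 0$, provided $\delta > 1$.

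The outer $L^2$ norms are controlled by the decomposition $\mR_0 = \mR_L + \mR_H$ from \eqref{eqn:RL RH defn}.  By \eqref{eqn:RH wtd L2}, $\|v(\,\cdot\,)\mR_H^\pm(\lambda)(\,\cdot\,, x)\|_{L^2} \les |\lambda|$ uniformly in $x$.  The $\mR_L$ contribution fails to be locally $L^2$, but iterating the resolvent identity $\mR_V^\pm = \mR_0^\pm - \mR_0^\pm V \mR_V^\pm$ (once on each side) pairs $\mR_L$ with an additional $V\mR_0$ factor, producing iterated spatial integrals $\mR_L V \mR_0$ that are locally $L^2$ by Lemma~\ref{lem:spatial estimates}.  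This iteration spawns Born series terms of the form $\mR_0 V \cdots V \mR_0$, bounded by a straightforward extension of Lemma~\ref{lem:hi born1} (using \eqref{eqn:alg identity} to exploit the cancellation between `$+$' and `$-$' resolvents), together with a remainder in which $\mR_V$ is surrounded by enough free resolvents that the Cauchy--Schwarz argument above closes.

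For the uniform bound with $\delta > 1$ we estimate directly without integration by parts: the outer $L^2$ norms contribute $|\lambda|^2$ growth, absorbed by $\la\lambda\ra^{-3-}$ to yield a $\la\lambda\ra^{-1-}$ integrand.  For the $|t|^{-1}$ bound with $\delta > 2$ we integrate by parts once; by Leibniz the derivative lands on $\widetilde\chi \la\lambda\ra^{-3-}$, on one of the outer $\mR_0$ factors (where \eqref{eq:RL RH} and \eqref{eqn:RH wtd L2} control the $L^2$ norms at the cost of polynomial $\lambda$-growth absorbed by $\la\lambda\ra^{-3-}$), or on the central $\mR_V$ (controlled by \eqref{eqn:LAP} with $k = 1$).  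For the weighted $|t|^{-2}$ bound with $\delta > 3$ we integrate by parts twice; the spatial weights $\la x\ra$ and $\la y\ra$ on the right-hand side arise from derivatives of $\mR_H$ on the outside via \eqref{eqn:RH wtd L2} at $k = 1, 2$, while the LAP handles derivatives on $\mR_V$.

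The main obstacle is the spatial non-integrability of the outer $\mR_L$ factors: because $|\mR_L(\lambda)(x, z)| \les |x-z|^{-2}$ is not locally $L^2$ in $z$, one cannot apply Cauchy--Schwarz directly.  This is overcome by iterating the resolvent identity to insert a neighboring $V \mR_0$ next to each $\mR_L$, producing an expression that is locally $L^2$ via Lemma~\ref{lem:spatial estimates}; careful distribution of the derivatives in each integration by parts then yields the claimed decay requirements $\delta > 1, 2, 3$.
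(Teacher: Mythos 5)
Your proposal follows essentially the same route as the paper's proof: the central $\mR_V^\pm$ is controlled by the limiting absorption principle \eqref{eqn:LAP}, the outer free resolvents are split as $\mR_L+\mR_H$ with each $\mR_L$ paired to a neighboring $V\mR_0$ via the resolvent identity so that all outer factors become locally $L^2$ (this is exactly the paper's decomposition \eqref{eqn:hi tail iterate} and the bounds \eqref{eqn:RH wtd L2}, \eqref{eqn:RL k01}, \eqref{eqn:RL k2}), and the three estimates follow from zero, one, or two integrations by parts with the weights $\la x\ra\la y\ra$ arising from second derivatives of the outer factors. The only inessential slip is the appeal to the `$+$'/`$-$' cancellation of \eqref{eqn:alg identity} for the spawned Born-type terms --- within this single-sign lemma that cancellation is unavailable, but those terms are bounded directly without it, so the argument still closes.
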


\begin{proof}
	In this case we do not utilize the difference between the `$+$' and `$-$' resolvents, but do selectively iterate. Accordingly, we suppress the $\pm$ notation and write
	\begin{multline}\label{eqn:hi tail iterate}
        \mR_0 V\mR_V V\mR_0 = \mR_H V\mR_V V\mR_H\\
        +	\mR_L V\mR_0 V\mR_V V\mR_H 
        +\mR_H V\mR_V V\mR_0 V \mR_L 
        +\mR_L V\mR_0 V\mR_V V\mR_0 V\mR_L.
	\end{multline}
	Using \eqref{eq:RL RH} and \eqref{eqn:R0 bounds}, with $k_j\in \{0,1,2\}$ and $k_1+k_2=k$, we see that 
   \begin{align*}
        \big|\partial_\lambda^k \big(\mR_L(\lambda)(x,z) V(z)\mR_0(\lambda)(z,y)\big)\big| 
        & \les \int_{\R^3} \frac{\la z\ra^{-\delta}}{|\lambda|^{k_1}|x-z|^2}\bigg(\frac{1}{|z-y|^2}+\frac{|\lambda|}{|z-y|} \bigg)|z-y|^{k_2}\\
        & \les \la \lambda\ra \int_{\R^3} \frac{\la z\ra^{-\delta}}{|x-z|^2|z-y|^2}(1+|z-y|^k)\, dz.
    \end{align*}
	Applying Lemma~\ref{lem:spatial estimates}, if $k=0,1$ we bound by
	$\la \lambda\ra (1+ |x-y|^{-1})$ provided $\delta>1$.  Applying Lemma~\ref{lem:spatial estimates} shows that
	\begin{align}\label{eqn:RL k01}
		\sup_{x\in \R^3} \| \partial_\lambda^k \big(\mR_L V\mR_0(\lambda)(x,\cdot)\|_{L^{2,-\sigma}}\les \la \lambda \ra \qquad k=0,1,
	\end{align}
	provided $\sigma>k + 1/2$ and $\delta>2$. When $k = 2$ we see that
	\begin{align}\label{eqn:RL k2}
		\| \partial_\lambda^2 \big(\mR_L V\mR_0(\lambda)(x,\,\cdot\,)\|_{L^{2,-\sigma}}\les \la \lambda \ra \la x\ra  ,
	\end{align}
	provided $\sigma > 3/2$ and $\delta > 2$.
	
	Using \eqref{eqn:hi tail iterate}, we may express the integral we need to bound as 
    \begin{equation}\label{eqn:hi iterated goal}
        \int_{\R}e^{-it\lambda}\widetilde{\chi}(\lambda)\ang{\lambda}^{-3-}\Gamma_{1,x}(\lambda)V \mR_V(\lambda)V \Gamma_{2,y}(\lambda)(x,y) d\lambda
    \end{equation}
    where \eqref{eqn:RH wtd L2}, \eqref{eqn:RL k01} and \eqref{eqn:RL k2} show that (for $j = 1, 2$)
    \begin{align}\label{eqn:Gamma hi}
    	\sup_{x\in \R^3}\|\partial_\lambda^k \Gamma_{j, x}(\lambda)\|_{L^{2, -\sigma}}\les \la \lambda\ra, \qquad k=0,1, \qquad
    	\|\partial_\lambda^2 \Gamma_{j, x}(\lambda)\|_{L^{2, -\sigma}}\les \la \lambda\ra \la x\ra,
    \end{align}
    provided $\sigma > k + 1/2$ and $\delta>1+k$ for $k = 0, 1, 2$. The bounds hold for $\Gamma_{j,y}$  as well, and remain valid for the adjoint operators since $V$ is self-adjoint and $(\mR_0^\pm)^*=\mR_0^{\mp}$.  
    
    The first claim follows by writing the operators in the integrand in terms of the $L^2$ inner product, \eqref{eqn:Gamma hi}, and the limiting absorption principle \eqref{eqn:LAP}.  Taking $\sigma=\frac{1}{2}+$ and $\delta>1$, we have
    \begin{multline*}
    	|\eqref{eqn:hi iterated goal}|
    	\les \bigg|\int_{\R} \la \lambda\ra^{-3-} \big\la \Gamma_{1,x}^*(\lambda),V\mR_V(\lambda) \Gamma_{2,y}(\lambda) 
    	\big\ra_{L^2}\, d\lambda \bigg| \\
    	\les \int_{\R}  \la \lambda\ra^{-3-} \|\Gamma_{1,x}^*(\lambda)\|_{L^{2,-\sigma}} \|V\mR_V(\lambda) \Gamma_{2,y}(\lambda) \|_{L^{2,\sigma}}\, d\lambda
    	\\
    	\les \int_{\R} \int_{\R^3} \la \lambda\ra^{-3-}\|\Gamma_{1,x}\|_{L^{-\sigma}} \|V\|_{L^{2,-\sigma}\to L^{2,\sigma}}
    	\|\mR_V\|_{L^{2,\sigma}\to L^{2,-\sigma}}
    	\|V\|_{L^{2,-\sigma}\to L^{2,\sigma}}\|\Gamma_{2,y}\|_{L^{-\sigma}}\, d\lambda\\
    	\les \int_{\R} \la \lambda\ra ^{-1-}\, d\lambda \les 1.
    \end{multline*}
	This bound holds uniformly in $x,y\in \R^3$.
    For the second claim, we integrate by parts.  The bounds in \eqref{eqn:Gamma hi} above and the decay of $\la \lambda\ra^{-3-}$ ensure there are no boundary terms.
    \begin{align*}
    	|\eqref{eqn:hi iterated goal}| \les \frac{1}{|t|} \int_{\R} \int_{\R^3} \bigg|
    	\partial_\lambda^{k_1}\widetilde{\chi}(\lambda)\partial_\lambda^{k_2}\ang{\lambda}^{-3-}\partial_\lambda^{k_3}\Gamma_{1,x}(\lambda)V \partial_\lambda^{k_4}\mR_V(\lambda)V \partial_\lambda^{k_5}\Gamma_{2,x}(\lambda)(x,y) d\lambda
    	\bigg| 
    \end{align*}
    where $k_j\in \{0,1\}$ and $\sum k_j=1$.
    The operators in the integrand may be controlled as in the first claim using \eqref{eqn:Gamma hi} and the limiting absorption principle \eqref{eqn:LAP}  as follows
    \begin{multline*}
    	\|\partial_\lambda^{k_3}\Gamma_{1,x}\|_{L^{2,-(\frac{1}{2}+k_3)-}} \|V\|_{L^{2,-(\frac{1}{2}+k_4)-}\to L^{2,\frac{1}{2}+k_3+}}
    	\|\partial_\lambda^{k_4}\mR_V(\lambda)\|_{L^{2,-(\frac12+k_4)-}}\\
    	\|V\|_{L^{2,-(\frac{1}{2}+k_5)-}\to L^{2,\frac{1}{2}+k_4+}}
    	\|\partial_\lambda^{k_5}\Gamma_{2,x}(\lambda)\|_{L^{2,-(\frac{1}{2}+k_5)-}} \les \la \lambda\ra^{2}.
    \end{multline*}
    The decay on $V$ is needed to map between weighted spaces, one needs $\delta>2$ to ensure multiplication by $V$ maps $L^{2,-\frac{1}{2}-}\to L^{2,\frac{3}{2}+}$.  Since only one $k_j$ can be nonzero, this suffices to control the spatial integrals and see that
    $$
    	\sup_{x,y \in \R^3}|\eqref{eqn:hi iterated goal}| \les \frac{1}{|t|} \int_{\R} \int_{\R^3} \la \lambda\ra^{-1-}\, d\lambda\les \frac{1}{|t|}.
    $$  
    The final bound follows similarly by integrating by parts and noting that $\sum k_j=2$.  In this case again using \eqref{eqn:Gamma hi} and \eqref{eqn:LAP} we have
    \begin{multline*}
    	\|\partial_\lambda^{k_3}\Gamma_{1,x}\|_{L^{2,-(\frac{1}{2}+k_3)-}} \|V\|_{L^{2,-(\frac{1}{2}+k_4)-}\to L^{2,\frac{1}{2}+k_3+}}
    	\|\partial_\lambda^{k_4}\mR_V(\lambda)\|_{L^{2,-(\frac12+k_4)-}}\\
    	\|V\|_{L^{2,-(\frac{1}{2}+k_5)-}\to L^{2,\frac{1}{2}+k_4+}}
    	\|\partial_\lambda^{k_5}\Gamma_{2,x}(\lambda)\|_{L^{2,-(\frac{1}{2}+k_5)-}} \les \la \lambda\ra^{2}\la x\ra \la y\ra.
    \end{multline*}
	Here, one needs $\delta>3$ since $\max(|k_j-k_i|)=2$, the mapping between weighted spaces must map between spaces of the form $L^{2,\sigma-}\to L^{2,\sigma+3}$.
    We have
    $$
   		|\eqref{eqn:hi iterated goal}| \les \frac{1}{|t|} \int_{\R} \int_{\R^3} \la \lambda\ra^{-1-}\, d\lambda\les \frac{\la x\ra \la y\ra}{|t|^2}.
    $$
\end{proof}   
Proposition~\ref{prop:hi energy} follows expanding $\mR_V$ as in \eqref{eqn:hi born} and applying Theorem~\ref{thm:free}, Corollary~\ref{cor:free weak}, Lemmas~\ref{lem:hi born1} and \ref{lem:hi tail} to control each term individually.

\section*{Statements and Declarations}
	Declarations of interest: none.  The first author was partially supported by Simons Foundation Grant 511825.  No data was used for the research described in this article.

\end{document}